\newtheorem{theorem}{Theorem}
\theoremstyle{plain}
\newtheorem{question}[theorem]{Question}
\newtheorem{corollary}[theorem]{Corollary}
\newtheorem{definition}[theorem]{Definition}
\newtheorem{lemma}[theorem]{Lemma}
\newtheorem{proposition}[theorem]{Proposition}
\newtheorem{remark}[theorem]{Remark}
\newcommand{\Q}{\mathbb Q}
\newcommand{\C}{\mathbb C}
\newcommand{\N}{\mathbb N}
\newcommand{\R}{\mathbb R}
\newcommand{\De}{\Delta}
\newcommand{\im}{\operatorname{im}}
\newcommand{\Pic}{\operatorname{Pic}}
\newcommand{\id}{\operatorname{id}}
\newcommand{\dashedlongrightarrow}{\xymatrix@1@=15pt{\ar@{-->}[r]&}}
\renewcommand{\longrightarrow}{\xymatrix@1@=15pt{\ar[r]&}}
\renewcommand{\mapsto}{\xymatrix@1@=15pt{\ar@{|->}[r]&}}
\renewcommand{\twoheadrightarrow}{\xymatrix@1@=15pt{\ar@{->>}[r]&}}
\newcommand{\hooklongrightarrow}{\xymatrix@1@=15pt{\ar@{^(->}[r]&}}
\newcommand{\congpf}{\xymatrix@1@=15pt{\ar[r]^-\sim&}}
\renewcommand{\cong}{\simeq}
\begin{document}  
 
\title[On the number and boundedness of log minimal models of general type]{On the number and boundedness of log minimal models of general type}  

\author{Diletta Martinelli}
\address{University of Edinburgh, 
School of Mathematics,
James Clerk Maxwell Building,
The King's Buildings,
Peter Guthrie Tait Road,
Edinburgh
EH9 3FD,
UK.} 
\email{Diletta.Martinelli@ed.ac.uk}

\author{Stefan Schreieder}
\address{Mathematisches Institut, Universit\"at Bonn, Endenicher Allee 60, 53115 Bonn, Germany.} 
\curraddr{Mathematisches Institut, LMU M\"unchen, Theresienstr.\ 39, 80333 M\"unchen, Germany.}
\email{schreieder@math.lmu.de}

\author{Luca Tasin} 
\address{Universit\`a Roma Tre, Dipartimento di Matematica e Fisica, Largo San Leonardo Murialdo I-00146 Roma, Italy} 
\email{ltasin@mat.uniroma3.it}

\date{October 1st, 2018 
}
\subjclass[2010]{primary 14E30, 32Q55; secondary 14D99, 14J30} 


\keywords{varieties of general type, minimal model program, boundedness results, topology of algebraic varieties.}

\maketitle

\begin{abstract}      
We show that the number of marked minimal models of an $n$-dimensional smooth complex projective variety of general type can be bounded in terms of its volume, and, if $n=3$, also in terms of its Betti numbers. 
For an $n$-dimensional projective klt pair $(X,\Delta)$ with $K_X+\Delta$ big, we show more generally that the number of its weak log canonical models can be bounded in terms of the coefficients of $\Delta$ and the volume of $K_X+\Delta$. 
We further show that all $n$-dimensional projective klt pairs $(X,\Delta)$, such that $K_X+\Delta$ is big and nef of fixed volume and such that the coefficients of $\Delta$ are contained in a given DCC set, form a bounded family. 
It follows that in any dimension, minimal models of general type and bounded volume form a bounded family. 
\end{abstract}
%

\section{Introduction}

\subsection{Number of minimal models}
It is well known that starting from dimension three, a minimal model of a smooth complex projective variety $X$ is in general not unique.  
Nevertheless, if $X$ is of general type, even the number of marked minimal models of $X$ is finite \cite{BCHM,KM87}; that is, up to isomorphism, there are only finitely many pairs $(Y,\phi)$, where $\phi \colon X\dashrightarrow Y$ is a birational map and $Y$ is a minimal model, cf.\ Section \ref{subsec:counting} below.  
Such a finiteness statement fails if $X$ is not of general type \cite[Example 6.8]{reid}.  
However, it is conjectured that the number of minimal models $Y$ of $X$ is always finite up to isomorphism; 
this is known for threefolds of positive Kodaira dimension \cite{kawamata}.

In this paper we study the number of marked minimal models in families. 
In particular, 
we show that the corresponding function on any moduli space of complex projective varieties of general type is constructible; that is, it is constant on the strata of some stratification by locally closed subsets. 

\begin{theorem} \label{thm:stratification}  
Let $\pi \colon \mathcal X\longrightarrow B $ be a family of complex projective varieties such that the resolution of each fibre is of general type. 
Then the function $f \colon B\longrightarrow \N$, which associates to $b\in B$ the number of marked minimal models of the fibre $X_b$, is constructible in the Zariski topology of $B$. 
\end{theorem}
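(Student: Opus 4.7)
\emph{Proof plan.} My plan is to reduce to the case of a smooth family of general type, realize the marked minimal models of the fibres as points of a scheme quasi-finite and of finite type over $B$, and then invoke upper semi-continuity of fibre lengths together with Noetherian induction. Concretely, by Noetherian induction on $B$ it suffices, for each irreducible component, to produce a non-empty Zariski open subset on which $f$ is constant; so I assume $B$ is integral. Taking a resolution of $\mathcal X$ and applying generic smoothness, after shrinking $B$ I may assume $\pi$ is smooth projective with each fibre $X_b$ a smooth variety of general type, the reduction being valid because $f(b)$ depends only on the birational equivalence class of $X_b$.

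Next, by Siu's invariance of plurigenera the volume $v := \operatorname{vol}(K_{X_b})$ is independent of $b$. The boundedness theorem announced in the abstract --- that minimal models of general type with bounded volume form a bounded family --- then produces a projective morphism $\mathcal Z \to S$, with $S$ of finite type over $\C$, such that every minimal model of every $X_b$ is isomorphic to some fibre $\mathcal Z_s$. Using this I construct a scheme $M \to B$ of finite type whose fibre $M_b$ parametrizes the marked minimal models of $X_b$: take a suitable locally closed subscheme of the relative Hilbert scheme of $\mathcal X \times_\C \mathcal Z$ over $B \times S$, picking out graph closures of birational maps $X_b \dashrightarrow \mathcal Z_s$ for which $\mathcal Z_s$ is a minimal model of $X_b$, and then quotient by the redundancy coming from isomorphisms $\mathcal Z_s \simeq \mathcal Z_{s'}$. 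By \cite{BCHM} the fibres $M_b$ are finite with $|M_b| = f(b)$.

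Since $M \to B$ is quasi-finite and of finite type, after a further shrinking of $B$ I may assume it is finite; the fibre-length function is then upper semi-continuous on $B$, hence constant on a non-empty open subset, yielding the constructibility of $f$. The main obstacle will be the construction of $M$: one has to uniformly bound the Hilbert polynomials of graph closures of marking maps, and correctly handle the ambiguity coming from isomorphic fibres of $\mathcal Z \to S$. Granted the boundedness of $\mathcal Z \to S$ from the paper's other results, however, this amounts to a (nontrivial but) standard moduli-theoretic construction.
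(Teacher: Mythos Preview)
There is a genuine gap in the construction of $M\to B$. You need a uniform bound on the Hilbert polynomials of the graph closures $\Gamma_\phi\subset X_b\times\mathcal Z_s$ as $b$, $s$, and the marking $\phi$ vary. Such graphs are governed by the indeterminacy locus of $\phi\colon X_b\dashrightarrow\mathcal Z_s$, which depends on the specific MMP run; boundedness of the targets $\mathcal Z_s$ alone does not force these graphs into finitely many components of the relative Hilbert scheme. Calling this a ``standard moduli-theoretic construction'' is too optimistic: producing a finite-type parameter space for the markings is essentially the content of the paper's key technical work. The subsequent ``quotient by isomorphisms $\mathcal Z_s\simeq\mathcal Z_{s'}$'' is likewise not a routine scheme quotient and would itself require stratification and control of equivalence-class sizes. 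Note also that you are invoking Corollary~\ref{cor:minmodbdd}, which in this paper is a main result proved via the same machinery (Propositions~\ref{prop:raydeformation} and~\ref{prop:family}) that underlies Theorem~\ref{thm:stratification}; while not formally circular, your route does not sidestep the hard input.

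The paper's argument is quite different and avoids Hilbert schemes entirely. After passing to the relative canonical model and terminalising in families (Lemma~\ref{lem:terminfamilies}), one has a family of minimal models $\pi\colon\mathcal X\to B$. Proposition~\ref{prop:family} then produces, over a dense open of $B$, finitely many sequences of flops $\phi^k\colon\mathcal X\dashrightarrow\mathcal X^k$ over $B$ such that every sequence of flops on every fibre is the restriction of some $\phi^j\circ(\phi^k)^{-1}$. Since marked minimal models are connected by flops (Theorem~\ref{thm:minmod-connected}), the marked minimal models of $X_b$ are exactly the fibres $X^k_b$, and the count is determined by which of the birational maps $\phi^j\circ(\phi^k)^{-1}$ restrict to isomorphisms over $b$ --- a manifestly constructible condition. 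This is precisely the finite-type parameter space you were aiming for, built directly from flop deformation rather than from an ambient Hilbert scheme.
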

 
In contrast to the above theorem, recall that the Picard number is in general not a constructible function on the base of families of varieties of general type.  

Since  smooth complex projective varieties of general type, of given dimension and bounded volume form a  birationally bounded family \cite{HM06,takayama,tsuji},  Theorem  \ref{thm:stratification} implies the following.
 
\begin{corollary}\label{cor:nrofmodels}
Let $n\in \N$ and $c\in \R_{>0}$.  
Then there is a positive constant $N(c)$, such that for any $n$-dimensional smooth complex projective variety $X$ of general type and volume $\operatorname{vol}(X)\leq c$, the number of marked minimal models of $X$ is at most $N(c)$.  
\end{corollary}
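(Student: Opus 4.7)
The plan is to deduce Corollary \ref{cor:nrofmodels} by combining Theorem \ref{thm:stratification} with the birational boundedness of smooth projective varieties of general type with bounded volume, due to Hacon--McKernan, Takayama, and Tsuji. From these boundedness results one obtains a projective morphism $\pi\colon \mathcal X \to B$ over a scheme $B$ of finite type over $\C$ such that every $n$-dimensional smooth complex projective variety $X$ of general type with $\operatorname{vol}(X)\leq c$ is birational to some fibre $X_b$. After possibly stratifying $B$ and replacing $\mathcal X$ by a simultaneous resolution over each stratum, one can arrange that the resolution of each fibre is of general type, so that Theorem \ref{thm:stratification} applies.

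Next, I would observe that the number of marked minimal models is a birational invariant: given a birational map $\psi\colon X' \dashrightarrow X$, the rule sending $(Y,\phi)$ to $(Y,\phi\circ \psi)$ induces a bijection between isomorphism classes of marked minimal models of $X$ and those of $X'$. Hence any $X$ in our class has the same number of marked minimal models as the fibre $X_b$ to which it is birational.

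Finally, Theorem \ref{thm:stratification} asserts that the function $f\colon B \to \N$ sending $b$ to the number of marked minimal models of $X_b$ is constructible in the Zariski topology, and since $B$ is Noetherian it therefore takes only finitely many values. Setting $N(c):=\max_{b\in B} f(b)$ then yields the required bound. The only point that requires a small argument is arranging the birationally bounding family so that Theorem \ref{thm:stratification} applies (for instance by passing to a stratification and using a simultaneous resolution on each stratum); beyond this, all the substance of the corollary is absorbed entirely into Theorem \ref{thm:stratification} and into the known birational boundedness results, and I anticipate no further obstacle.
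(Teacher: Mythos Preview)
Your proposal is correct and follows essentially the same approach as the paper: invoke the birational boundedness results of Hacon--M$^c$Kernan, Takayama, and Tsuji, apply Theorem~\ref{thm:stratification}, and use that a constructible function on a scheme of finite type is bounded. The paper's proof is terser and leaves implicit two points you spell out, namely the birational invariance of the number of marked minimal models and the verification that the hypotheses of Theorem~\ref{thm:stratification} are satisfied; note that the latter requires no stratification or simultaneous resolution, since Theorem~\ref{thm:stratification} only demands that the resolution of each fibre be of general type, which is automatic once each fibre is birational to a smooth variety of general type.
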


One of our original motivations for Theorem \ref{thm:stratification} (resp.\ Corollary \ref{cor:nrofmodels}) stems from \cite{cascini-lazic}, where Cascini and Lazi\'c proved that the number of log minimal models of  a certain class of three-dimensional log smooth pairs $(X,\Delta)$ of general type can be bounded by a constant that depends only on the homeomorphism type of the pair $(X,\Delta)$.
This has its roots in earlier results that show that the topology governs the birational geometry to some extent, see for instance \cite{CZ14} and \cite{kollar-annals}.
Motivated by their result, Cascini and Lazi\'c \cite{cascini-lazic} conjectured that  the number of minimal models of a smooth complex projective threefold of general type is bounded in terms of the underlying topological space.
As an immediate consequence of Corollary \ref{cor:nrofmodels} and \cite[Theorem 1.2]{cascini-tasin}, we solve this conjecture. 
 
\begin{corollary} \label{cor:betti} 
The number of marked minimal models of a smooth complex projective threefold of general type can be bounded  in terms of its Betti numbers. 
\end{corollary}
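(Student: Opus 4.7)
The plan is to chain together Corollary \ref{cor:nrofmodels} with \cite[Theorem 1.2]{cascini-tasin}. The Cascini--Tasin theorem (in the form I expect it to be stated) provides a topological bound on the volume: for any smooth complex projective threefold $X$ of general type, $\operatorname{vol}(X)$ is bounded above by an explicit function of the Betti numbers $b_i(X)$. Concretely, I would invoke this to produce a constant $c = c(b_1(X), b_2(X), b_3(X))$ with $\operatorname{vol}(X) \leq c$.

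Once this volume bound is in hand, the second step is immediate: apply Corollary \ref{cor:nrofmodels} with $n = 3$ and the constant $c$ from the previous step. That corollary supplies a uniform bound $N(c)$ on the number of marked minimal models of any threefold of general type with volume at most $c$. Composing the two functions yields a bound on the number of marked minimal models of $X$ that depends only on the Betti numbers of $X$, which is exactly the statement.

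The main obstacle is not in this composition, which is essentially a one-line argument, but in the two inputs. All the geometric content sits in Corollary \ref{cor:nrofmodels} (which depends on the constructibility result of Theorem \ref{thm:stratification} together with the birational boundedness of varieties of general type with bounded volume due to Hacon--M\textsuperscript{c}Kernan, Takayama, and Tsuji) and in \cite[Theorem 1.2]{cascini-tasin} (which controls the volume of a threefold of general type by its topology via the minimal model program in dimension three). Granting these two ingredients, the corollary is immediate, and I do not see any nontrivial step beyond the chaining described above.
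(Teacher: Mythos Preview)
Your proposal is correct and matches the paper's own argument exactly: the paper states the corollary as ``an immediate consequence of Corollary~\ref{cor:nrofmodels} and \cite[Theorem 1.2]{cascini-tasin}'' and gives no further proof. There is nothing to add.
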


\subsection{The case of klt pairs} 
A weak log canonical model of a projective klt pair $(X,\Delta)$ is a $(K_X+\Delta)$-non-positive birational contraction 
$
f\colon (X,\Delta)\dashrightarrow (Y,\Gamma=f_\ast \Delta),
$ 
such that $(Y,\Gamma)$ is klt and $K_Y+\Gamma$ is nef.  
If $K_X+\Delta$ is big, then there are only finitely many such models by \cite{BCHM}.
If $\Delta=0$ and $X$ is smooth (or terminal), then any marked minimal model of $X$ is also a weak log canonical model in the above sense.
The converse is not true, because weak log canonical models are not assumed to be $\Q$-factorial; in particular, the number of weak log canonical models is in general larger than the number of marked minimal models.
Theorem \ref{thm:stratification} generalizes to families of klt pairs as follows. 

\begin{theorem} \label{thm:stratification:pairs}
Let $\pi \colon (\mathcal X,\Delta)\longrightarrow B $ be a projective family of klt pairs $(X_b,\Delta_b)$ with $K_{X_b}+\Delta_b$ big. 
Then the function $f \colon B\longrightarrow \N$,  
which associates to $b\in B$ the number of weak log canonical models of $(X_b,\Delta_b)$, 
is constructible in the Zariski topology of $B$.
\end{theorem}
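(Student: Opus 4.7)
The plan is to argue by Noetherian induction on $B$. It suffices to produce, for every non-empty closed integral subscheme $B'\subseteq B$, a non-empty open $U'\subseteq B'$ on which the restriction $f|_{B'}$ is constant, and then to iterate on $B'\setminus U'$. We may thus assume that $B$ is integral with generic point $\eta$. By \cite{BCHM}, the pair $(X_\eta,\Delta_\eta)$ admits only finitely many weak log canonical models $\phi_{i,\eta}\colon (X_\eta,\Delta_\eta)\dashrightarrow (Y_{i,\eta},\Gamma_{i,\eta})$, $i=1,\ldots,N$. The goal is to produce a dense open $U\subseteq B$ on which $f\equiv N$.

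First I would establish the lower bound $f\geq N$ on $U$. After possibly shrinking $B$, each $(Y_{i,\eta},\Gamma_{i,\eta})$ spreads out to a projective family $(\mathcal Y_i,\Gamma_i)\to B$ equipped with a birational map $\Phi_i\colon \mathcal X\dashrightarrow \mathcal Y_i$ over $B$ extending $\phi_{i,\eta}$. Shrinking $B$ further, openness of the klt condition in flat families ensures that each fibre $(Y_{i,b},\Gamma_{i,b})$ is klt; nefness of $K_{\mathcal Y_i/B}+\Gamma_i$ on the generic fibre propagates to an open set of fibres via the cone theorem together with the Noetherian property applied to the finitely many possible $(K+\Gamma)$-negative extremal rays; and one checks that $\Phi_{i,b}$ remains a $(K_{X_b}+\Delta_b)$-non-positive birational contraction. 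A final shrinking ensures that the $Y_{i,b}$ remain pairwise non-isomorphic, yielding $f(b)\geq N$ on $U$.

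The matching upper bound $f\leq N$ is where I expect the main difficulty. My plan is a contradiction and limit argument: if no open subset of $B$ satisfies $f\leq N$, one finds a countable dense collection of points $b_k\in B$ admitting weak log canonical models $(Z_k,\Theta_k)$ of $(X_{b_k},\Delta_{b_k})$ not isomorphic over $X_{b_k}$ to any $(Y_{i,b_k},\Gamma_{i,b_k})$. By the boundedness of klt log pairs with $K+\Theta$ big and nef, of fixed volume, and with coefficients in a DCC set --- one of the main theorems of this paper announced in the abstract --- the pairs $(Z_k,\Theta_k)$ lie in a bounded family. Passing to a subsequence, they fit into a flat family $(\mathcal Z,\Theta)\to T$ over a curve $T\to B$, together with a birational map $\mathcal X\times_B T\dashrightarrow \mathcal Z$, whose generic fibre would give an extra weak log canonical model of $(X_\eta,\Delta_\eta)$, contradicting the count $N$ from \cite{BCHM}.

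The main obstacle is making the limiting step rigorous: one must control the klt property, the nefness of $K_{\mathcal Z/T}+\Theta$, and the structure of the birational map $\mathcal X\times_B T\dashrightarrow \mathcal Z$ under specialization to the closed fibres of $T$. I would handle this by passing to a common log resolution of $\mathcal X\times_B T$ and $\mathcal Z$ and running a relative log MMP for klt pairs with big boundary over $T$, ensuring the output remains a family of weak log canonical models and hence identifying the generic fibre as a genuine $(N{+}1)$-st weak log canonical model of $(X_\eta,\Delta_\eta)$. Combining the two bounds then gives $f\equiv N$ on $U$ and closes the Noetherian induction.
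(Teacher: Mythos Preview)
Your overall Noetherian-induction framing and the lower bound (spreading out the finitely many weak log canonical models of the generic fibre) are reasonable, though the nefness step would be more cleanly handled via semiampleness and the relative base point free theorem than via the vague ``cone theorem plus Noetherian property'' argument you sketch.

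The genuine gap is in the upper bound. You want to pass from ``for a dense set of $b_k$ there is an extra model $(Z_k,\Theta_k)$'' to ``there is a curve $T\to B$, dominant, carrying a flat family $(\mathcal Z,\Theta)\to T$ together with a birational map $\mathcal X\times_B T\dashrightarrow \mathcal Z$ restricting to the given extra models''. Boundedness (Theorem~\ref{thm:boundednesspairs}) only tells you that the abstract pairs $(Z_k,\Theta_k)$ occur as fibres of some fixed family over an unrelated base $S$; it gives you no compatibility of the birational maps $X_{b_k}\dashrightarrow Z_k$ as $b_k$ varies, and no way to thread a curve through the correct points of $B\times S$ (or rather through the relevant component of the relative Hilbert scheme of graphs). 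The ``main obstacle'' you identify, and your proposed fix via a common log resolution and a relative MMP, addresses a different issue (behaviour under specialisation once the family $\mathcal Z\to T$ exists), not the construction of that family. Without this, the contradiction does not close.

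Note also that the boundedness result you invoke is, in this paper, proved using exactly the same technical core (Propositions~\ref{prop:raydeformation} and~\ref{prop:family}, terminalisations in families) that underlies the proof of the present theorem; so even if your argument could be repaired, it would not bypass the main work. The paper's proof proceeds differently and constructively: it passes to the relative log canonical model, applies Lemma~\ref{lem:terminfamilies} to get a family of terminalisations, then uses Proposition~\ref{prop:family} to produce \emph{all} sequences of flops of the fibres simultaneously in finitely many families $(\mathcal X^k,\Delta^k)$, and finally contracts the finitely many $(K+\Delta)$-trivial extremal faces. Lemma~\ref{lem:wlcm} then characterises exactly which of the resulting families $(\mathcal X^{k,j},\Delta^{k,j})$ give weak log canonical models of the fibres of $\pi$, and constructibility follows because the loci where two such families have isomorphic fibres are constructible. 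This yields both bounds at once, without any limiting or compactness argument.
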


Theorem \ref{thm:stratification:pairs} remains true if we count only those weak log canonical models that are $\Q$-factorial, see Remark \ref{rem:stratification:Q-factorial}. 
Using the log birational boundedness result from \cite{HMX14} (cf.\ Theorem \ref{thm:boundeed:HMX} below), 
Theorem \ref{thm:stratification:pairs} implies the 
following generalization of Corollary \ref{cor:nrofmodels}.

\begin{corollary} \label{cor:nrofmodels:pairs}
The number of weak log canonical models of a projective klt pair $(X,\Delta)$ with $K_X+\Delta$ big,
is bounded in terms of the dimension of $X$, the coefficients of $\Delta$ and the volume of $K_X+\Delta$. 
\end{corollary}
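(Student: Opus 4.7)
The plan is to combine the boundedness of minimal models of log general type (one of the main results of this paper, stated in the abstract) with Theorem~\ref{thm:stratification:pairs}.

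Fix $n \in \N$, a DCC set $I \subset [0,1]$ and $V > 0$, and let $\mathcal M = \mathcal M_{n,I,V}$ denote the class of $n$-dimensional projective klt pairs $(Y,\Gamma)$ with $K_Y+\Gamma$ big and nef, coefficients of $\Gamma$ in $I$, and $\operatorname{vol}(K_Y+\Gamma) \leq V$. By the above-mentioned boundedness result, $\mathcal M$ is a bounded family, so there exists a projective morphism $\pi \colon (\mathcal Y,\mathcal G) \to S$ with $S$ a scheme of finite type whose fibers are klt pairs of general type, such that every pair in $\mathcal M$ is isomorphic to some fiber $(Y_s,G_s)$.

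Now let $(X,\Delta)$ be a projective klt pair with $\dim X = n$, $K_X+\Delta$ big, $\operatorname{vol}(K_X+\Delta) \leq V$, and coefficients of $\Delta$ in $I$. Every weak log canonical model $(Y,\Gamma)$ of $(X,\Delta)$ via a contraction $f$ lies in $\mathcal M$: by definition it is klt with $K_Y+\Gamma$ nef and big, the coefficients of $\Gamma = f_\ast\Delta$ lie in $I$, and $\operatorname{vol}(K_Y+\Gamma) = \operatorname{vol}(K_X+\Delta)$ because $f$ is $(K_X+\Delta)$-non-positive. Moreover, for any fixed weak lc model $(Y_0,\Gamma_0)$ of $(X,\Delta)$, the weak lc models of $(X,\Delta)$ and those of $(Y_0,\Gamma_0)$ are in natural bijection: both pairs have isomorphic log canonical rings, and the set of weak lc models is encoded by this ring via the standard correspondence with small \Q-factorial modifications of the log canonical model.

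Combining these observations, the number of weak lc models of $(X,\Delta)$ equals $f(s_0)$, where $f(s) := \#\{\text{weak lc models of } (Y_s,G_s)\}$ and $s_0 \in S$ is the point corresponding to the chosen $(Y_0,\Gamma_0) \in \mathcal M$. Applying Theorem~\ref{thm:stratification:pairs} to the family $(\mathcal Y,\mathcal G) \to S$ shows that $f$ is constructible in the Zariski topology of $S$; as $S$ is of finite type, $f$ attains only finitely many values and its maximum yields the desired bound $N(n,I,V)$. The main technical step I expect is the identification of weak lc models of $(X,\Delta)$ with those of its weak lc model $(Y_0,\Gamma_0)$: this is essentially formal given the equality of log canonical rings, but writing it down rigorously requires checking that a $(K_X+\Delta)$-non-positive birational contraction to a third weak lc model factors through $(Y_0,\Gamma_0)$ as a $(K_{Y_0}+\Gamma_0)$-non-positive contraction, which one verifies by passing to a common resolution and comparing discrepancies.
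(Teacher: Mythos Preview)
Your overall strategy---bound the relevant pairs, parametrize them by a finite-type family, apply Theorem~\ref{thm:stratification:pairs}---is the same as the paper's. But the step where you pass from $(X,\Delta)$ to a reference pair in the bounded family has a genuine gap.

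You claim that for any weak log canonical model $(Y_0,\Gamma_0)$ of $(X,\Delta)$, the weak lc models of $(X,\Delta)$ and of $(Y_0,\Gamma_0)$ are in natural bijection. This is false. A weak lc model of $(X,\Delta)$ is a $(K_X+\Delta)$-non-positive \emph{contraction}; if $f_0\colon X\dashrightarrow Y_0$ contracts a divisor $E$ that some other weak lc model $f\colon X\dashrightarrow Y$ does not contract, then $Y_0\dashrightarrow Y$ must extract $E$ and is therefore not a contraction, so $(Y,\Gamma)$ is \emph{not} a weak lc model of $(Y_0,\Gamma_0)$. Concretely, take $X$ to be the minimal resolution of a canonical surface $X^c$ with a single $A_1$ singularity and $\Delta=0$: then $(X,0)$ has two weak lc models ($X$ and $X^c$), while $(X^c,0)$ has only one. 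Your justification via ``the log canonical ring encodes the weak lc models as small $\Q$-factorial modifications of the log canonical model'' is also incorrect: weak lc models can extract divisors of non-positive discrepancy over $X^c$ (those already present on $X$), so they are not small over $X^c$ in general; which such divisors are permitted depends on $(X,\Delta)$, not just on $X^c$ (this is exactly the content of Lemma~\ref{lem:wlcm}).

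The paper fixes this by comparing $(X,\Delta)$ not to one of its own weak lc models but to a \emph{terminalisation} $(X^t,\Delta^t)$ of the log canonical model $(X^c,\Delta^c)$. Using Lemma~\ref{lem:flops+contr}, every weak lc model of $(X,\Delta)$ is also a weak lc model of $(X^t,\Delta^t)$, giving an injection in the direction you need. The subtlety is that $(X^t,\Delta^t)$ is typically \emph{not} in your family $\mathcal M$: its boundary coefficients are discrepancies and need not lie in $I$. The paper therefore does not invoke Theorem~\ref{thm:boundednesspairs} directly but builds, inside its proof, a finite-type family of terminal pairs parametrizing terminalisations of the bounded family of log canonical models (via Theorem~\ref{thm:boundeed:HMX} and Lemma~\ref{lem:terminfamilies}), and then applies Theorem~\ref{thm:stratification:pairs} to that family.
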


\subsection{Boundedness of log minimal models of general type} \label{subsec:bdd} 
Let $\mathfrak F$ be a collection of projective pairs $(X,\Delta)$.
We recall that the pairs $(X,\Delta)\in \mathfrak F$ form a bounded family (or that $\mathfrak F$ is bounded), if there is a complex projective family of pairs $\pi \colon (\mathcal X,\De) \longrightarrow B$ over a scheme $B$ of finite type, whose fibres belong to $\mathfrak F$ and such that any element of $\mathfrak F$ is isomorphic to some fibre of $\pi$.   
We call $\pi$ a parametrizing family of $\mathfrak F$.

Hacon, M$^c$Kernan and Xu proved the boundedness of the set $\mathfrak F_{slc}$ of all semi log canonical pairs $(X,D)$, where $X$ has given dimension, the coefficients of $D$ belong to a DCC set $I\subset [0,1]$, $K_X+D$ is ample and $(K_X+D)^n$ is fixed, see \cite{HMX-Aut,HMX-ACC,HMX14}.
Here the DCC condition on $I$ means that any non-increasing sequence in $I$ becomes stationary at some point; this holds in particular for any finite set $I\subset [0,1]$.

As a consequence of our study of (log) minimal models in families, we obtain the following partial generalisation of that result.
While we require the pair $(X,D)$ to be klt, we relax the condition on $K_X+D$ to be only big and nef; our result relies on the boundedness theorem from \cite{HMX14}. The two-dimensional case goes back to Alexeev \cite{Alexeev}.

\begin{theorem}\label{thm:boundednesspairs}  
Let $n$ be a natural number, $c$ a positive rational number and $I \subset [0,1) \cap \Q$ be a 
DCC set. 
Consider the set $\mathfrak F$ of all klt pairs $(X,D)$ such that
\begin{enumerate}
\item $X$ is a projective variety of dimension $n$,
\item the coefficients of $D$ belong to $I$,
\item $K_X+D$ is big and nef,  
\item $(K_X+D)^n = c$. 
\end{enumerate} 
Then the pairs $(X,D)\in \mathfrak F$ form a bounded family. 
Moreover, the parametrizing family can be chosen as a disjoint union $ (\mathcal X^{Q},D^{Q}) \sqcup (\mathcal X^{\neq Q},D^{\neq Q})$, where $(\mathcal X^{Q},D^{Q})$ is $\Q$-factorial and parametrizes exactly the $\Q$-factorial members of $\mathfrak F$. 
\end{theorem}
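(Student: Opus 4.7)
The plan is to reduce the boundedness of $\mathfrak F$ to the boundedness of its ample models (provided by \cite{HMX14}) and then to control the resulting finite-to-one map by combining Corollary~\ref{cor:nrofmodels:pairs} with Theorem~\ref{thm:stratification:pairs}. Given $(X,D)\in\mathfrak F$, the class $K_X+D$ is big and nef on a klt pair, hence semi-ample by BCHM; let $\phi\colon X\to X_c$ be its ample model, so that $K_X+D=\phi^*(K_{X_c}+D_c)$ with $D_c=\phi_*D$. Then $(X_c,D_c)$ is klt of dimension $n$, with $K_{X_c}+D_c$ ample, coefficients in $I$, and volume $c$. By Theorem~\ref{thm:boundeed:HMX} (the boundedness theorem of \cite{HMX14}), the collection $\mathfrak F_c$ of these ample models is a bounded family; fix a parametrizing family $\pi_c\colon(\mathcal X_c,\mathcal D_c)\to B_c$.

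To bound the fibre of the map $\mathfrak F\to\mathfrak F_c$ over a fixed $(X_c,D_c)$, I take a $\Q$-factorial terminalisation $(\widetilde X,\widetilde D)\to(X_c,D_c)$ extracting every divisorial valuation of discrepancy in $(-1,0]$. By the ACC for log discrepancies \cite{HMX14}, the coefficients of $\widetilde D$ lie in a DCC subset $I'\subset[0,1)\cap\Q$ depending only on $n$ and $I$, and $K_{\widetilde X}+\widetilde D$ is big and nef of volume $c$. Any $(X,D)\in\mathfrak F$ whose ample model is $(X_c,D_c)$ admits a $(K+\widetilde D)$-trivial birational contraction $\widetilde X\dashrightarrow X$, obtained by contracting those exceptional divisors of $\widetilde X\to X_c$ which do not appear in $D$; this identifies $(X,D)$ with a weak log canonical model of $(\widetilde X,\widetilde D)$. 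Corollary~\ref{cor:nrofmodels:pairs} then bounds the number of such models uniformly by a constant $N=N(n,I,c)$.

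Next, after a finite \'etale base change of $B_c$ over which a simultaneous terminalisation $(\widetilde{\mathcal X},\widetilde{\mathcal D})\to(\mathcal X_c,\mathcal D_c)$ can be constructed, Theorem~\ref{thm:stratification:pairs} applied to this terminal family yields a finite stratification of $B_c$ on which the number of weak log canonical models of each fibre is constant. Over each stratum, a relative MMP with scaling run on $(\widetilde{\mathcal X},\widetilde{\mathcal D})$ produces a finite collection of families whose fibres realise all the elements of $\mathfrak F$ mapping to the stratum; the disjoint union of these families over all strata gives the desired parametrizing family for $\mathfrak F$. Since $\Q$-factoriality is a constructible (in fact locally closed) condition in flat families of klt pairs, one final stratification separates the $\Q$-factorial fibres from the rest, giving the decomposition $(\mathcal X^{Q},\Delta^{Q})\sqcup(\mathcal X^{\neq Q},\Delta^{\neq Q})$ in the statement.

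The main obstacle will be this last step: the pointwise finiteness of weak log canonical models must be upgraded to a genuine family of such models over each stratum. This requires a relative MMP with scaling in families, for which the rationality and DCC conditions on the coefficients ($I\subset[0,1)\cap\Q$) are essential to guarantee termination of the relevant flips.
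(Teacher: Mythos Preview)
Your overall strategy---pass to the bounded family of ample models via Theorem~\ref{thm:boundeed:HMX}, terminalise in families, and then recover every element of $\mathfrak F$ as a weak log canonical model of some fibre of the terminalised family---matches the paper's. The detour through Corollary~\ref{cor:nrofmodels:pairs} is unnecessary (the constructibility from Theorem~\ref{thm:stratification:pairs} already forces the number to be bounded on each finite-type base), and the supporting claim that ``ACC for log discrepancies'' puts the coefficients of the terminalisation into a fixed DCC set is not something you can quote from \cite{HMX14}; fortunately nothing later actually uses it.

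The genuine gap is the step you yourself flag. A relative MMP with scaling on $(\widetilde{\mathcal X},\widetilde{\mathcal D})$ over a stratum produces a sequence of birational models of the \emph{total space}; it does not by itself tell you that the restriction to each fibre is a step of a fibrewise MMP, nor that every weak log canonical model of every fibre arises this way. Knowing from Theorem~\ref{thm:stratification:pairs} that the number of fibrewise models is constant does not bridge this: the statement of that theorem gives only a number, not a family, and a relative $(K+\widetilde{\mathcal D})$-trivial extremal ray can in principle restrict to a non-extremal class on a special fibre. What is needed is exactly the content of Proposition~\ref{prop:raydeformation} and Proposition~\ref{prop:family}: after suitable base change and shrinking, every $(K+\Delta)$-trivial extremal ray of every fibre is the restriction of a relative extremal ray, every flop of every fibre extends to a flop over the base, and (Corollary~\ref{cor:prop:raydef:F}) the contraction of each relative $(K+\Delta)$-trivial face restricts fibrewise to the contraction of the corresponding face. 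The paper's proof of Theorem~\ref{thm:boundednesspairs} is just this: terminalise via Lemma~\ref{lem:terminfamilies}, run Proposition~\ref{prop:family} to obtain all flopped families $(\mathcal X_i^k,\Delta_i^k)$, then contract all relative trivial faces to get the $(\mathcal X_i^{k,j},\Delta_i^{k,j})$; Lemma~\ref{lem:flops+contr} guarantees this exhausts $\mathfrak F$.

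Your treatment of the $\Q$-factorial decomposition has the same problem. The assertion that $\Q$-factoriality is constructible (or locally closed) in an arbitrary flat family of klt pairs is not something you can cite; openness in \cite{KM92} is for terminal varieties. The paper instead uses the specific shape of the families $\mathcal X_i^{k,j}$ as contractions of $\Q$-factorial terminal families: by Lemma~\ref{lem:Q-factorial}, $\Q$-factoriality of such a contraction is governed by an intersection matrix of flat exceptional divisors against flat families of curves, and item~(\ref{item:cor:2}) of Corollary~\ref{cor:prop:raydef:F} then shows it holds for one fibre iff it holds for all. You would need an argument of this kind rather than a general constructibility claim.
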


By Theorem \ref{thm:boundednesspairs}, the subset $\mathfrak F^{Q}\subset \mathfrak F$ of $\Q$-factorial pairs is also bounded.
We do not deduce this directly from the boundedness of $\mathfrak F$, but use some refined arguments to reduce to the fact that $\Q$-factoriality is an open condition for families of terminal varieties \cite{KM92}. 

Theorem \ref{thm:boundednesspairs} has several interesting corollaries, which we collect next. 

As it is for instance explained by Hacon and Kov\'acs in \cite[p.\ 9]{HK10}, the boundedness result for canonical models of surfaces implies that  in dimension two, minimal models  of general type and bounded volume form also a bounded family.
That argument fails in higher dimensions, because it uses in an essential way that minimal models of surfaces are unique.   
The following corollary settles the boundedness question for minimal models of general type in arbitrary dimensions. 

\begin{corollary}   \label{cor:minmodbdd}
Minimal models of general type, of given dimension and bounded volume form a bounded family. 
\end{corollary}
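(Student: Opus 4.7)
The plan is to reduce the bounded-volume situation to the fixed-volume case handled by Theorem~\ref{thm:boundednesspairs}, and then to invoke its $\Q$-factorial refinement. First I would fix $n \in \N$ and $c \in \R_{>0}$ and let $\mathfrak M_{n,c}$ denote the set of isomorphism classes of $n$-dimensional minimal models of general type with $\operatorname{vol}(X) \leq c$. By definition each such $X$ is $\Q$-factorial with terminal (hence klt) singularities and $K_X$ is nef; since $X$ is of general type, $K_X$ is in addition big, and $\operatorname{vol}(X) = K_X^n$. Setting $\Delta = 0$ and $I = \{0\} \subset [0,1)\cap \Q$, every $X \in \mathfrak M_{n,c}$ satisfies hypotheses (1)--(3) of Theorem~\ref{thm:boundednesspairs}.

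The crucial additional input is that the set of volumes
\[
V_{n,c} = \{ K_X^n : X \in \mathfrak M_{n,c} \} \subset (0,c]
\]
is in fact finite. This is a consequence of the DCC for volumes of klt varieties of general type in fixed dimension (Hacon--McKernan--Xu): any DCC set that is bounded from above is necessarily finite. Write $V_{n,c} = \{c_1, \ldots, c_k\}$.

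For each $c_i$ I would apply Theorem~\ref{thm:boundednesspairs} with dimension $n$, coefficient set $I = \{0\}$, and prescribed volume $c_i$, obtaining a parametrizing family that decomposes as $(\mathcal X^{Q}_i, 0) \sqcup (\mathcal X^{\neq Q}_i, 0)$, where the $\Q$-factorial component parametrizes exactly the $\Q$-factorial members. Since every minimal model is $\Q$-factorial by definition, each element of $\mathfrak M_{n,c}$ of volume $c_i$ is realized as a fibre of $(\mathcal X^{Q}_i, 0)$. The finite disjoint union $\bigsqcup_{i=1}^k (\mathcal X^{Q}_i, 0)$ is therefore a family of finite type containing all of $\mathfrak M_{n,c}$; restricting its base to the constructible locus where the fibre has terminal singularities yields a parametrizing family for $\mathfrak M_{n,c}$ itself, proving boundedness.

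The main obstacle is the DCC for volumes: with it, the rest is a clean finite reduction; without it, one would face a continuous range of volumes below $c$, and Theorem~\ref{thm:boundednesspairs} --- which requires a fixed value of $(K_X+D)^n$ --- could not be applied directly. A secondary, but essentially routine, point is extracting the terminal sublocus from the klt $\Q$-factorial parametrizing scheme; this uses standard constructibility of the terminal condition in families and is harmless for the boundedness conclusion.
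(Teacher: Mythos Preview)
Your proof is correct and follows essentially the same route as the paper: reduce to finitely many volume values, apply Theorem~\ref{thm:boundednesspairs} at each one, and then pass to the terminal (and $\Q$-factorial) sublocus. The only cosmetic difference is that the paper obtains finiteness of the relevant volumes from the discreteness result of Hacon--M\textsuperscript{c}Kernan, Takayama and Tsuji rather than from the DCC theorem of Hacon--M\textsuperscript{c}Kernan--Xu, and it invokes openness (rather than mere constructibility) of terminality in flat families.
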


Using a suitable Whitney stratification \cite{verdier}, Theorem \ref{thm:boundednesspairs} implies the following finiteness statement for the topological types of minimal models of general type.

\begin{corollary} \label{cor:finitetopspace}    
In the notation of Theorem \ref{thm:boundednesspairs},
 the set of pairs $(X,D)\in \mathfrak F$ is finite up to homeomorphisms of the underlying complex analytic pair $(X^{an},D^{an})$.

Similarly, minimal models of general type, of given dimension and bounded volume are finite up to homeomorphisms of the underlying complex analytic spaces. 
\end{corollary}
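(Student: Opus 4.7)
The plan is to apply Theorem \ref{thm:boundednesspairs} (respectively Corollary \ref{cor:minmodbdd}) to realize $\mathfrak F$ as the set of fibres of a parametrizing family over a base of finite type, and then to invoke a Whitney--Verdier stratification argument to deduce that only finitely many homeomorphism types of fibres can occur.

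First I would fix a parametrizing family $\pi\colon (\mathcal X,\Delta)\longrightarrow B$ as furnished by Theorem \ref{thm:boundednesspairs}: here $B$ is a $\C$-scheme of finite type, $\pi$ is projective, and every pair in $\mathfrak F$ arises, up to isomorphism, as some fibre $(X_b,\Delta_b)$. Passing to the underlying complex analytic spaces, $\pi^{an}\colon \mathcal X^{an}\longrightarrow B^{an}$ becomes a proper holomorphic map, and $\Delta^{an}\subset \mathcal X^{an}$ is a closed analytic subspace.

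The key step is to apply Verdier's theorem \cite{verdier} on the existence of Whitney stratifications: I would choose Whitney stratifications of $\mathcal X^{an}$ and of $B^{an}$, compatible with $\Delta^{an}\subset \mathcal X^{an}$, such that $\pi^{an}$ maps each stratum of $\mathcal X^{an}$ submersively onto a stratum of $B^{an}$. By Thom's first isotopy lemma, the restriction of $\pi^{an}$ to the preimage of any stratum $S\subset B^{an}$ is then topologically locally trivial as a map of pairs; in particular all fibres $(X_b^{an},\Delta_b^{an})$ with $b\in S$ are homeomorphic as pairs. Since $B$ is of finite type, any such Whitney stratification has only finitely many strata, so only finitely many homeomorphism types of pairs $(X^{an},D^{an})$ occur as $(X,D)$ ranges over $\mathfrak F$.

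For the second assertion, I would apply Corollary \ref{cor:minmodbdd} in place of Theorem \ref{thm:boundednesspairs} and rerun exactly the same argument, omitting the boundary divisor. The main obstacle I anticipate is not geometric but rather one of bookkeeping: one must verify that Verdier's stratification theorem is available in precisely the form needed, namely a relative version for proper maps between possibly singular, non-reduced complex analytic spaces, and compatible with a prescribed closed analytic subset. Once this is in hand, the finiteness is immediate from the finiteness of a Whitney stratification of a finite-type base.
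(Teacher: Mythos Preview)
Your proposal is correct and follows essentially the same approach as the paper: take the parametrizing family from Theorem~\ref{thm:boundednesspairs} (resp.\ Corollary~\ref{cor:minmodbdd}), apply Verdier's Whitney stratification theorem so that $\Delta$ is a union of strata, and use topological local triviality over each stratum (Thom's first isotopy lemma, stated as \cite[Th\'eor\`emes 3.3 and 4.14]{verdier}) to conclude finiteness. Your anticipated ``bookkeeping obstacle'' is handled precisely by citing \cite[Th\'eor\`eme 2.2]{verdier} for the existence of the stratification and \cite[Th\'eor\`emes 3.3 and 4.14]{verdier} for the local triviality.
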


By Corollary \ref{cor:finitetopspace}, all topological invariants (e.g.\ the Betti numbers) of a minimal model of general type can be bounded in terms of its volume and dimension.

Corollary \ref{cor:minmodbdd} shows furthermore that the deformation type of a complex projective manifold $X$ with $K_X$ nef and $c_1^n(X)\neq 0$ is determined up to finite ambiguity by the Chern number $c_1^n(X)$:

\begin{corollary} \label{cor:finitedeftype}
Let $n\in \N$ and $c\in \R_{>0}$.  
The set of deformation types of $n$-dimensional complex projective manifolds $X$ with $K_X$ nef and $0<K_X^n\leq c$ is finite.

In particular, any Chern or Hodge number of an $n$-dimensional complex projective manifold $X$ with $K_X$ nef and $c_1^n(X)\neq 0$ can be bounded in terms of $c_1^n(X)$.  
\end{corollary}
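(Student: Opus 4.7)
The strategy is to deduce both assertions directly from the boundedness result Corollary \ref{cor:minmodbdd}, together with the openness of smoothness in families and the deformation invariance of Chern and Hodge numbers.

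First, I would apply Corollary \ref{cor:minmodbdd} to the set of all minimal models $X$ of general type with $\dim X=n$ and $K_X^n\leq c$. This yields a parametrizing family $\pi\colon \mathcal X\longrightarrow B$ over a scheme $B$ of finite type over $\C$, such that every such $X$ is isomorphic to some fibre $X_b$. A priori the fibres are only terminal (minimal models need not be smooth), so the next step is to cut out the locus of smooth fibres: since smoothness of $\pi$ is an open condition on $B$ and since $\pi$ has equidimensional fibres by construction, the locus
\[
B^{\mathrm{sm}} = \{\, b\in B\, : \, X_b \text{ is smooth}\,\}
\]
is Zariski open in $B$, hence itself of finite type over $\C$.

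Next I would use that any scheme of finite type over $\C$ has only finitely many irreducible (equivalently connected) components in the Zariski topology, and that each such component has a connected underlying complex analytic space. For any connected component $B_i$ of (a resolution of) $B^{\mathrm{sm}}$, the restricted family $\pi\colon \mathcal X\times_B B_i\longrightarrow B_i$ is a smooth projective family of complex manifolds, so by Ehresmann's theorem any two fibres lie in the same deformation class. Every $n$-dimensional complex projective manifold $X$ with $K_X$ nef and $0<K_X^n\leq c$ is a minimal model of general type with bounded volume, hence appears as a smooth fibre of $\pi$, and is therefore deformation equivalent to one of the finitely many ``reference fibres'', one per component $B_i$. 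This gives the finiteness of deformation types.

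For the addendum on Chern and Hodge numbers, I would invoke that Chern numbers are topological invariants (they are computed from the underlying oriented smooth manifold) and that Hodge numbers $h^{p,q}$ are deformation invariants of compact Kähler manifolds (by upper semicontinuity combined with the invariance of $\sum_{p+q=k}h^{p,q}=b_k$ under diffeomorphism). Hence both are constant on each deformation class, so the finiteness of deformation types established above yields a uniform bound for every such invariant in terms of $n$ and $c$; specialising to the case $c=c_1^n(X)$ gives the bound in terms of $c_1^n(X)$ alone. The main subtle point in the argument is the passage from the (possibly singular, possibly reducible) base $B$ provided by Corollary \ref{cor:minmodbdd} to a smooth connected base over which Ehresmann's theorem applies; this is handled by first restricting to $B^{\mathrm{sm}}$ and then stratifying into irreducible components.
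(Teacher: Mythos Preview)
Your approach is correct and matches the paper's: the paper does not give a separate proof of this corollary but indicates in the introduction that it follows from Corollary~\ref{cor:minmodbdd} together with the fact that smoothness is a Zariski open condition on the base, which is precisely your argument. One small imprecision: Chern numbers are not in general invariants of the underlying smooth manifold, but they \emph{are} deformation invariants (the relative tangent bundle of a smooth family restricts to the tangent bundle of each fibre, and cohomology is locally constant by Ehresmann), and that weaker statement is all you need.
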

  
Under the stronger assumption that $K_X$ is ample, Catanese and Schneider 
\cite{catanese-schneider} have previously shown that any Chern number of the cotangent bundle of $X$ can be bounded from above by some constant multiple of $K_X^n$.

By Koll\'ar's effective base point free theorem \cite{kollar-effectivebpf}, Theorem \ref{thm:boundednesspairs} implies the following boundedness result for Calabi--Yau varieties, which answers a question posed to us by V.\ Tosatti.

\begin{corollary} \label{cor:CY}
Let $n$ be a natural number and $c$ a positive constant.
Then the family of all $n$-dimensional complex projective varieties $X$ with  klt singularities,  
$K_X\equiv 0$ and with a big and nef line bundle $L\in \Pic(X)$ such that $L^n\leq c$, is bounded.
\end{corollary}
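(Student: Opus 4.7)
\textbf{Proof plan for Corollary \ref{cor:CY}.} The strategy is to attach to each variety $X$ in the family an auxiliary boundary $\Delta$ so that the pair $(X,\Delta)$ satisfies the hypotheses of Theorem \ref{thm:boundednesspairs}, and then forget $\Delta$.

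\emph{Step 1 (Effective base point freeness).} Since $K_X\equiv 0$ and $L$ is big and nef, the divisor $aL-K_X\equiv aL$ is big and nef for every rational $a>0$. Koll\'ar's effective base point free theorem \cite{kollar-effectivebpf} then supplies a positive integer $m_0=m_0(n)$, depending only on $n$, such that $|m_0L|$ is base point free for every $X$ in the family.

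\emph{Step 2 (Construction of the boundary).} Choose a general member $D\in|m_0L|$. By Bertini $D$ is reduced, and the standard Bertini-type result for klt pairs applied to the base point free linear system $|m_0L|$ shows that $(X,tD)$ is klt for every $t\in[0,1)\cap\Q$. Fix once and for all $t=\tfrac{1}{2}$ and set $\Delta:=\tfrac{1}{2}D$. The coefficients of $\Delta$ all equal $\tfrac{1}{2}$, so they lie in the finite (hence DCC) subset $\{1/2\}\subset[0,1)\cap\Q$, uniformly in $X$.

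\emph{Step 3 (Bounding the volume).} By construction $K_X+\Delta\equiv \tfrac{m_0}{2}L$ is big and nef, and
\[
(K_X+\Delta)^n=\bigl(\tfrac{m_0}{2}\bigr)^n L^n.
\]
Since $L\in\Pic(X)$ is a big and nef \emph{Cartier} divisor, $L^n$ is a positive integer with $L^n\leq c$. Therefore $L^n\in\{1,2,\ldots,\lfloor c\rfloor\}$, and so the volume $(K_X+\Delta)^n$ assumes only finitely many values as $X$ varies in the family.

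\emph{Step 4 (Conclusion via Theorem \ref{thm:boundednesspairs}).} For each of the finitely many admissible volumes, Theorem \ref{thm:boundednesspairs} yields a bounded family of klt pairs $(X,\Delta)$ with $K_X+\Delta$ big and nef, coefficients in $\{1/2\}$, and volume equal to that value. Taking the finite union and projecting to the first factor shows that the underlying varieties $X$ form a bounded family.

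\emph{Main obstacle.} The only nontrivial ingredient beyond the cited results is the uniform choice of the coefficient $\tfrac{1}{2}$: one must know that $|m_0L|$ is base point free with $m_0$ depending only on $n$, which is exactly Koll\'ar's effective base point freeness (made possible by $K_X\equiv 0$), and that this uniform base point freeness is preserved under the Bertini-type inheritance of klt singularities. Once both are in hand, the reduction to Theorem \ref{thm:boundednesspairs} is formal, and the fact that $L^n$ is an integer does the remaining work to replace ``bounded volume'' by ``finitely many fixed volumes''.
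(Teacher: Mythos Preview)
Your proposal is correct and follows essentially the same argument as the paper: apply Koll\'ar's effective base point freeness to $L$ (using $K_X\equiv 0$), take a general member of the resulting free system to build a klt boundary with a single fixed coefficient, observe that $L^n$ is a positive integer so only finitely many volumes occur, and invoke Theorem \ref{thm:boundednesspairs}. The only cosmetic difference is that the paper chooses the coefficient $\tfrac{1}{2(n+2)!(n+1)}$ so that $(K_X+\Delta)^n=L^n$ exactly, whereas you use $\tfrac12$ and get $(m_0/2)^n L^n$; both choices yield a fixed DCC coefficient set and finitely many volumes, so the conclusion is the same.
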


\section{Preliminaries}
\subsection{Conventions and notation} 
We work over an algebraically closed field $\mathbf{k}$ of characteristic zero (usually $\mathbf{k}=\C$).
Schemes are assumed to be separated and of finite type over $\mathbf{k}$; varieties are integral schemes. 
A \emph{curve} is a projective scheme of pure dimension one.  

A \emph{family} is a proper flat morphism of schemes $\pi \colon \mathcal X\longrightarrow B$; 
in this paper, the base $B$ will always assumed to be reduced. 
A \emph{projective family} is a family as above such that $\pi$ is a projective morphism.
If not specified otherwise, a fibre of a family is a fibre over a closed point; if $b\in B$ is such a point, then the fibre of $\pi \colon \mathcal X\longrightarrow B$ above $b$ is denoted by $X_b$. 
If $\mathfrak C$ denotes a class of schemes (e.g.\ projective varieties), then a \emph{family of $\mathfrak C$-schemes} is a family $\pi \colon \mathcal X\longrightarrow B$ such that each fibre is in the class $\mathfrak C$.

A \emph{log pair} $(X,\De)$ is a pair where $X$ is a normal variety and $\De=\sum a_i D_i$ is an effective $\Q$-divisor on $X$ such that $0 \le a_i \le 1$ and $K_X+\De$ is $\Q$-Cartier. 
A \emph{family of pairs} $\pi \colon (\mathcal X,\De) \longrightarrow B$ is a family of varieties $\pi \colon \mathcal X\longrightarrow B$ such that $\De$ is an effective $\Q$-divisor on $\mathcal X$ and each fibre $(X_b,\De_b)$ is a log pair. 

If $\pi \colon \mathcal X\longrightarrow B$ is a projective family, and $C\subset X_b$ is a curve in some fibre of $\pi$, then $[C]_{\mathcal X/B}$ denotes the class of $C$ in the relative space of curves $N_1(\mathcal X/B)$, whereas $[C]$ denotes its class in $N_1(X_b)$.

\subsection{Minimal models and weak log canonical models} \label{subsec:counting} 
For the terminology concerning the singularities of a log pair $(X,\De)$ we refer to \cite[Definition 2.34]{kollar-mori}.

A \emph{minimal model} is a projective variety $Y$ with terminal and $\Q$-factorial singularities such that $K_Y$ is nef. 
A \emph{minimal model} of a variety $X$ is a minimal model $Y$ which is birational to $X$. 
A \emph{marked minimal model} of a variety $X$ is a pair $(Y,\phi)$, where $\phi \colon X\dashrightarrow Y$ is a birational map and $Y$ is a minimal model.
(Note that here we do not assume that $\phi$ is a birational contraction.)
Two marked minimal models $(Y,\phi)$ and $(Y',\phi')$ of $X$ are isomorphic if there is an isomorphism $\psi \colon Y\stackrel{\sim}\longrightarrow Y'$ such that $\phi'=\psi\circ \phi$.

By the \emph{number of marked minimal models} of $X$, we mean the number of marked minimal models of $X$ up to isomorphism.
This number is greater or equal than the number of minimal models of $X$, which is obtained by identifying two marked minimal models $(Y,\phi)$ and $(Y',\phi')$ of $X$ if $Y$ and $Y'$ are isomorphic, without asking any compatibility with $\phi$ and $\phi'$. 

A \emph{weak log canonical model} of a projective klt pair $(X,\Delta)$ is a $(K_X+\Delta)$-non-positive birational contraction $f\colon (X,\Delta)\dashrightarrow (Y,\Gamma=f_\ast \Delta)$ such that $(Y,\Gamma)$ is klt and $K_Y+\Gamma$ is nef (see \cite[Definition 3.6.7]{BCHM}).
The \emph{number of weak log canonical models} of $(X,\Delta)$ is the number of weak log canonical models of $(X,\Delta)$ up to isomorphism. 

\subsection{Volume and log canonical models}\label{subsec:canonical-models} 
A \emph{canonical model} is a projective canonical variety $X$ such that $K_X$ is ample.

The \emph{log canonical model} of a projective lc pair $(X,\Delta)$ is a $(K_X+\Delta)$-non-positive birational contraction $f\colon (X,\Delta)\dashrightarrow (Y,\Gamma=f_\ast \Delta)$ such that $(Y,\Gamma)$ is lc and $K_Y+\Gamma$ is ample, cf.\ \cite[Definition 3.6.7]{BCHM}.

Let $(X,\De)$ be an $n$-dimensional projective log canonical pair.
The \emph{volume} of $(X,\De)$ is given by 
$$
\operatorname{vol} (X,\De)= \limsup_{m\to \infty} \frac{n!\cdot h^{0}(X,m(K_X+\De))}{m^n}.
$$ 
If $K_X+\De$ is nef, then $\operatorname{vol}(X,\De)=(K_X+\De)^n$.
If $X$ is canonical, then its volume $\operatorname{vol}(X):=\operatorname{vol}(X,0)$ is a birational invariant of $X$.

By definition, $(X,\De)$ is of \emph{general type} if and only if $\operatorname{vol} (X,\Delta)>0$.

\subsection{Boundedness of log canonical models}
Theorem \ref{thm:boundednesspairs} will be based on the following boundedness result of Hacon, M$^c$Kernan and Xu. 
  
\begin{theorem}[{\cite[Theorem 1.1]{HMX14}}]\label{thm:boundeed:HMX}
Let $n$ be a natural number, $c$ a positive constant and let $I \subset [0,1) \cap \Q$ 
be a DCC set.  
Consider the set $\mathfrak F_{klt}$ of all klt pairs $(X,D)$ such that
\begin{enumerate}
\item $X$ is a projective variety of dimension $n$, 
\item the coefficients of $D$ belong to $I$,
\item $K_X+D$ is ample, 
\item $(K_X+D)^n= c$. 
\end{enumerate} 
Then the pairs $(X,\Delta)\in \mathfrak F_{klt}$ form a bounded family. 
\end{theorem}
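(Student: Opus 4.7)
The plan is to follow the three-step strategy standard in recent boundedness results from the minimal model program: effective birationality, log birational boundedness, and then upgrading to genuine boundedness of the family of pairs.

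First, I would establish \emph{effective log birationality}: produce an integer $m_0 = m_0(n, I, c)$, depending only on the dimension, the DCC set $I$, and the volume $c$, such that for every $(X, D) \in \mathfrak F_{klt}$ the linear system $|m_0(K_X+D)|$ defines a birational map onto its image. The standard route, following Tsuji and Takayama and refined in the style of Hacon--M$^c$Kernan--Xu, is by induction on the dimension: one cuts with a general member of a carefully chosen sublinear system, restricts the adjoint divisor to the resulting subvariety, and invokes the DCC for volumes of log canonical pairs together with the ACC for log canonical thresholds to keep the restricted data inside a controlled class. Nadel-type vanishing and lifting of sections then promote global generation on the restriction to new sections on $X$, and a quantitative bound on $m_0$ is extracted from the DCC/ACC estimates.

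Second, I would deduce log birational boundedness. Since $K_X+D$ is ample, for $m_0$ divisible enough the rational map $\phi_{m_0}$ is a morphism, and because $X$ is the log canonical model of $(X,D)$ it is an isomorphism onto its image $\overline{X} \subset \PP^{N}$. The relation $\deg \overline{X} = m_0^n \cdot c$ and the bound on $N$ imply that $\overline{X}$ has bounded Hilbert polynomial, so $X$ lies in a bounded subscheme of a fixed Hilbert scheme. Restricting to the constructible locus of klt singularities produces a parametrising family $\mathcal X \to B$ of finite type whose fibres exhaust the underlying varieties of $\mathfrak F_{klt}$.

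Third, I would bound the divisor $D$ itself. Once $X$ varies in a bounded family, the irreducible components of $\mathrm{Supp}(D)$ trace out a bounded relative Chow variety over $B$. The coefficients of $D$ lie in $I$, and the constraint $(K_X+D)^n = c$ combined with ACC for log canonical thresholds forces the tuple of coefficients to lie in a finite subset of $I$; after a Noetherian stratification of $B$ these coefficients become locally constant, yielding the desired parametrising family $(\mathcal X, \Delta) \to B$ of pairs.

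The main obstacle is Step 1: effective birationality is the deep technical heart of the theorem and requires the full strength of the DCC/ACC/finite-generation toolbox developed in \cite{HMX14}. Once $m_0$ is in hand, the Hilbert-scheme and stratification arguments in Steps 2 and 3 are essentially formal.
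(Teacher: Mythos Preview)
The paper does not prove this statement: Theorem~\ref{thm:boundeed:HMX} is quoted from \cite[Theorem~1.1]{HMX14} as a black box and is then used as an input to the proof of Theorem~\ref{thm:boundednesspairs}. So there is no ``paper's own proof'' to compare against.

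That said, your outline is a reasonable high-level summary of the strategy in \cite{HMX14}. One point deserves care: in your Step~2 you pass from ``$|m_0(K_X+D)|$ defines a birational map'' to ``$\phi_{m_0}$ is an isomorphism onto its image'' by invoking ampleness of $K_X+D$. Effective birationality alone does not give an embedding; one needs a bound on the Cartier index of $K_X+D$ (so that some fixed multiple is very ample, e.g.\ via effective base-point-freeness). In the HMX argument this bound is itself extracted from the log birational boundedness together with ACC for log canonical thresholds, so the logical order is: (i) effective birationality $\Rightarrow$ log birational boundedness of a suitable resolution, (ii) from boundedness deduce a uniform Cartier index for $K_X+D$, (iii) then conclude genuine boundedness of the pairs $(X,D)$. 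Your sketch collapses (i)--(ii) a bit too quickly, but the overall architecture is correct.
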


\subsection{Flops}\label{subsec:flops} 
Let $(X,\De)$ be a terminal pair.  
A \emph{flop} 
(cf.\ \cite[Definition 6.10]{kollar-mori}) is a birational map $g  \colon  (X,\De) \dashrightarrow (X^+,\De^+=g_*\De)$ to a normal variety $X^+$ which fits into a commutative diagram 
$$
\xymatrix{
		X \ar@{-->}[rr]^g  \ar[dr]_f & & X^+\ar[dl]^{f^+} \\
		& Z 
	}
$$
where $f$ and $f^+$ are small proper birational morphisms to a normal variety $Z$, such that
\begin{enumerate}
\item $f$ is the contraction of an extremal $(K_{X}+\De)$-trivial ray $R\subset \overline{NE}(X)$; 
\item there is a $\Q$-Cartier divisor $H$ on $X$ such that $-H$ is $f$-ample and the pushforward $H^+:=g_\ast H$ is an $f^+$-ample $\Q$-Cartier divisor. 
\end{enumerate} 

The flop   $g \colon (X,\De) \dashrightarrow (X^+,\De^+)$ is uniquely determined by $R$, see \cite[Corollary 6.4]{kollar-mori}.

If $\pi \colon (X,\De) \longrightarrow B$ is a projective morphism and $f \colon X \longrightarrow Z$ is a small contraction corresponding to a relative $(K_X+\De)$-trivial extremal ray $R \subset \overline{NE}(X/B)$, one can define a flop of $R$ in an analogous way.
In this case, the above diagram is defined over $B$, because $f$ is.

A \emph{sequence of flops} is a finite composition of flops; here we allow also the empty sequence, by which we mean an isomorphism.
We will use the following theorem. 

\begin{theorem}(Kawamata  \cite{kawamata-flops}) \label{thm:minmod-connected} 
Let $(Y,\Gamma)$ and $(Y',\Gamma')$ be projective pairs with terminal and $\Q$-factorial singularities such that $K_Y+\Gamma$ and $K_{Y'}+\Gamma'$ are nef. 
Then any birational map $\alpha \colon (Y,\Gamma) \dashrightarrow (Y',\Gamma')$ such that $\alpha_* \Gamma=\Gamma'$ decomposes into a sequence of flops.

\end{theorem}

\begin{remark} \label{rem:flop}
It is well--known that the converse of Theorem \ref{thm:minmod-connected} holds true as well. 
That is, if $(Y,\Gamma)$ is a projective pair with terminal and $\Q$-factorial singularities such that $K_Y+\Gamma$ is nef and $\xi \colon (Y,\Gamma) \dashrightarrow (Y^+,\Gamma^+)$ is a sequence of flops, then $(Y^+,\Gamma^+)$ is a projective pair with terminal and $\Q$-factorial singularities such that $K_{Y^+}+\Gamma^{+}$ is nef.
\end{remark}

In this paper, we will use the following family versions of the above notions. 

\begin{definition} \label{def:familyflop}
Let $B$ be a variety.
A flop of a family $\pi \colon (\mathcal X,\De) \longrightarrow B$ of terminal pairs is a rational map $G\colon (\mathcal X,\De) \dashrightarrow (\mathcal X^+,\De^+)$ over $B$ which 
is defined at the generic point of each fibre and which satisfies the following:
there is a flat family of curves $\mathcal C\subset \mathcal X$ over $B$, such that for any $b\in B$ the curve $C_b$ spans an extremal $(K_{X_b}+\De_b)$-trivial ray of $\overline{NE}(X_b)$, 
and the restriction of $G$ to the fibre above $b$ is a flop of that ray. 

A sequence of flops of families is a finite composition of flops of families; here we also allow the empty sequence, by which we mean an isomorphism 
over $B$.
\end{definition}
 
Clearly, any base change of a flop of families is again a flop of families.
Moreover, it follows from Lemma \ref{lem:KM92} below that a flop of a projective family of terminal $\Q$-factorial pairs is uniquely determined by its restriction to any fibre.

\section{Deformations of $(K_X + \Delta)$-trivial extremal rays}
In this section we prove a deformation result for $(K_{X}+\Delta)$-trivial extremal rays of general fibres of families of log minimal models of general type, see Proposition \ref{prop:raydeformation} below.
For this we will need some auxiliary results, which we collect in Sections \ref{subsec:1-cycle} and \ref{subsec:Hodge}.  

\subsection{$1$-cycles and monodromy} \label{subsec:1-cycle}
Let $\pi\colon\mathcal X\longrightarrow B$ be a family of varieties.
A flat family of $1$-cycles $\alpha$ on $\mathcal X$ over $B$ is a finite linear combination $\alpha=\sum_i a_i\mathcal C_i$, where $a_i\in \R$ and $\mathcal C_i\subset \mathcal X$ is a flat family of curves over $B$.
We denote by $\alpha_b\in N_1(X_b)$ the class of the fibre of $\alpha$ above $b\in B$.

\begin{lemma}[Koll\'ar--Mori] \label{lem:KM92}  
Let $\pi\colon\mathcal X\longrightarrow B$ be a projective family of complex projective varieties over a variety $B$, such that for all $b\in B$, the fibre $X_b$ is $\Q$-factorial and has only terminal singularities, and let $\alpha$ be a flat family of $1$-cycles on $\mathcal X$ over $B$. 
If for some $b_0\in B$, $\alpha_{b_0}=0$ in $N_1(X_{b_0})$, then $\alpha_{b}=0$ in $N_1(X_b)$ for all $b\in B$. 
\end{lemma}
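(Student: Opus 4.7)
The plan is to introduce the subset $Z = \{b \in B : \alpha_b = 0 \text{ in } N_1(X_b)\}$; since $B$ is a variety (hence irreducible, and in particular connected) and $b_0 \in Z$ by hypothesis, it suffices to show that $Z$ is both Zariski-open and Zariski-closed in $B$.

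The key input is the Koll\'ar--Mori extension theorem in [KM92] on families of $\Q$-factorial terminal varieties: every $\Q$-Cartier divisor class on a fibre lifts, after passing to an \'etale neighbourhood of the base point, to a $\Q$-Cartier divisor on the total space. Concretely, for each $b \in B$ and each $D \in N^1(X_b)$, the plan is to find an \'etale morphism $\sigma \colon U \to B$ through a lift of $b$ together with a $\Q$-Cartier divisor $\mathcal D$ on $\mathcal X \times_B U$ whose restriction to the chosen fibre is numerically equivalent to $D$. Since $\alpha$ is a flat family of $1$-cycles, the intersection number $u \mapsto \alpha_{\sigma(u)} \cdot (\mathcal D|_{X_{\sigma(u)}})$ is then constant on $U$, by the standard deformation invariance of intersection numbers in flat families.

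Granting this, both openness and closedness of $Z$ follow formally. For closedness I would argue as follows: if $b \notin Z$, choose $D \in N^1(X_b)$ with $\alpha_b \cdot D \neq 0$ and extend it to $\mathcal D$ on $\mathcal X \times_B U$; the constancy of the intersection along $U$ forces $\alpha_{b'} \neq 0$ for every $b'$ in the open image of $U$ in $B$, so $B \setminus Z$ is open. For openness: given $b \in Z$ and $b'$ lying in a small enough Zariski neighbourhood of $b$, any $D' \in N^1(X_{b'})$ extends to a $\Q$-Cartier divisor $\mathcal D'$ on an \'etale neighbourhood containing lifts of both $b$ and $b'$; the constancy of $\alpha \cdot \mathcal D'$ combined with $\alpha_b = 0$ yields $\alpha_{b'} \cdot D' = 0$, and varying $D'$ gives $\alpha_{b'} = 0$.

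The main obstacle is the extension property itself: it is not automatic that a $\Q$-Cartier divisor on a fibre of a flat family of varieties lifts, even \'etale-locally, to the total space, and in general it does not. The whole point is that for families of $\Q$-factorial terminal varieties this extension is provided by [KM92]; once that input is granted, the lemma reduces to the formal argument above combined with the deformation invariance of intersection numbers for flat families of curves.
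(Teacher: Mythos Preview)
Your closedness argument for $Z=\{b\in B:\alpha_b=0\text{ in }N_1(X_b)\}$ is fine: if $\alpha_b\neq 0$, a detecting divisor $D\in N^1(X_b)$ extends by \cite{KM92} to an \'etale neighbourhood of $b$, and constancy of the intersection number forces $\alpha_{b'}\neq 0$ on the image, so $B\setminus Z$ is open.

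The openness argument, however, has a genuine gap. To conclude $\alpha_{b'}=0$ for $b'$ near a given $b\in Z$, you extend an arbitrary $D'\in N^1(X_{b'})$ to an \'etale neighbourhood $U'$ and then evaluate at a point of $U'$ lying over $b$. But the Koll\'ar--Mori extension theorem produces an \'etale neighbourhood of $b'$, not of $b$, and nothing forces its image to contain $b$: the neighbourhood $U'$ depends on both $b'$ and $D'$, so no ``small enough'' Zariski neighbourhood of $b$ chosen in advance can guarantee this. The asymmetry with the closedness step is real. There you only had to spread a \emph{single} divisor away from a point; here you would need every divisor on $X_{b'}$ to spread back to $b$, which fails in general because the Picard number of the fibres can jump (already for smooth families, e.g.\ of K3 surfaces, which are certainly $\Q$-factorial and terminal).

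The paper does not run a direct open--closed argument. It checks that terminal fibres are smooth in codimension two and have rational singularities, hence \cite[Condition~12.2.1]{KM92} holds, and then cites \cite[Proposition~12.2.6]{KM92}. The mechanism behind that proposition is that flat $1$-cycles modulo fibrewise numerical equivalence form a \emph{local system} $\mathcal{GN}_1(\mathcal X/B)$ on $B$ (cf.\ Section~\ref{subsubsec:monodromy}); a global section of a local system vanishing at one point of a connected base vanishes identically. If you want to salvage your approach without invoking this directly, one clean fix is: keep your proof that $Z$ is closed, then show $Z$ contains every very general $t\in B$ by passing to a finite \'etale cover on which $N^1(\mathcal X)\to N^1(X_t)$ is onto (Lemma~\ref{lem:monodromy}), so that every $D'\in N^1(X_t)$ is the restriction of a global divisor $\mathcal D'$ and $\alpha_t\cdot D'=\alpha_{b_0}\cdot \mathcal D'|_{X_{b_0}}=0$. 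Then $Z$ is closed and dense, hence $Z=B$.
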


\begin{proof}
Since the fibres of $\pi$ are terminal, they are smooth in codimension two and have only rational singularities, see \cite[Corollary 5.18 and Theorem 5.22]{kollar-mori}.
By \cite[Remark 12.2.1.4.2]{KM92}, \cite[Condition 12.2.1]{KM92} is satisfied and so the lemma follows from \cite[Proposition 12.2.6]{KM92}, where we note that the arguments used work not only for $1$-cycles with rational, but also with real coefficients. 
\end{proof}

\begin{lemma}[Koll\'ar--Mori] \label{lem:monodromy}
Let $B$ be a complex variety and let $\pi\colon\mathcal X\longrightarrow B$ be a projective family of terminal and $\Q$-factorial varieties.
After replacing $B$ by a finite \'etale covering,
the restriction map $N^1(\mathcal X)\longrightarrow N^1(X_t)$ is onto for any very general $t\in B$.
\end{lemma}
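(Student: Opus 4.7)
The plan is to kill the relevant monodromy on the Néron--Severi part of the fibre cohomology after a finite étale base change, and then to invoke Deligne's global invariant cycle theorem to lift classes back to the total space.

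First I would shrink $B$ to a smooth Zariski open on which the family admits a log resolution $\mu \colon \tilde{\mathcal X} \to \mathcal X$ that is smooth over $B$; after further shrinking, Ehresmann's theorem yields topological local triviality. For very general $t$, $\mu_t \colon \tilde X_t \to X_t$ is a resolution and, since $X_t$ is $\Q$-factorial with only rational singularities, pull-back embeds $N^1(X_t)_\Q$ as a subspace $V_t \subset H^2(\tilde X_t, \Q)$ of pure Hodge type $(1,1)$, complementary to the span of $\mu_t$-exceptional divisor classes. The subspace $V_t$ is monodromy stable for $t$ outside a countable union of proper closed subvarieties, since all very general fibres share the same Picard number and the algebraic sublattice is preserved under parallel transport between them.

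The heart of the argument is to show that the monodromy representation $\rho \colon \pi_1(B^{an},t) \to \GL(V_t)$ has finite image. To this end, I would fix a relatively ample $\Q$-line bundle $\mathcal H$ for $\pi$, let $h \in V_t$ be the class of its pull-back to $\tilde X_t$ (monodromy invariant by construction), and consider the symmetric bilinear form $q(\alpha,\beta) := \alpha \cdot \beta \cdot h^{n-2}$ on $V_t$, where $n = \dim X_t$. The form $q$ is preserved by $\rho$, and by the Hodge index theorem applied inside $H^{1,1}(\tilde X_t)$ it is negative definite on the hyperplane $h^\perp \subset V_t$. Hence the image of $\rho$ lies in a compact orthogonal group while also preserving the integral lattice $N^1(X_t) \cap V_t$, forcing it to be finite. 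Replacing $B$ by the finite étale cover corresponding to $\ker(\rho|_{V_t})$ then trivialises the action on $V_t$.

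Once the monodromy is trivial, Deligne's global invariant cycle theorem applied to a smooth compactification $\overline{\mathcal X}$ of $\tilde{\mathcal X}$ yields surjectivity $H^2(\overline{\mathcal X}, \Q) \to H^2(\tilde X_t, \Q)^{\pi_1}$, compatibly with mixed Hodge structures. A class $\alpha \in V_t$ thus lifts to a Hodge class of type $(1,1)$ on $\overline{\mathcal X}$, which by Lefschetz's $(1,1)$-theorem is represented by an algebraic $\Q$-divisor; pushing forward through $\overline{\mathcal X} \to \tilde{\mathcal X} \to \mathcal X$ produces the required class in $N^1(\mathcal X)$ restricting to $\alpha$. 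The hard part will be the finiteness of monodromy on $V_t$, where the polarisation argument depends critically on the existence of a relatively ample class for the original family; a secondary technical point is to pass cleanly from Hodge classes on the smooth compactification $\overline{\mathcal X}$ back to numerical classes on the possibly singular $\mathcal X$, which requires compatibility of the mixed Hodge structure with the behaviour of numerical equivalence in families provided by Lemma \ref{lem:KM92}.
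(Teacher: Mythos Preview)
Your approach is correct in outline, but it takes a much longer route than the paper. The paper's proof is two lines: since the fibres are terminal they are smooth in codimension two with rational singularities, so \cite[Condition~12.2.1]{KM92} is satisfied, and the conclusion is then quoted verbatim from \cite[Proposition~12.2.5]{KM92}. What you have written is essentially a from-scratch reproof of that proposition: the core idea---that the monodromy on $N^1(X_t)$ preserves the signature-$(1,\rho-1)$ form $\alpha\cdot\beta\cdot h^{n-2}$ furnished by the Hodge index theorem together with a lattice, hence has finite image---is exactly the mechanism Koll\'ar and Mori use, and your appeal to the global invariant cycle theorem plus Lefschetz $(1,1)$ is the standard way to finish. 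So your argument buys self-containment at the price of length; the paper simply outsources the work.

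Two of your technical steps deserve tightening. The monodromy-stability of $V_t=\mu_t^\ast N^1(X_t)$ is true, but not for the reason you give: the clean justification is that $V_t$ is the annihilator in $N^1(\tilde X_t)$ of the $\mu_t$-exceptional curve classes, and these come from global exceptional curves of $\mu$ after shrinking $B$, hence are monodromy-invariant. For the Hodge index step, note that $h=\mu_t^\ast\mathcal H_t$ is only big and nef on $\tilde X_t$, so the argument is really Hodge index on $N^1(X_t)$ with the genuinely ample $\mathcal H_t$; the projection formula identifies the two forms on $V_t$. Finally, you shrink $B$ at the outset whereas the lemma speaks only of a finite \'etale cover; this is harmless in the paper's applications, and the Koll\'ar--Mori setup of $\mathcal{GN}_1(\mathcal X/B)$ in fact avoids the shrinking, so the discrepancy is cosmetic.
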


\begin{proof}
As in the proof of Lemma \ref{lem:KM92}, our assumptions imply that $\pi$ satisfies \cite[Condition 12.2.1]{KM92}.
The assertion follows therefore from \cite[Proposition 12.2.5]{KM92}.
\end{proof}

\subsubsection{The local system of flat $1$-cycles} \label{subsubsec:monodromy}
Let $B$ be a complex variety and let $\pi\colon\mathcal X\longrightarrow B$ be a projective family of terminal  and $\Q$-factorial varieties. 
Following Koll\'ar and Mori \cite[Definition 12.2.7]{KM92}, there is a local system $\mathcal {GN}_1(\mathcal X/B)$ in the analytic topology of $B$, which maps an analytic open subset $U\subseteq B$ to the set of flat $1$-cycles on $\mathcal X\times_B U$ over $U$ with real coefficients and modulo fibrewise numerical equivalence.
Note that in \cite{KM92} rational coefficients are used, but all results hold also for real coefficients, cf.\ \cite[p.\ 116]{defernex-hacon}. 
For very general $t\in B$, $N_1(X_t)= \mathcal {GN}_1(\mathcal X/B)_t$, and so there is a natural monodromy action on $N_1(X_t)$, see \cite[Proposition 12.2.8]{KM92}.
By \cite[Corollary 12.2.9]{KM92}, this monodromy action is trivial (i.e.\ the local system $\mathcal {GN}_1(\mathcal X/B)$ is trivial) if and only if $N^1(\mathcal X)\longrightarrow N^1(X_t)$ is onto for very general $t\in B$.

By \cite[Theorem 5.22]{kollar-mori}  and \cite[12.1.5.2]{KM92}, numerical and homological equivalence coincide for $1$-cycles on the fibres of $\pi$.
Therefore, $\mathcal {GN}_1(\mathcal X/B)$ can be identified with a subsheaf of $(R^2\pi_{\ast} \R)^{\vee}$. 
If $\pi$ is smooth, then it follows from the equality $N_1(X_t)= \mathcal {GN}_1(\mathcal X/B)_t$ for very general $t\in B$, that this subsheaf coincides with the local subsystem of $(R^2\pi_{\ast} \R)^{\vee}$ which is  generated (over $\R$) by rational sections (i.e.\ sections of $(R^2\pi_{\ast} \Q)^{\vee}$) that are pointwise of Hodge type $(-1,-1)$. 

\subsection{Hodge structures and singularities}
\label{subsec:Hodge}
Recall that by the Lefschetz $(1,1)$-theorem, for any smooth complex projective variety $\widetilde X$, a class in $H^2(\widetilde X,\Q)$ is algebraic if and only if it is of Hodge type $(1,1)$.
This explains the significance of the following result. 

\begin{lemma} \label{lem:MHS}
Let $X$ be a complex projective variety with only rational singularities, and let $\tau\colon \widetilde X\longrightarrow X$ be a resolution of singularities.
Then the cokernel of the natural morphism
$\tau^{\ast} \colon H^2(X,\Q)\longrightarrow H^2(\widetilde X,\Q)$ 
is a pure Hodge structure of type $(1,1)$.  
\end{lemma}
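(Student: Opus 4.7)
The plan is to combine Deligne's theory of mixed Hodge structures with the Du Bois property of rational singularities. By functoriality, $\tau^\ast$ is a morphism of mixed Hodge structures, so the cokernel $C := H^2(\widetilde X, \Q)/\tau^\ast H^2(X,\Q)$ inherits a natural MHS. Since $\widetilde X$ is smooth projective, $H^2(\widetilde X, \Q)$ is pure of weight $2$. Any morphism of MHS into a pure MHS of weight $w$ kills all graded weight pieces of weight $\neq w$ by strictness, so its image is a sub-MHS of a pure weight-$w$ object, hence pure of weight $w$. Therefore $\tau^\ast H^2(X,\Q)$ is pure of weight $2$, and $C$, as a quotient of a pure weight-$2$ Hodge structure by a pure weight-$2$ sub-structure, is also pure of weight $2$.

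It remains to show that $C$ has no $(2,0)$ or $(0,2)$ component. Since these are complex conjugate, it suffices to prove $\mathrm{Gr}^0_F C = 0$, i.e.\ that $\tau^\ast$ is surjective on $\mathrm{Gr}^0_F H^2(-,\C)$. For the smooth variety $\widetilde X$, ordinary Hodge theory gives $\mathrm{Gr}^0_F H^2(\widetilde X,\C) = H^2(\widetilde X, \OO_{\widetilde X})$. For the singular variety $X$, I would invoke the theorem of Kov\'acs and Saito that rational singularities are Du Bois, whence the $0$-th piece of the filtered de Rham complex satisfies $\underline\Omega^0_X \simeq \OO_X$ in the derived category; this gives the identification $\mathrm{Gr}^0_F H^2(X,\C) \cong H^2(X, \OO_X)$. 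Under these identifications, the map on $\mathrm{Gr}^0_F$ becomes the ordinary pullback $H^2(X, \OO_X) \to H^2(\widetilde X, \OO_{\widetilde X})$.

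The final step is to observe that this pullback is an isomorphism: by the very definition of rational singularities one has $R\tau_\ast \OO_{\widetilde X} \simeq \OO_X$, and the Leray spectral sequence degenerates to produce the desired isomorphism on $H^2$. Consequently $\mathrm{Gr}^0_F C = 0$, so $C^{0,2} = 0$, and by complex conjugation $C^{2,0} = 0$ as well. This shows that $C$ is a pure Hodge structure of type $(1,1)$.

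The key input is the identification $\mathrm{Gr}^0_F H^2(X,\C) \cong H^2(X, \OO_X)$, which rests on the Du Bois property of rational singularities; once this is granted, the rest of the argument is a formal consequence of strictness of morphisms of MHS with respect to the weight and Hodge filtrations.
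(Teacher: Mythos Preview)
Your proof is correct and follows essentially the same route as the paper: both arguments reduce the claim to showing that the composition $H^2(X,\C)\to H^2(\widetilde X,\C)\to H^{0,2}(\widetilde X)\cong H^2(\widetilde X,\mathcal O_{\widetilde X})$ is surjective, and both use the Leray isomorphism $H^2(\widetilde X,\mathcal O_{\widetilde X})\cong H^2(X,\mathcal O_X)$ coming from rational singularities. The only difference is in how the remaining surjectivity is justified: the paper cites directly Koll\'ar's theorem \cite[Theorem~12.3]{kollar-shafarevich} that $H^2(X,\C)\to H^2(X,\mathcal O_X)$ is onto for varieties with rational singularities, whereas you invoke the stronger identification $\mathrm{Gr}^0_F H^2(X,\C)\cong H^2(X,\mathcal O_X)$ via the Du~Bois property. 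Koll\'ar's result is in fact proved via (a precursor of) the Du~Bois argument, so the two approaches are essentially equivalent; the paper's citation is marginally lighter since it only needs surjectivity rather than the full graded identification.
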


\begin{proof}
Since $\tau^{\ast}$ is a morphism of mixed Hodge structures and $\widetilde X$ is smooth,  $\im(\tau^{\ast})$ is a (pure) sub Hodge structure of $H^2(\widetilde X,\Q)$.  
By classical Hodge theory, it therefore suffices to prove that the composition
$$
H^2(X,\C)\stackrel{\tau^{\ast}}\longrightarrow H^2(\widetilde X,\C) 
\twoheadrightarrow
H^{0,2}(\widetilde X)\cong H^2(\widetilde X,\mathcal O_{\widetilde X})
$$
is onto. 
Since $X$ has only rational singularities, the Leray spectral sequence induces an isomorphism
$
H^2(\widetilde X,\mathcal O_{\widetilde X}) \cong H^2(X,\mathcal O_X)
$, 
and the natural map $H^2(X,\C)\longrightarrow H^2(X,\mathcal O_X) $ is onto, see \cite[Theorem 12.3]{kollar-shafarevich}. 
This concludes the lemma. 
\end{proof}

\begin{lemma}\label{lem:X=terminal}  
Let $B$ be a quasi-projective variety, and let $\pi \colon (\mathcal X,\Delta)\longrightarrow B$ be a family of terminal pairs. 
Then, for any sufficiently small dense open subset $U\subset B$, the base change $\mathcal X_U:=\mathcal X\times_BU$ with boundary $\Delta_U:=\Delta\times_X U$ form a terminal pair $(\mathcal X_U,\Delta_U)$. 
\end{lemma}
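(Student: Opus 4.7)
The plan is to reduce the terminality of the total space $(\mathcal X_U,\Delta_U)$ to that of the closed fibres via a simultaneous log resolution. After a sequence of shrinkings of $B$, I will produce a log resolution $\mu\colon \widetilde{\mathcal X}\longrightarrow \mathcal X$ of $(\mathcal X,\Delta+\SING(\mathcal X))$ whose restriction to the fibre above any closed point of $U$ is itself a log resolution of $(X_b,\Delta_b)$, and then match the discrepancies on the total space with those on the fibres.

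Some preparatory shrinkings are needed first. After shrinking, $B$ is smooth and $\mathcal X$ is normal (since every closed fibre is normal, being terminal, and normality of the total space follows when all fibres are normal and $\pi$ is flat). The more delicate step is to arrange that $K_{\mathcal X/B}+\Delta$ is $\Q$-Cartier on a dense open. To this end, I pick a closed point $b_0\in B$ and a positive integer $m$ such that $m(K_{X_{b_0}}+\Delta_{b_0})$ is Cartier. The reflexive sheaf $\mathcal L:=\OO_{\mathcal X}(m(K_{\mathcal X/B}+\Delta))$ then restricts to an invertible sheaf on $X_{b_0}$, so the closed locus in $\mathcal X$ where $\mathcal L$ fails to be locally free avoids $X_{b_0}$; the image of this locus under the proper map $\pi$ is a proper closed subset of $B$ not containing $b_0$, and replacing $B$ by its complement gives an open $U$ over which $\mathcal L$ is a line bundle. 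In particular $K_{\mathcal X_U/U}+\Delta_U$ is $\Q$-Cartier.

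Next, I take a log resolution $\mu\colon \widetilde{\mathcal X}\longrightarrow \mathcal X$ of the pair $(\mathcal X,\Delta+\SING(\mathcal X))$ and shrink $B$ further. By generic smoothness applied to $\pi\circ\mu$ together with constructibility of the locus where the fibrewise data has SNC support, I may assume that $\pi\circ\mu$ is smooth, that every $\mu$-exceptional divisor $E_i$ and every component of the strict transform $\widetilde\Delta$ is flat over $B$ with image equal to $B$, and that for every closed $b\in U$ the restriction $\mu_b\colon \widetilde X_b\longrightarrow X_b$ is a log resolution of $(X_b,\Delta_b)$ in which $\widetilde\Delta|_{\widetilde X_b}$ equals the strict transform of $\Delta_b$ and each $E_i|_{\widetilde X_b}$ is $\mu_b$-exceptional. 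Writing the discrepancy relation
\[
K_{\widetilde{\mathcal X}/B}+\widetilde\Delta \;=\; \mu^*(K_{\mathcal X/B}+\Delta)+\sum_i a_i E_i
\]
on the total space and restricting to the fibre above $b\in U$, using smoothness of $B$ together with flatness (respectively smoothness) of $\pi$ and $\pi\circ\mu$ to pass relative dualizing sheaves to the fibres, yields
\[
K_{\widetilde X_b}+\widetilde\Delta_b \;=\; \mu_b^*(K_{X_b}+\Delta_b)+\sum_i a_i E_{i,b}
\]
with the same coefficients $a_i$. Since each $(X_b,\Delta_b)$ is terminal, $a_i>0$ for every $\mu_b$-exceptional $E_{i,b}$, and hence for every $\mu$-exceptional $E_i$; this is precisely the terminality of $(\mathcal X_U,\Delta_U)$. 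The main obstacle lies in the preparatory $\Q$-Cartier step together with checking that the simultaneous log resolution can be arranged to restrict fibrewise to log resolutions; once these are in place, the discrepancy match is essentially formal.
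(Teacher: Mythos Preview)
Your overall strategy—produce a simultaneous log resolution and read off the discrepancies fibrewise—is sound, and it is essentially what lies behind the reference the paper invokes: the paper's own proof is two lines, reducing to $\mathcal X$ normal and then citing \cite[Proposition~3.5]{defernex-hacon}. So your argument is not so much a different route as an unpacking of that black box, and the log-resolution and discrepancy-comparison paragraphs are correct as written.

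There is, however, a genuine gap in your $\Q$-Cartier step. You assert that the reflexive sheaf $\mathcal L=\OO_{\mathcal X}\bigl(m(K_{\mathcal X/B}+\Delta)\bigr)$ restricts to an invertible sheaf on $X_{b_0}$ because $m(K_{X_{b_0}}+\Delta_{b_0})$ is Cartier. But restriction of a rank-one reflexive sheaf to a closed fibre need not be reflexive, so there is no a~priori identification of $\mathcal L|_{X_{b_0}}$ with $\OO_{X_{b_0}}\bigl(m(K_{X_{b_0}}+\Delta_{b_0})\bigr)$; knowing the latter is invertible says nothing direct about the former. (The case $m=1$, $\Delta=0$ is special: there $\mathcal L=\omega_{\mathcal X/B}$ and base change for the relative dualizing sheaf does give what you want. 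For higher reflexive powers or nontrivial boundary this fails.) Contrast this with restriction to the \emph{generic} fibre $X_\eta$: that is a flat localisation, so reflexivity is preserved and $\mathcal L|_{X_\eta}=\OO_{X_\eta}\bigl(m(K_{X_\eta}+\Delta_\eta)\bigr)$ genuinely holds.

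The fix is therefore to run the argument at the generic fibre rather than at a closed one: show that $K_{X_{\bar\eta}}+\Delta_{\bar\eta}$ is $\Q$-Cartier on the geometric generic fibre, descend to a finite extension of $\mathbf k(B)$, and spread out. The paper carries out exactly this manoeuvre in the proof of the analogous klt statement (Lemma~\ref{lem:X=klt}), using a finite base change and a pushforward trick to conclude that $K_{\mathcal X}+\Delta$ is $\Q$-Cartier after shrinking $B$. Once that is in place, the rest of your proof goes through verbatim.
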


\begin{proof}  
Since the fibres of $\pi$ are normal, the normalization of $\mathcal X$ is an isomorphism over a general point of $B$. 
After shrinking $B$, we may thus assume that $\mathcal X$ is normal.
The lemma follows then for instance from \cite[Proposition 3.5]{defernex-hacon}.
\end{proof}

\subsection{Deforming $(K_X+\Delta)$-trivial rays}
The next proposition is the key technical result of this paper. 
It can be seen as a weak version of Koll\'ar--Mori's deformation theorem for threefold flops, see \cite[Theorem 11.10]{KM92}. 
While our result holds in any dimension, the original result of Koll\'ar and Mori 
reflects a peculiarity of threefold geometry, cf.\ Remark \ref{rem:raydeformation} below.

\begin{proposition} \label{prop:raydeformation}
Let $B$ be a variety, and let $\pi\colon (\mathcal X,\Delta)\longrightarrow B$ be a projective family of terminal and $\Q$-factorial pairs $(X_b,\Delta_b)$ with $K_{X_b}+\Delta_b$ big and nef.
Suppose that the restriction map $N^1(\mathcal X)\longrightarrow N^1(X_t)$ is onto for very general $t\in U$. 
Then, for any sufficiently small Zariski open and dense subset $U\subset B$, the base change $\pi_U\colon (\mathcal X_U,\Delta_U)\longrightarrow U$ has the following property.

For any point $0\in U$ and any extremal $(K_{X_0}+\Delta_0)$-trivial ray $R\subset \overline{NE}(X_0)$, there is a flat family of curves $\mathcal C\subset \mathcal X_U$ over $U$, such that 
\begin{enumerate}
\item the fibre of $\mathcal C$ above $0$ spans $R$, i.e.\ $[C_0]\in R$; \label{item:prop:raydef:1}
\item for all $b\in U$, the fibre $C_b$ spans an extremal $(K_{X_b}+\Delta_b)$-trivial ray of $\overline{NE}(X_b)$;
\item for all $b\in U$, the fibre $C_b$ spans an extremal $(K_{\mathcal X}+\Delta)$-trivial ray of $\overline{NE}(\mathcal X_U/U)$; 
\item 
if $C_b$ spans a small ray of $\overline{NE}(X_b)$ for one $b\in U$, then the same holds true for all $b\in U$. 
\label{item:prop:raydef:F}
\end{enumerate} 
\end{proposition}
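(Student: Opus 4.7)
The approach is to produce $\mathcal C$ from the \emph{relative} log canonical contraction of $(\mathcal X_U,\Delta_U)$ over $U$: the extremal $(K+\Delta)$-trivial rays of the fibres will all arise by restriction from finitely many extremal rays of the relative trivial cone, each spanned by a flat family of $\phi_U$-contracted curves.

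First I shrink $B$ to a dense Zariski open $U$ so that $(\mathcal X_U,\Delta_U)$ is a terminal pair (Lemma \ref{lem:X=terminal}), $\pi_U$ is flat with $\mathcal X_U$ quasi-projective, and, using the surjectivity of $N^1(\mathcal X)\to N^1(X_t)$ for very general $t$ together with \cite[Corollary 12.2.9]{KM92}, the local system $\mathcal{GN}_1(\mathcal X_U/U)$ is trivial. In particular $N_1(X_b)\hookrightarrow N_1(\mathcal X_U/U)$ for very general $b\in U$, so restriction to a very general fibre identifies the relative trivial cone with the fibrewise trivial cone.

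Since $K_{\mathcal X_U/U}+\Delta_U$ is $\pi_U$-big and $\pi_U$-nef on the klt pair $(\mathcal X_U,\Delta_U)$, the relative base point free theorem produces a $U$-morphism $\phi_U\colon \mathcal X_U\to \mathcal Y_U$ restricting on each fibre to the log canonical contraction $\phi_b$ of $(X_b,\Delta_b)$. The relative cone theorem applied to $\phi_U$ exhibits the face $(K_{\mathcal X_U/U}+\Delta_U)^\perp\subset \overline{NE}(\mathcal X_U/U)$ as rational polyhedral, with finitely many extremal rays $R_1^{\mathrm{rel}},\dots,R_s^{\mathrm{rel}}$, each spanned by a flat family $\mathcal C_i\subset \mathcal X_U$ of rational curves contracted by an extremal sub-contraction of $\phi_U$ (after possibly shrinking $U$ further and invoking properness of the relative Chow variety to globalize representatives). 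Given the ray $R\subset \overline{NE}(X_0)$, any spanning curve is $\phi_0$-contracted, hence lies in the relative contracted locus; matching it with the appropriate $R_i^{\mathrm{rel}}$, I set $\mathcal C := \mathcal C_i$. Properties (1) and (3) are then immediate; property (2) follows since $\mathcal C|_{X_b}$ lies in a fibre of $\phi_b$ and therefore spans an extremal trivial ray of the fibrewise cone; property (4) follows from upper-semicontinuity of fibre dimension for the extremal sub-contraction of $R_i^{\mathrm{rel}}$, after one last shrinking of $U$ to make this dimension locally constant.

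The hard step is the matching in the previous paragraph at a non-very-general $0\in U$, where the Picard rank of $X_0$ may jump. One must argue that every extremal $(K+\Delta)$-trivial ray of $\overline{NE}(X_0)$ arises by restriction from some $R_i^{\mathrm{rel}}$, and, conversely, that each such restriction remains extremal in the fibre cone. At very general $b$ this follows from the triviality of $\mathcal{GN}_1(\mathcal X_U/U)$, but at the special point $0$ one must combine the big-and-nefness of $K_{X_0}+\Delta_0$ (which bounds the trivial face above and makes it rational polyhedral) with a detailed analysis of the fibres of $\phi_U$ and its extremal sub-contractions, ruling out the existence of extra extremal trivial rays on $X_0$ that do not deform in the family.
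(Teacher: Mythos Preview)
Your overall framework---shrink $B$, build the relative log canonical contraction $\phi_U$, decompose its trivial face into finitely many relative extremal rays $R_i^{\mathrm{rel}}$, and match---is exactly the skeleton of the paper's proof. But the ``matching'' step you flag as hard is a genuine gap, and your sketch of how to fill it does not contain the key idea. The issue is this: given an extremal $(K_{X_0}+\Delta_0)$-trivial curve $C_0$ on a special fibre, its image $[C_0]_{\mathcal X_U/U}$ in the relative cone is certainly $(K+\Delta)$-trivial, so you can write $[C_0]_{\mathcal X_U/U}=\sum a_i[D_i]_{\mathcal X_U/U}$ with $D_i$ spanning the $R_i^{\mathrm{rel}}$. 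What you need is that this same relation holds in $N_1(X_0)$, so that extremality of $[C_0]$ there forces all but one $a_i$ to vanish. Since the map $N_1(X_0)\to N_1(\mathcal X_U/U)$ need not be injective at a special point, this requires showing that $[C_0]$ itself lies in the subspace $\mathcal{GN}_1(\mathcal X_U/U)_0\subset N_1(X_0)$ of classes that extend to flat families of $1$-cycles. The paper accomplishes this by a Hodge-theoretic argument: the log canonical model has rational (in fact klt) singularities, so by Lemma~\ref{lem:MHS} the kernel of $(\widetilde p^\ast)^\vee$ on $H^2$ is a variation of Hodge structures pure of type $(-1,-1)$; a complement to the kernel of $(\tau^\ast)^\vee$ inside it is then a trivial local system, which lets one globalize the class of a lift $\widetilde C_0$. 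Nothing in ``big-and-nefness bounds the trivial face'' or ``analysis of the fibres of $\phi_U$'' substitutes for this.

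There is also an error in your derivation of property~(2): ``$\mathcal C|_{X_b}$ lies in a fibre of $\phi_b$'' only shows $[C_b]$ is $(K_{X_b}+\Delta_b)$-trivial, not that it is \emph{extremal} in $\overline{NE}(X_b)$. The paper needs a separate argument (its Step~5): if $[D_{b_0}]$ were not extremal for some $b_0$, write it as a positive combination of extremal trivial curves on $X_{b_0}$, apply the globalization argument above to each of them, and use Lemma~\ref{lem:KM92} to propagate the non-extremality to every fibre, contradicting extremality at a very general point. The same mechanism shows that the relative contraction of $R_i^{\mathrm{rel}}$ restricts fibrewise to the contraction of the corresponding fibrewise ray, which is what actually yields~(4).
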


\begin{proof} 
Since the conclusions of Proposition \ref{prop:raydeformation} are stable under shrinking $U$, it suffices to prove the existence of one open and dense subset $U\subset B$ which satisfies Proposition \ref{prop:raydeformation}.
For ease of notation, we will 
suppress the base changes made in our notation. 

We start with some preliminary considerations.
By Lemma \ref{lem:X=terminal}, we may after shrinking $B$ assume that $(\mathcal X,\Delta)$ has only terminal singularities. 
By assumptions, $K_{\mathcal X}+\Delta$ is $\pi$-big and $\pi$-nef and so we can apply the relative base point free theorem \cite[Theorem 3.24]{kollar-mori} to obtain a morphism $p\colon \mathcal X\longrightarrow \mathcal X^{can}$ to the relative log canonical model $\mathcal X^{can}$ of $(\mathcal X,\Delta)$ over $B$. 
For $\Delta^{can}:=p_\ast \Delta$, we consider the natural morphism $\pi^{can}\colon (\mathcal X^{can},\Delta^{can})\longrightarrow B$.
Since $(\mathcal X,\Delta)$ is terminal, $\lfloor \Delta \rfloor=0$.
Hence, $(\mathcal X^{can},\Delta^{can})$ is klt.
By Bertini's theorem, we may then assume after shrinking $B$, that all fibres $(X^{can}_b,\Delta^{can}_b)$ of $\pi^{can}$ are klt.  
After shrinking $B$ further, we can by Verdier's generalization of Ehresmann's lemma assume that $\mathcal X$ and $\mathcal X^{can}$ are topologically trivial over $B$, and so $R^2\pi_{\ast} \Q$ and $R^2\pi^{can}_\ast \Q$ are local systems on $B$, see \cite[Corollaire 5.1]{verdier}.

Let $\tau\colon\widetilde{\mathcal X}\longrightarrow \mathcal X$ be a projective resolution of singularities.
After shrinking $B$, we may assume that the natural morphism $\widetilde{\pi}\colon \widetilde {\mathcal X}\longrightarrow B$ is smooth and so $R^{2}\widetilde \pi_{\ast} \Q$ is also a local system. 
We then proceed in several steps.
\\

\textbf{Step 1.}
For any global section $\gamma\in \Gamma(B, (R^{2}\widetilde \pi_{\ast} \Q)^{\vee})$, which is pointwise of type $(-1,-1)$ (i.e.\ which corresponds pointwise to a class in $N_1(\widetilde X_b)$), there is a flat family of $1$-cycles $\alpha$ on $\mathcal X$ over $B$ such that $\tau_{\ast}\gamma_b=\alpha_b$ in $N_1(X_b)$ for all $b\in B$.

\begin{proof} 
Recall from Section \ref{subsubsec:monodromy} the local system $\mathcal {GN}_1(\mathcal X/B)$ on $B$, which can be identified with a subsheaf of $(R^{2} \pi_{\ast} \R)^{\vee}$. 
We claim that $(\tau^{\ast})^{\vee}(\gamma)\in \Gamma(B, (R^{2} \pi_{\ast} \Q)^{\vee})$, viewed as a section of $(R^{2} \pi_{\ast} \R)^{\vee}$, is contained in that subsheaf.
Indeed, since $R^{2} \pi_{\ast} \R$ is a local system, it suffices to prove this inclusion at a single point of $B$, and so the claim follows from the equality $\mathcal {GN}_1(\mathcal X/B)_t=N_1(X_t)$ for very general $t\in B$ and the fact that $(\tau^{\ast})^{\vee}(\gamma)_t\in N_1(X_t)$, because $\gamma$ is pointwise of type $(-1,-1)$ and so it can be represented by a $1$-cycle.

As we have thus seen, $(\tau^{\ast})^{\vee}(\gamma)\in \Gamma(B,\mathcal {GN}_1(\mathcal X/B))$, and so we obtain a flat family of $1$-cycles $\alpha$ on $\mathcal X$ over $B$, which has the property we want in Step 1. 
\end{proof}

\textbf{Step 2.}   
For any point $0\in B$ and any $(K_{X_0}+\Delta_0)$-trivial curve $C_0$ in the fibre $X_0$, there is a flat family of $1$-cycles $\alpha$ on $\mathcal X$ over $B$ such that $\alpha_0=[C_0]$ in $N_1(X_0)$.

\begin{proof}  
We will use that $R^2\widetilde \pi_{\ast}\Q$, $R^2\pi_{\ast} \Q$ and $R^2\pi^{can}_\ast \Q$ are local systems on $B$, and that the fibres of $\pi^{can}$ have klt singularities.

Let $\widetilde p\colon\widetilde {\mathcal X} \longrightarrow \mathcal X^{can}$ be the composition of the resolution $\tau \colon \widetilde{\mathcal X}\longrightarrow \mathcal X$ with the morphism $p\colon\mathcal X\longrightarrow \mathcal X^{can}$ to the relative log canonical model of $(\mathcal X,\Delta)$ over $B$. 
The kernel
$$
\mathcal H:=\ker \left( (\widetilde p^{\ast})^{\vee}\colon(R^2\widetilde \pi_\ast \Q)^{\vee}\longrightarrow (R^2\pi^{can}_\ast \Q)^{\vee} \right)
$$
is then a local system on $B$.
In fact, since $\widetilde p^{\ast}$ is fibrewise a morphism of mixed Hodge structures and $(R^2\widetilde \pi_\ast \Q)^{\vee}$ is a variation of Hodge structures of weight $-2$, $\mathcal H\subset (R^2\widetilde \pi_\ast \Q)^{\vee}$ is a sub-variation of Hodge structures. 
Since the fibres of $\pi^{can}$ are klt, they have rational singularities, see \cite[Theorem 5.22]{kollar-mori}.  
It therefore follows from Lemma \ref{lem:MHS} that $\mathcal H^{-2,0}=0$ and so $\mathcal H\otimes \C=\mathcal H^{-1,-1}$ is pure of type $(-1,-1)$. 

Since $R^2\pi_{\ast} \Q$ is a local system, the kernel
$$
\mathcal K:=\ker \left( (\tau^{\ast})^{\vee}\colon(R^2\widetilde \pi_\ast \Q)^{\vee}\longrightarrow (R^2\pi_\ast \Q)^{\vee} \right)
$$
is a sub variation of Hodge structures of $\mathcal H$.
Since $\widetilde \pi$ is projective, $\mathcal H$ is polarizable and so we can choose a direct sum decomposition 
\begin{align} \label{eq:K+H'}
\mathcal H=\mathcal K\oplus \mathcal H' ,
\end{align}
where $\mathcal H'\subseteq \mathcal H$ is a certain sub variation of Hodge structures.
In particular, $\mathcal H'$ is pointwise of type $(-1,-1)$ and so it can be identified with a local subsystem of $\mathcal {GN}_1(\mathcal {\widetilde X}/B)$, see Section \ref{subsubsec:monodromy}. 
By construction, $(\tau^\ast)^{\vee}$ acts injectively on this local system and so $\mathcal H'\cong (\tau^\ast)^{\vee} (\mathcal H')$  is isomorphic to a local subsystem of $\mathcal {GN}_1(\mathcal {X}/B)$. 
This implies that $\mathcal H'$ is a trivial local system, because for $t\in B$ very general, the monodromy action on $N_1(X_t)\cong N^1(X_t)^{\vee}$ is trivial by assumptions, see Section \ref{subsubsec:monodromy}. 

Let now $0\in B$ be a point and let $C_0\subset X_0$ be a $(K_{X_0}+\Delta_0)$-trivial curve.
Up to replacing $C_0$ by some positive multiple, we can choose a curve $\widetilde C_0$ on $\widetilde X_0$ with $\tau_\ast \widetilde C_0=C_0$.
The curve $\widetilde C_0$ yields an element in $H^2(\widetilde X_0,\Q)^\vee$ and we note that actually $[\widetilde C_0]\in \mathcal H_0\subset H^2(\widetilde X_0,\Q)^\vee$, because $p_{\ast}( \tau_{\ast}(\widetilde C_0))=0$.
By (\ref{eq:K+H'}), $[\widetilde C_0]$ gives rise to a unique element $\gamma'_0\in \mathcal H_0'$ with $\tau_{\ast}\gamma'_0=[C_0]$ in $N_1(X_0)$.
Since $\mathcal H'$ is trivial, $\gamma_0'$ can be extended to give a global section $\gamma' \in \Gamma(B,\mathcal H')\subseteq \Gamma(B,(R^2\widetilde \pi_{\ast}\Q)^{\vee})$.
By Step 1, this implies that there is a flat family of $1$-cycles $\alpha$ on $\mathcal X$ over $B$ with $\tau_{\ast}\gamma'_b=\alpha_b$ in $N_1(X_b)$ for all $b\in B$.
In particular, $\alpha_0=[C_0]$, which finishes the proof of Step 2.
\end{proof}

Since $N^1(\mathcal X)\longrightarrow N^1(X_t)$ is onto for very general $t\in B$,  there is a natural inclusion
 $\overline{NE}(X_t)\subseteq \overline{NE}(\mathcal X/B)$, which induces an inclusion on the $(K_{\mathcal X}+\Delta)$-trivial parts.
\\

\textbf{Step 3.}
After shrinking $B$, we may assume that, for any very general $t\in B$, the natural inclusion $\overline{NE}(X_t)\subseteq \overline{NE}(\mathcal X/B)$ yields an equality
$$
\overline{NE}(X_t)_{\{K_{X_t}+\Delta_t=0\}}= \overline{NE}(\mathcal X/B)_{\{K_{\mathcal X}+\Delta=0\}}
$$ 
on the $(K_{\mathcal X}+\Delta)$-trivial parts.
Moreover, for any extremal $(K_{\mathcal X}+\Delta)$-trivial ray $R\subset \overline{NE}(\mathcal X/B)$, there is a flat family of curves $\mathcal C\subset \mathcal X$ over $B$, with $[C_b]_{\mathcal X/B}\in R$ for all $b\in B$. 

\begin{proof} 
We start with some well-known preliminary considerations.
Recall first the morphism $p\colon\mathcal X\longrightarrow \mathcal X^{can}$ to the relative log canonical model of $(\mathcal X,\Delta)$ over $B$. 
We then choose an effective $p$-antiample divisor $H$ on $\mathcal X$ and note that up to adding some sufficiently large multiple of the pullback of an ample divisor on $\mathcal X^{can}$, we may assume that $H.C>0$ for any curve $C\subset \mathcal X$ with $p_\ast C\neq 0$. 
It follows that for any $\epsilon >0$, the $K_{\mathcal X}+\Delta$ trivial curves are exactly the $ K_{\mathcal X}+\Delta+\epsilon H$ negative curves.
Since $(\mathcal X,\Delta+\epsilon H)$ is klt for $0<\epsilon <<1$ and $K_{\mathcal X}+\Delta+\epsilon H$ is $\pi$-big, we deduce from the cone theorem \cite[Theorem 3.25(2)]{kollar-mori} that 
there are only finitely many $(K_{\mathcal X}+\Delta)$-trivial extremal rays $R\subset \overline{NE}(\mathcal X/B)$, and for any such extremal ray,  its contraction  $f_R\colon\mathcal X\longrightarrow \mathcal Z$ exists. 

If the exceptional locus $Exc(f_R)$ does not dominate $B$, then the Zariski open and dense subset  $U=B\setminus{\pi(Exc(f_R))}$ of $B$ has the property that $R$ is not contained in $\overline{NE}(\mathcal X_U/U)\subset \overline{NE}(\mathcal X/B)$.
We then replace $B$ by $U$ and repeat this procedure.
We claim that this process terminates, giving rise to a situation where $Exc(f_R)$ dominates $B$ for all $(K_{\mathcal X}+\Delta)$-trivial extremal rays $R\subset \overline{NE}(\mathcal X/B)$.

We prove the claim in the following.   
Fix $\epsilon >0$ so that $(X,\Delta+\epsilon H)$ is klt, where $H$ is as above. 
By our choice of $H$, a curve on $\mathcal X$ is $(K_{\mathcal X}+\Delta+\epsilon H)$-negative, if and only if it is $(K_{\mathcal X}+\Delta)$-trivial.  
Since  $(K_{\mathcal X}+\Delta+\epsilon H)$ is $\pi$-big, it follows from the relative cone theorem that the $(K_{\mathcal X}+\Delta)$-trivial part of $\overline{NE}(\mathcal X/B)$ is a polyhedral cone with finitely many extremal rays.
%
Moreover, any $(K_{\mathcal X}+\Delta)$-trivial extremal ray $R$ of $\overline{NE}(\mathcal X/B)$ contains a rational curve $C\in R$ with $-2\dim(\mathcal X)\leq (K_{\mathcal X}+\Delta+\epsilon H).C <0$, 
 see \cite[Theorem 3.25]{kollar-mori}.
It follows that the intersection of that cone with the subset of points $D \in N_1(\mathcal X/B)$ given by  the condition $-2\dim(\mathcal X)\leq (K_{\mathcal X}+\Delta+\epsilon H).D <0$ is a bounded domain in $\overline{NE}(\mathcal X/B)$, and so it contains only finitely many integral points. 
This shows that the above process terminates, and so, after shrinking $B$, we may assume that for any $(K_{\mathcal X}+\Delta)$-trivial extremal ray $R\subset \overline{NE}(\mathcal X/B)$, the exceptional locus $Exc(f_R)$ dominates $B$.
This implies in particular that the natural inclusion $\overline{NE}(X_t)_{\{K_{X_t}+\Delta_t=0\}}\subseteq \overline{NE}(\mathcal X/B)_{\{K_{\mathcal X}+\Delta=0\}}$ is an equality.

The relative Hilbert scheme of $\mathcal X$ over $B$ exists as a union of countably many projective schemes over $B$, see for instance \cite[Chapter I]{kollar-rationalcurves}.
For any curve $C$ on a very general fibre $X_t$ of $\pi$, there is therefore a finite covering $B'\longrightarrow B$, and a flat family of curves $\mathcal C'\subset \mathcal X':= \mathcal X\times _BB'$ over $B'$, whose fibre above some $t'\in B'$ which maps to $t\in B$ coincides with $C$.
The natural map $h:\mathcal X'\longrightarrow \mathcal X$ is finite (hence proper), and we may thus define the pushforward $\mathcal C:=h_{\ast}\mathcal C'$.
Since the natural monodromy action on $N_1(X_t)$ is trivial (see Section \ref{subsubsec:monodromy}), $[C_t]$ is a positive multiple of $[C]$.
Moreover, by generic flatness, $\mathcal C$ is flat over some Zariski open and dense subset of $B$.
Since for very general $t\in B$,
$$
\overline{NE}(X_t)_{\{K_{X_t}+\Delta_t=0\}}= \overline{NE}(\mathcal X/B)_{\{K_{\mathcal X}+\Delta=0\}} ,
$$
and since this cone has only finitely many $(K_{\mathcal X}+\Delta)$-trivial extremal rays, the above argument shows that, after possibly shrinking $B$, we may assume the following: 
for any $(K_{\mathcal X}+\Delta)$-trivial extremal ray $R\subset \overline{NE}(\mathcal X/B)$, there is a flat family of curves  $\mathcal C\subset \mathcal X$ over $B$, with  $[C_b]_{\mathcal X/B}\in R$ for all $b\in B$. 
This concludes Step 3. 
\end{proof}

It follows from Step 3 that $\overline {NE}(\mathcal X/B)$ is stable under shrinking $B$ further.
Moreover, as we have mentioned in the proof of Step 3, $\overline {NE}(\mathcal X/B)$ has only finitely many $(K_{\mathcal X}+\Delta)$-trivial extremal rays.
After shrinking $B$, we may therefore assume that for any $(K_{\mathcal X}+\Delta)$-trivial extremal ray $R\subset \overline{NE}(\mathcal X/B)$, the restriction $f_R|_{X_b} \colon X_b\longrightarrow Z_b$ of the contraction $f_R \colon \mathcal X\longrightarrow \mathcal Z$ of $R$, to the fibre above $b$, has connected fibres and that $Z_b$ is normal.

From now on, we do not perform any more base changes. 
\\

\textbf{Step 4.} 
Let $0\in B$ be a point and let $C_0\subset X_0$ be a $(K_{X_0}+\Delta_0)$-trivial curve which spans an extremal ray of $\overline{NE}(X_0)$. 
Then there is a flat family of (possibly reducible) curves $\mathcal D\subset \mathcal X$ over $B$ such that the fibre $D_b$ above $b\in B$ satisfies 
\begin{enumerate}
\item $[D_0]\in [C_0]\cdot \R_{>0}$ in $N_1(X_0)$;  
\item for very general $t\in B$, $[D_t]$ spans an extremal ray of $\overline{NE}(X_t)$; \label{item:step4:2}
\item for all $b\in B$, $[D_b]_{\mathcal X/B}$ spans an extremal ray of $\overline{NE}(\mathcal X/B)$.
\end{enumerate}

\begin{proof}
The curve $C_0\subset X_0$ gives rise to an element $[C_0]_{\mathcal X/B}\in \overline{NE}(\mathcal X/B)$ in the relative cone.
Since $C_0$ is $(K_{\mathcal X}+\Delta)$-trivial and $K_{\mathcal X}+\Delta$ is $\pi$-nef, we can write
\begin{align} \label{eq:NE(X/B)}
[C_0]_{\mathcal X/B}=\sum_{i=1}^r a_i [D_i]_{\mathcal X/B} ,
\end{align}
where $D_i\subset \mathcal X$ are curves contracted by $\pi$ which span pairwise different extremal $(K_{\mathcal X}+\Delta)$-trivial rays of $\overline{NE}(\mathcal X/B)$, and $a_i\geq 0$ are nonnegative rational numbers.

By Step 3, we may assume that there are flat families of curves $\mathcal D_i\subset \mathcal X$ over $B$ such that for any $b\in B$, the class of the fibre $[D_{i,b}]_{\mathcal X/B}\in \overline{NE}(\mathcal X/B)$ coincides with $[D_i]_{\mathcal X/B}$. 
We claim that this implies
\begin{align} \label{eq:NE(X)}
[C_0]=\sum_{i=1}^r a_i [D_{i,0}] 
\end{align}
in $N_1(X_0)$.
In order to prove (\ref{eq:NE(X)}), we need to see that both sides have the same intersection numbers with all line bundles on $X_0$.
By (\ref{eq:NE(X/B)}), this is true for those line bundles that extend to $\mathcal X$, that is, for the subspace $\iota^{\ast}N^1(\mathcal X)\subset N^1(X_0)$, where $\iota \colon X_0\longrightarrow \mathcal X$ denotes the inclusion.
By \cite[Corollary 12.2.9]{KM92}, the subspaces  $\iota^{\ast}N^1(\mathcal X)\subset N^1(X_0)$ and $\mathcal{GN}_1(\mathcal X/B)_0\subset N_1(X_0)$ are dual to each other, cf.\ Section \ref{subsubsec:monodromy}.
We thus obtain a decomposition
$$
N^1(X_0)= \iota^{\ast} N^1(\mathcal X)\oplus (\mathcal{GN}_1(\mathcal X/B)_0)^{\perp},
$$
where $(\mathcal{GN}_1(\mathcal X/B)_0)^{\perp}\subset N^1(X_0)$ denotes the subset of all classes that pair to zero with $\mathcal{GN}_1(\mathcal X/B)_0$, the space of  curves on $X_0$ that extend to flat families of $1$-cycles over $B$.
By Step 2, $[C_0]\in \mathcal{GN}_1(\mathcal X/B)_0$ and so both sides in (\ref{eq:NE(X)}) have zero intersection with $ (\mathcal{GN}_1(\mathcal X/B)_0)^{\perp}$, which concludes the proof of (\ref{eq:NE(X)}).

Since $[C_0]\cdot \R_{>0}$ is an extremal ray of $\overline {NE}(X_0)$, it follows from Lemma \ref{lem:KM92} that in equation (\ref{eq:NE(X)}) at most one coefficient $a_i$ is nonzero.
We may therefore without loss of generality assume $r=1$ and $a_1>0$.
The flat family of curves $\mathcal D:=\mathcal D_1\subset \mathcal X$ over $B$ satisfies therefore $[D_0]\in [C_0]\cdot \R_{>0}$ in $N_1(X_0)$ and for any $b\in B$, 
$
[D_b]_{\mathcal X/B}=[D_1]_{\mathcal X/B}
$ 
spans an extremal ray of $\overline {NE}(\mathcal X/B)$.
For very general $t\in B$, it remains to prove that $D_t$ spans an extremal ray of $\overline{NE}(X_t)$.
This follows immediately from the fact that $D_t$ spans an extremal ray of $\overline{NE}(\mathcal X/B)$ and the assumption that any element of $N^1(X_t)$ lifts to an element of $N^1(\mathcal X)$.
This concludes Step 4.
\end{proof}

\textbf{Step 5.} 
Let $0\in B$ be a point and let $C_0\subset X_0$ be a $(K_{X_0}+\Delta_0)$-trivial curve which spans an extremal ray of $\overline{NE}(X_0)$. 
Let $\mathcal D\subset \mathcal X$ be the corresponding flat family of curves from Step 4.
Then, 
\begin{enumerate}
\item  for all $b\in B$, $[D_b]$ spans an extremal ray of $\overline{NE}(X_b)$; 
\item  if $F \colon \mathcal X\longrightarrow \mathcal Z$ denotes the contraction of the extremal ray $[D_0]_{\mathcal X/B}\cdot \R_{>0}\subset \overline{NE}(\mathcal X/B)$, then, for all $b\in B$, the restriction $F_b:=F|_{X_b}$ of $F$ is the contraction of the extremal ray $[D_b]\cdot \R_{>0}\subset \overline{NE}(X_b)$.
\end{enumerate}

\begin{proof}
For a contradiction, suppose that there is a point $b_0\in B$ such that $[D_{b_0}]\in N_1(X_{b_0})$ does not span an extremal ray.
Then the class $[D_{b_0}]$ can be written as a positive linear combination of at least two effective $(K_{X_{b_0}}+\Delta_{b_0})$-trivial curves on $X_{b_0}$ that span pairwise different extremal rays of $\overline{NE}(X_{b_0})$.
Applying Step 4 to all these extremal curves shows that their classes in $N_1(X_{b_0})$ extend to flat families of curves over $B$.
It therefore follows from Lemma \ref{lem:KM92} that for any $b\in B$, $[D_b]$ does not span an extremal ray of $\overline{NE}(X_{b})$.
This contradicts item (\ref{item:step4:2}) of Step 4.

Let now $F \colon \mathcal X\longrightarrow \mathcal Z$ denote the contraction of the extremal ray $[D_0]_{\mathcal X/B}\cdot \R_{>0}\subset \overline{NE}(\mathcal X/B)$.
As explained above Step 4, for any $b\in B$, $Z_b$ is normal and $F_b \colon X_b\longrightarrow Z_b$ has connected fibres.
Let $b_0\in B$, and let $C\subset X_{b_0}$ be a curve contracted by $F_{b_0}$ and which spans an extremal ray of $\overline{NE}(X_{b_0})$.
Then, by Step 4, there is a flat family of curves $\mathcal C\subset \mathcal X$ over $B$, with $[C_{b_0}]\in [C]\cdot \R_{>0}$ in $N_1(X_{b_0})$.
This implies $[C_{b_0}]_{\mathcal X/B}\in [C]_{\mathcal X/B}\cdot \R_{>0}$ in $N_1(\mathcal X/B)$.
Since $\mathcal C$ is flat over $B$, we conclude that for any $b\in B$, the curve $C_b$ satisfies
$$
[C_b]_{\mathcal X/B}\in [C]_{\mathcal X/B}\cdot \R_{>0}=[D_b]_{\mathcal X/B}\cdot \R_{>0}
$$
in $N_1(\mathcal X/B)$, where the last equality follows from the fact that $C$ is contracted by $F$. 
In particular, for very general $t\in B$, $ [C_t]_{\mathcal X/B} \in [D_t]_{\mathcal X/B}\cdot \R_{>0}$.
Since $N^1(\mathcal X)\longrightarrow N^1(X_t)$ is onto for very general $t\in B$, $[C_t]\in [D_t]\cdot \R_{>0}$ in $N_1(X_t)$.
It then follows from Lemma \ref{lem:KM92} that $[C_b]\in [D_b]\cdot \R_{>0}$ in $N_1(X_b)$ for all $b\in B$.
In particular, $[C]\cdot \R_{>0}= [D_{b_0}]\cdot \R_{> 0}$, because $[C_{b_0}]$ is contained in both rays.  
We have thus proven that any extremal curve contracted by $F_{b_0}$ is numerically proportional to $D_{b_0}$ and so $F_{b_0}$ is indeed the contraction of the extremal ray $[D_{b_0}]\cdot \R_{> 0}$, as we want. 
This finishes the proof of Step 5. 
\end{proof}

Proposition \ref{prop:raydeformation} follows immediately from Steps 4 and 5; 
item (\ref{item:prop:raydef:F}) follows thereby from the upper semicontinuity of fibre dimensions applied to  $Exc(F)\longrightarrow B$, where $Exc(F)$ denotes the exceptional locus of $F$.
\end{proof}

\begin{corollary} \label{cor:prop:raydef:F}
In the notation of Proposition \ref{prop:raydeformation}, for any $b\in U$ the natural map
$$
\overline{NE}(X_b)\longrightarrow \overline{NE}(\mathcal X_U/U)
$$
yields a one to one correspondence between the $(K_{X_b}+\Delta_b)$-trivial extremal faces of $X_b$ with the $(K_{\mathcal X_U}+\Delta_U)$-trivial extremal faces of $\overline{NE}(\mathcal X_U/U)$.
Moreover, if $F\colon\mathcal X_U\longrightarrow \mathcal Z$ denotes the contraction of a $(K_{\mathcal X_U}+\De_U)$-trivial
 extremal face, then the following holds:
 \begin{enumerate}
 \item for all $b\in U$, the restriction $F_b:=F|_{X_b}$ of $F$ is the contraction of the corresponding extremal face of $\overline{NE}(X_b)$; \label{item:cor:1}
 \item 
$\mathcal Z$ is $\Q$-factorial, if and only if one fibre $Z_b$ is $\Q$-factorial, if and only if all fibres $Z_b$ with $b\in U$ are $\Q$-factorial. \label{item:cor:2} 
 \end{enumerate} 
\end{corollary}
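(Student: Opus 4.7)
The strategy is to bootstrap from Proposition \ref{prop:raydeformation}, which handles a single extremal ray, to the statement about faces and their contractions. For part (a), recall from Step 3 of the proof of Proposition \ref{prop:raydeformation} that both $\overline{NE}(\mathcal X_U/U)_{\{K_{\mathcal X_U}+\Delta_U=0\}}$ and, fibrewise, $\overline{NE}(X_b)_{\{K_{X_b}+\Delta_b=0\}}$ are rational polyhedral cones with finitely many extremal rays, and that Steps 4--5 produce a bijection between these extremal rays via flat families of curves over $U$. Since the extremal faces of a rational polyhedral cone are precisely the convex hulls of subsets of its extremal rays, the ray-bijection extends uniquely to a bijection between the $(K+\Delta)$-trivial extremal faces of the two cones; this proves (a).

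For part (b)(1), let $F\colon \mathcal X_U \to \mathcal Z$ be the contraction of a face $\tilde{\mathcal F}$ and let $\mathcal F_b \subset \overline{NE}(X_b)$ be the corresponding face under the bijection of (a). After the shrinking of $U$ carried out in Proposition \ref{prop:raydeformation}, each restriction $F|_{X_b}$ is already known to have connected fibres onto a normal variety $Z_b$. A curve $C \subset X_b$ is contracted by $F|_{X_b}$ if and only if $[C]_{\mathcal X_U/U}\in \tilde{\mathcal F}$; applying Step 5 of Proposition \ref{prop:raydeformation} to each extremal ray of $\mathcal F_b$, this is equivalent to $[C]\in \mathcal F_b$. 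Since a face contraction is determined by the set of curves it contracts, $F|_{X_b}$ is the contraction of $\mathcal F_b$.

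For part (b)(2), the plan is to reduce to the case of a single extremal ray. Factor $F$ over $U$ as a sequence $F = F_k \circ \cdots \circ F_1$ of contractions of single $(K+\Delta)$-trivial extremal rays of $\tilde{\mathcal F}$, obtained by running a relative MMP for $(\mathcal X_U,\Delta_U)$ over $U$. At each intermediate step, a divisorial contraction preserves $\Q$-factoriality (and terminal singularities), and then Proposition \ref{prop:raydeformation} can be reapplied after a further shrinking of $U$; a small contraction, on the contrary, makes the target fail to be $\Q$-factorial, and no subsequent divisorial step can restore that property. By Proposition \ref{prop:raydeformation}(\ref{item:prop:raydef:F}), the small/divisorial alternative for each ray contraction is constant over $U$. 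Combined with (b)(1), we conclude that $\mathcal Z$ is $\Q$-factorial iff every $F_i$ is divisorial iff the analogous fibrewise statement holds iff some (equivalently every) $Z_b$ with $b \in U$ is $\Q$-factorial.

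The main obstacle is making the relative factorization in (b)(2) rigorous: one must verify that the intermediate relative varieties remain terminal $\Q$-factorial after each divisorial step so that Proposition \ref{prop:raydeformation} applies again, and that the relative notion of ``small contraction'' matches the fibrewise one, so that Proposition \ref{prop:raydeformation}(\ref{item:prop:raydef:F}) indeed transports the small/divisorial dichotomy from one fibre to all of $U$. This last point uses that, by Step 3 of the proof of Proposition \ref{prop:raydeformation}, the exceptional locus of each relative ray contraction dominates $U$, so fibrewise codimensions of exceptional loci agree with the relative codimension.
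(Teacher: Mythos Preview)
Your treatment of part (a) and part (b)(1) matches the paper's: both are immediate from Proposition \ref{prop:raydeformation}, with (b)(1) requiring the Step~5 style argument you indicate.

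The difficulty is in (b)(2). Your plan is to factor $F$ into extremal ray contractions $F_k\circ\cdots\circ F_1$ and to reapply Proposition \ref{prop:raydeformation} at each intermediate stage. You correctly flag as ``the main obstacle'' that the intermediate pairs must remain terminal $\Q$-factorial for this. Unfortunately this is not just an obstacle but is actually false: a $(K_{\mathcal X}+\Delta)$-\emph{trivial} divisorial contraction does \emph{not} preserve terminality of the pair. If $E$ is the contracted divisor and $(\mathcal X',\Delta')$ the target, then $K_{\mathcal X}+\Delta=\psi^\ast(K_{\mathcal X'}+\Delta')$ forces $a(E,\mathcal X',\Delta')=-\operatorname{mult}_E(\Delta)\le 0$, so $(\mathcal X',\Delta')$ is only canonical (or worse, if $E\subset\operatorname{Supp}\Delta$). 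Proposition \ref{prop:raydeformation}, however, requires terminal $\Q$-factorial fibres, so your induction breaks at the very first divisorial step. A related problem: once some $F_i$ is small, the next stage is not $\Q$-factorial either, and again Proposition \ref{prop:raydeformation} is unavailable; your claim that ``no subsequent step can restore $\Q$-factoriality'' then also needs an independent argument (it amounts to counting exceptional divisors against the dimension of the face, which is essentially Lemma \ref{lem:Q-factorial}).

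The paper avoids the factorization entirely. It proves a direct criterion (Lemma \ref{lem:Q-factorial}): for a contraction of an $r$-dimensional face $\mathcal F$, the target is $\Q$-factorial if and only if there exist exceptional divisors $E_1,\dots,E_r$ inducing an isomorphism $\langle E_1,\dots,E_r\rangle\to\langle\mathcal F\rangle^\vee$. Since, after shrinking $U$, every exceptional divisor of $F$ is flat over $U$, and since by (b)(1) each $F_b$ is the contraction of the corresponding $r$-dimensional face of $\overline{NE}(X_b)$, this numerical criterion reads identically for $F$ and for each $F_b$. That gives the three-way equivalence in one stroke, without ever leaving the terminal $\Q$-factorial family $(\mathcal X_U,\Delta_U)$. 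You should replace your inductive scheme with this argument.
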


\begin{proof}
The first part of the corollary is immediate from Proposition \ref{prop:raydeformation}.
Item (\ref{item:cor:1}) follows by a similar argument as in Step 5 of the proof of Proposition \ref{prop:raydeformation}, and we leave the details to the reader.
Finally, item (\ref{item:cor:2}) follows from Lemma \ref{lem:Q-factorial} below and the fact that after shrinking $U$ we may assume that all exceptional divisors of $F$ are flat over $B$.
\end{proof}

\begin{lemma}\label{lem:Q-factorial}
Let $(X,\De)$ be a $\Q$-factorial klt pair and let $f \colon X \longrightarrow Y$ be a contraction of a $(K_X + \De)$-negative extremal face $\mathcal F \subset \overline{NE}(X)$. 
Let $\left\langle \mathcal F \right\rangle \subset N_1(X)$ be the linear subspace generated by $\mathcal F$ and assume it has dimension $r$. 
Then $Y$ is $\Q$-factorial if and only if there are exceptional divisors $E_1, \ldots, E_r$ such that the natural map
$$
\left\langle E_1, \ldots, E_r \right\rangle \longrightarrow \left\langle \mathcal F \right\rangle^\vee
$$
is an isomorphism.
\end{lemma}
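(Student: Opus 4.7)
The plan is to base the argument on the short exact sequence
$$0 \longrightarrow f^* N^1(Y)_{\Q} \longrightarrow N^1(X)_{\Q} \xrightarrow{\ \phi\ } \langle \mathcal F \rangle^\vee \longrightarrow 0,$$
where $\phi$ sends the class of a $\Q$-Cartier divisor $D$ to the functional $\gamma \mapsto D\cdot\gamma$ on $\langle \mathcal F \rangle$. First, I would establish this sequence: surjectivity of $\phi$ expresses that the relative Picard number of $f$ equals $r=\dim \langle \mathcal F\rangle$, and the identification of the kernel with $f^*N^1(Y)_{\Q}$ uses the relative contraction theorem for $(K_X+\De)$-negative extremal faces, which says that a $\Q$-Cartier divisor $D$ on $X$ with $D\cdot \mathcal F=0$ is $\Q$-linearly equivalent to the pullback of a $\Q$-Cartier divisor on $Y$.

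For the ``$\Leftarrow$'' implication, let $D$ be a prime Weil divisor on $Y$ and let $\tilde D$ be its strict transform on $X$. Since $X$ is $\Q$-factorial, $\tilde D$ is $\Q$-Cartier, and by the hypothesis on $E_1,\dots,E_r$ we may write $\phi(\tilde D)=\sum a_i \phi(E_i)$ for some $a_i\in\Q$. Then $\tilde D-\sum a_i E_i\in\ker\phi$, hence it is $\Q$-linearly equivalent to $f^*L$ for some $\Q$-Cartier divisor $L$ on $Y$. Pushing forward with $f_*$ and using $f_*E_i=0$ yields $D\sim_{\Q} L$, proving $D$ is $\Q$-Cartier and therefore $Y$ is $\Q$-factorial.

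For the ``$\Rightarrow$'' implication, take any $\Q$-Cartier divisor $D$ on $X$ representing an arbitrary class in $N^1(X)_{\Q}$. Since $Y$ is $\Q$-factorial, $f_*D$ is $\Q$-Cartier on $Y$, so $f^*f_*D$ is a well-defined $\Q$-Cartier divisor on $X$. The difference $D-f^*f_*D$ is $\Q$-Cartier and pushes forward to zero; hence its codimension-one support is contained in the exceptional locus of $f$, so $D-f^*f_*D=\sum c_i E_i$ as Weil divisors (and thus as classes in $N^1(X)_{\Q}$, since $X$ is $\Q$-factorial), where the $E_i$ are the exceptional prime divisors of $f$ and $c_i\in\Q$. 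Applying $\phi$ gives $\phi(D)=\sum c_i\phi(E_i)$, so since $\phi$ is surjective and $D$ was arbitrary, the classes $\phi(E_i)$ span $\langle \mathcal F \rangle^\vee$; one then selects $r$ of the exceptional prime divisors whose images form a basis, yielding the desired isomorphism.

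The main obstacle will be the careful handling of the different equivalence relations between the divisor groups: in the ``$\Leftarrow$'' direction one has to upgrade numerical $f$-triviality to actual $\Q$-linear equivalence with a pullback (invoking the contraction theorem in its $(K_X+\De)$-negative form, \emph{not} in a merely numerical guise), and in the ``$\Rightarrow$'' direction one must ensure that the Weil-divisor identity $D-f^*f_*D=\sum c_i E_i$ lifts to an equality of classes in $N^1(X)_{\Q}$, which rests squarely on the $\Q$-factoriality of $X$.
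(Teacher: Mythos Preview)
Your proof is correct and follows essentially the same route as the paper. Both directions hinge on the same two ingredients: the identification $f^*N^1(Y)=\langle\mathcal F\rangle^{\perp}$ coming from the contraction theorem, and, for the ``$\Rightarrow$'' direction, the observation that $D-f^*f_*D$ is supported on the exceptional locus when $Y$ is $\Q$-factorial. The only cosmetic difference is that the paper applies the map $D\mapsto f^*f_*D-D$ directly to a chosen basis $H_1,\dots,H_r$ of a complement of $f^*N^1(Y)$ (obtaining possibly non-prime exceptional $E_i$), whereas you show the prime exceptional divisors already span $\langle\mathcal F\rangle^\vee$ and then extract $r$ of them; this yields a marginally sharper conclusion but is not a genuinely different argument.
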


\begin{proof} 
Assume that $Y$ is $\Q$-factorial. By the cone theorem we have that $f^*N^1(Y)=\left\langle \mathcal F \right\rangle^{\perp}$ has codimension $r$ in $N^1(X)$.  
Let $H_1, \ldots, H_r$ denote a basis of a complement.  
By the $\Q$-factoriality of $Y$, we can define $E_i:=f^*f_*H_i-H_i$. 
Note that each $E_i$ is exceptional and $E_1,\ldots ,E_r$ still generate a complement to $f^*N^1(Y)$. 
Hence,
$\left\langle E_1, \ldots, E_r \right\rangle \longrightarrow \left\langle \mathcal F \right\rangle^\vee$ is an isomorphism, as we want.

Conversely, suppose that there are exceptional divisors $E_1,\ldots, E_r$ such that the natural map $\left\langle E_1, \ldots, E_r \right\rangle \longrightarrow \left\langle \mathcal F \right\rangle^\vee$ is an isomorphism. 
If $D'$ is a Weil divisor on $Y$, denote by $D=f_*^{-1}D'$ its proper transform on $X$. 
Then by our assumptions there are rational numbers $a_1, \ldots,a_r$ such that $L:=D-\sum_i a_iE_i$ is a divisor which is trivial on $\left\langle \mathcal F \right\rangle$. 
By the cone theorem, $L=f^*D''$ for some $\Q$-Cartier divisor $D''$ on $Y$. Since each $E_i$ is exceptional, $D' = D''$ as we want. 
\end{proof}

\begin{remark} \label{rem:raydeformation}
In the case where $\Delta=0$, Koll\'ar and Mori proved that flops of threefolds deform in families.
That is, for any family of terminal threefolds $\mathcal X\longrightarrow S$ over the germ of a complex space $0\in S$, any flop $X_0\dashrightarrow X^+_0$ of the central fibre extends to a flop $\mathcal X\dashrightarrow \mathcal X^+$ of families, see \cite[Theorem 11.10]{KM92}. 
This result does not generalize to higher dimensions.
For instance, one can use the local Torelli theorem to construct families of hyperk\"ahler fourfolds where the fibre over a special point admits a flop which does not deform because the corresponding curve class does not deform to an algebraic cohomology class on nearby fibres.
Taking branched coverings, one can exhibit similar examples among families of varieties of general type. 
Therefore, the base change in Proposition \ref{prop:raydeformation} is necessary. 

The above mentioned examples show furthermore that there are smooth families of varieties of general type, such that the number of marked minimal models of the fibres is not a locally constant function on the base.  
In particular, the number of marked minimal models is not a topological invariant. 
\end{remark}

\section{Extending sequences of flops from one fibre to a family}

Consider a family $\pi:(\mathcal X,\Delta) \longrightarrow B$ as in Proposition \ref{prop:raydeformation}.
After shrinking $B$, we know by Proposition \ref{prop:raydeformation} and \cite{BCHM} that we can extend any flop of a fibre $(X_b,\Delta_b)$ to a flop of the total space.
We extend this to sequences of flops in Proposition \ref{prop:family} below; the proofs use some ideas from \cite[Section 5]{schreieder-tasin}. 

\begin{lemma} \label{lem:flopextension}
Let $\pi:(\mathcal X,\Delta) \longrightarrow B$ be a family of terminal pairs as in Proposition \ref{prop:raydeformation}.
If $g:(\mathcal X_\eta,\Delta_\eta)\dashrightarrow (\mathcal X_\eta^+,\Delta_\eta^+)$ is a flop of the generic fibre, then after shrinking $B$, $g$ extends to a flop of $(\mathcal X,\Delta) $ over $B$.
\end{lemma}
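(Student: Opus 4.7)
The plan is to realize $g$ as the flop associated to an extremal $(K_{\mathcal X}+\Delta)$-trivial ray of $\overline{NE}(\mathcal X/B)$ provided by Proposition \ref{prop:raydeformation}, and then to construct the flopped family as the output of the relative MMP over the target of the contraction.

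By definition, $g$ corresponds to a $(K_{\mathcal X_\eta}+\Delta_\eta)$-trivial extremal ray $R_\eta\subset \overline{NE}(\mathcal X_\eta)$ with small contraction $f_\eta\colon\mathcal X_\eta\to \mathcal Z_\eta$ and to a $\Q$-Cartier divisor $H_\eta$ on $\mathcal X_\eta$ such that $-H_\eta$ is $f_\eta$-ample and $g_*H_\eta$ is $f_\eta^+$-ample. Pick a curve $C_\eta$ spanning $R_\eta$; its Zariski closure specializes on a very general closed fibre $X_b$ to a curve $C_b$. Applying Proposition \ref{prop:raydeformation} to the extremal ray $[C_b]\cdot \R_{>0}\subset \overline{NE}(X_b)$, we obtain, after shrinking $B$, a flat family of curves $\mathcal C\subset \mathcal X$ over $B$ spanning an extremal $(K_{\mathcal X}+\Delta)$-trivial ray $\mathcal R\subset \overline{NE}(\mathcal X/B)$. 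By Corollary \ref{cor:prop:raydef:F}, the contraction $F\colon\mathcal X\to \mathcal Z$ of $\mathcal R$ exists over $B$, is small on every fibre (by item (\ref{item:prop:raydef:F}) of Proposition \ref{prop:raydeformation}, since $f_\eta$ is small), and restricts to $f_\eta$ on the generic fibre.

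Extend $H_\eta$ to a $\Q$-Cartier divisor $H$ on $\mathcal X$ by Zariski closure; after further shrinking $B$, we may assume that $-H$ is $F$-ample, and by adding a pullback from $B$ we may arrange that $H$ is effective and that $(\mathcal X,\Delta+\epsilon H)$ is klt for some small $\epsilon>0$. Then $K_{\mathcal X}+\Delta+\epsilon H$ is $\pi$-big and $F$-antiample, so by \cite{BCHM} the relative $(K_{\mathcal X}+\Delta+\epsilon H)$-MMP over $\mathcal Z$ exists and terminates. Since $F$ is small, each step is the flip of an extremal subray of $\mathcal R$; after a single flip the proper transform of $\epsilon H$ becomes $F^+$-ample while $K_{\mathcal X^+}+\Delta^+$ remains $F^+$-trivial, so the MMP terminates immediately with a birational map $G\colon(\mathcal X,\Delta)\dashrightarrow(\mathcal X^+,\Delta^+)$ over $B$. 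Together with the family of curves $\mathcal C$, the map $G$ satisfies the conditions of Definition \ref{def:familyflop}, and its restriction to the generic fibre is a flop of $R_\eta$ with distinguished divisor $H_\eta$, hence coincides with $g$ by uniqueness of flops (\cite[Corollary 6.4]{kollar-mori}).

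The principal technical obstacle will be in carrying out the relative MMP: ensuring that the perturbation $(\mathcal X,\Delta+\epsilon H)$ is klt and relatively $\Q$-factorial, and that the resulting single flip of families restricts to the prescribed flop on every closed fibre. The relative $\Q$-factoriality and klt perturbation are handled by further shrinking $B$ and applying standard openness statements, while the compatibility on each closed fibre follows from Corollary \ref{cor:prop:raydef:F} combined with the fact that $\mathcal R$ restricts to the correct extremal ray on each fibre.
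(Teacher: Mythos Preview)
Your approach is sound but takes a more roundabout path than the paper. The paper simply spreads out all the data of the flop from the generic fibre: the contraction $f_\eta$, the divisor $H_\eta$, and the rational map $g$ itself extend over a dense open of $B$, yielding $F$, $H$, and $G$ directly; Proposition \ref{prop:raydeformation} is then invoked only to verify that $F$ contracts a relative extremal ray, after which it is immediate that $G$ is a flop of families. You instead pass through a very general closed fibre, use Proposition \ref{prop:raydeformation} to produce the relative extremal ray and its contraction, and then \emph{reconstruct} the flop $G$ by running a $(K_{\mathcal X}+\Delta+\epsilon H)$-MMP over $\mathcal Z$. Both arguments hinge on Proposition \ref{prop:raydeformation}, but the paper's route is shorter because it never has to build $\mathcal X^+$---it already exists over $\eta$ and spreads out. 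Two points in your write-up would benefit from tightening: (i) you assert without comment that the specialization $[C_b]$ spans an \emph{extremal} ray of $\overline{NE}(X_b)$; this is true but deserves a line, e.g.\ via the identification of $(K+\Delta)$-trivial extremal rays in $\overline{NE}(\mathcal X_\eta)$, $\overline{NE}(\mathcal X/B)$, and $\overline{NE}(X_t)$ for very general $t$ that is established in Step 3 of the proof of Proposition \ref{prop:raydeformation}; (ii) adding a pullback from $B$ cannot make $H$ effective---what you want is to add $F^\ast A$ for $A$ sufficiently ample on $\mathcal Z$, which is $F$-trivial and hence preserves the $F$-antiampleness of $H$.
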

\begin{proof}
Let 
$$
\xymatrix{
\mathcal X_\eta \ar@{-->}[rr]^g  \ar[dr]_f & & \mathcal X_\eta^+\ar[dl]^{f^+} \\
& \mathcal Z_\eta 
}
$$
be the commutative diagram associated to the flop, and let $-H_\eta$ be an $f$-ample $\Q$-Cartier divisor on $\mathcal X_\eta$ such that $g_\ast H_\eta$ is an $f^+$-ample $\Q$-Cartier divisor.

After shrinking $B$, we may assume that the contraction $f:\mathcal X_\eta\longrightarrow \mathcal Z_\eta$, the relatively ample $\Q$-Cartier divisor $-H_\eta$ and the rational map $g$ extend over $B$.
We thus obtain a contraction $F:\mathcal X\longrightarrow \mathcal Z$, a $\Q$-Cartier divisor $H$ on $\mathcal X$ and a rational map $G:(\mathcal X,\Delta)\dashrightarrow (\mathcal X^+,\Delta^+)$.  
After shrinking $B$, we may by Proposition \ref{prop:raydeformation} assume that $F$ contracts a relative extremal ray.
It is then easy to see that $G$ defines a flop of $(\mathcal X,\Delta)$ over $B$.
By definition, $G$ restricts to $g$ on the generic fibre.
This concludes the lemma.
\end{proof}

\begin{proposition} \label{prop:family}
Let $\pi:(\mathcal X,\Delta) \longrightarrow B$ be a family of terminal pairs as in Proposition \ref{prop:raydeformation}.
Up to replacing $B$ by some sufficiently small Zariski open and dense subset, the following holds:
\begin{enumerate}
\item there are finitely many sequences of flops \label{item:prop:1} 
$
\phi^k:(\mathcal X,\Delta)\dashrightarrow (\mathcal X^k,\Delta^k)
$
over $B$, $k=1,\dots ,r$, such that each of the families $(\mathcal X^k,\Delta^k)\longrightarrow B$ satisfies the conclusion of Proposition \ref{prop:raydeformation} with $U=B$.
\item for any $b\in B$ and for any sequence of flops \label{item:prop:2} 
$
g:(X^k_b,\Delta^k_b)\dashrightarrow ((X^k_b)^+,(\Delta_b^k)^+),
$ 
there is some index $j$ such that $\phi^j\circ(\phi^k)^{-1}$ restricts to $g$ above $b\in B$. 
\end{enumerate} 
\end{proposition}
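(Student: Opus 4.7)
The plan is to exhaust the finite list of marked minimal models of the generic fibre, extend each one to a sequence of family flops over $B$, and then show that every fibrewise sequence of flops arises from this list.

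For part (1), I would start at the generic fibre. By \cite{BCHM}, the pair $(X_\eta, \Delta_\eta)$ admits only finitely many marked minimal models $(X^k_\eta, \phi^k_\eta)$, for $k = 1, \ldots, r$. By Theorem \ref{thm:minmod-connected}, each $\phi^k_\eta$ decomposes into a composition of flops. Applying Lemma \ref{lem:flopextension} iteratively (one shrinking of $B$ for every flop in every sequence) extends each $\phi^k_\eta$ to a sequence of flops of families $\phi^k : (\mathcal X, \Delta) \dashrightarrow (\mathcal X^k, \Delta^k)$ over $B$. Since flops preserve terminality, $\Q$-factoriality, and the bigness and nefness of $K+\Delta$, and since the surjectivity $N^1(\mathcal X^k) \to N^1(X^k_t)$ for very general $t$ is inherited from $\mathcal X$ through the birational identification, Proposition \ref{prop:raydeformation} applies to each $(\mathcal X^k, \Delta^k) \to B$; after finitely many further shrinkings we may assume $U = B$ in its conclusion for each $k$. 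This gives (1).

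For part (2), fix $b \in B$, an index $k$, and a sequence of flops $g = g_m \circ \cdots \circ g_1$ at $(X^k_b, \Delta^k_b)$. I would first lift $g$ to a sequence of family flops: by Proposition \ref{prop:raydeformation} (applied to $(\mathcal X^k, \Delta^k)$ with $U = B$), the extremal $(K_{X^k_b}+\Delta^k_b)$-trivial ray contracted by $g_1$ extends to a relative extremal ray of $\overline{NE}(\mathcal X^k / B)$, whose contraction over $B$ restricts fibrewise to the contraction of $g_1$ by Corollary \ref{cor:prop:raydef:F}. The corresponding family flop exists (its construction reduces to a relative MMP for an auxiliary klt pair, in the spirit of Lemma \ref{lem:flopextension}), and by the uniqueness of a family flop given its restriction to one fibre (Lemma \ref{lem:KM92} and the discussion after Definition \ref{def:familyflop}) it restricts to $g_1$ on $X^k_b$. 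Iterating yields a sequence of family flops $G : (\mathcal X^k, \Delta^k) \dashrightarrow ((\mathcal X^k)^+, (\Delta^k)^+)$ over $B$ with $G|_{X^k_b} = g$.

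It remains to identify $((\mathcal X^k)^+, G \circ \phi^k)$ with some $(\mathcal X^j, \phi^j)$. The composition $G \circ \phi^k$ is a sequence of family flops $\mathcal X \dashrightarrow (\mathcal X^k)^+$, and its generic fibre realises $(\mathcal X^k)^+_\eta$ as a marked minimal model of $X_\eta$, which by construction is isomorphic to some $(X^j_\eta, \phi^j_\eta)$. The main obstacle is to propagate this identification from the generic fibre to the whole of $B$. I would argue this by an induction on the length of the flop sequences: at each step the uniqueness of a single family flop given its generic fibre (again Lemma \ref{lem:KM92}, combined with Proposition \ref{prop:raydeformation} with $U=B$) forces two family sequences of flops starting from $\mathcal X^k$ and having isomorphic generic fibres to have isomorphic target families, compatibly with the maps from $\mathcal X^k$. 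Applying this to $G \circ \phi^k$ and $\phi^j$, which have matching generic fibres as marked minimal models of $X_\eta$, produces an isomorphism of families $(\mathcal X^k)^+ \cong \mathcal X^j$ over $B$ intertwining $G \circ \phi^k$ and $\phi^j$. Because there are only $r^2$ pairs $(k,j)$ to consider, a single preliminary shrinking of $B$ suffices to make this identification valid over the entire base. Restricting to fibre $b$ yields $g = \phi^j \circ (\phi^k)^{-1}|_{X^k_b}$, which is exactly (2).
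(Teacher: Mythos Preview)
Your overall strategy matches the paper's, but there is a genuine gap in part (2), and it occurs in both halves of your argument.

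First, the iteration ``lift $g_1$, then lift $g_2$, \ldots'' does not go through as written. To lift $g_2$ you need Proposition~\ref{prop:raydeformation} to hold with $U=B$ for the target of the family flop $G_1:(\mathcal X^k,\Delta^k)\dashrightarrow (\mathcal X^k)'$. But you have only arranged this for the finitely many families $\mathcal X^1,\ldots,\mathcal X^r$; the intermediate family $(\mathcal X^k)'$ is not yet known to be one of them. So the iteration stalls after one step unless you already have the identification you are trying to prove.

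Second, and more seriously, your identification argument does not work. You want to conclude $G\circ\phi^k=\phi^j$ from the fact that they agree on the generic fibre, invoking ``uniqueness of a single family flop given its generic fibre''. But $\phi^j\circ(\phi^k)^{-1}$ was constructed as a \emph{sequence} of family flops, not a single one. Even if its restriction to the generic fibre happens to be a single flop, you have no reason to know that $\phi^j\circ(\phi^k)^{-1}$ itself is a single family flop over $B$, so your uniqueness principle does not apply. Your proposed induction on the length of the flop sequences cannot rescue this either: two sequences of flops with the same source and the same marked target need not have matching intermediate steps, so there is nothing to compare step by step.

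The paper closes both gaps with one extra shrinking of $B$ (done once, for all $r^2$ pairs): arrange that whenever $(\phi^j\circ(\phi^k)^{-1})_\eta$ is a \emph{single} flop of the generic fibre $\mathcal X^k_\eta$, then $\phi^j\circ(\phi^k)^{-1}$ is a single family flop over $B$. With this in hand, one reduces to the case where $g$ is a single flop (the general case follows by a straightforward induction on the length of $g$, landing at each stage in some $\mathcal X^{j_i}$ already on the list). In the single-flop case, one lifts $g$ to a family flop $G$ of $\mathcal X^k$ using only Proposition~\ref{prop:raydeformation} for $\mathcal X^k$, finds the $j$ with $(\phi^j)_\eta=G_\eta\circ(\phi^k)_\eta$, and then both $G$ and $\phi^j\circ(\phi^k)^{-1}$ are single family flops of $\mathcal X^k$ agreeing on the generic fibre; now the uniqueness you wanted (via the flat family of curves from Proposition~\ref{prop:raydeformation}) does apply and gives $G=\phi^j\circ(\phi^k)^{-1}$.
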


\begin{proof}
The generic fibre $(\mathcal X_\eta,\Delta_\eta)$ is a pair over the function field $\C(B)$ of $B$.
By \cite[Theorem E]{BCHM}, the base change of this pair to an algebraic closure $\overline{\C(B)}$ admits (up to isomorphism) only  
finitely many sequences of flops. 
This implies that also $(\mathcal X_\eta,\Delta_\eta)$ admits only finitely many sequences of flops. 
After shrinking $B$, we may thus by Lemma \ref{lem:flopextension} assume that there are finitely many sequences of flops
$$
\phi^k:(\mathcal X,\Delta)\dashrightarrow (\mathcal X^k,\Delta^k) 
$$
over $B$, $k=1,\dots ,r$, such that any sequence of flops of $(\mathcal X_\eta,\Delta_\eta)$ is given by restricting one of the $\phi^k$ to the generic fibre. 
Applying Proposition \ref{prop:raydeformation}, we may additionally assume that each $(\mathcal X^k,\Delta^k) $ satisfies the conclusion of Proposition \ref{prop:raydeformation} with $U=B$.
Shrinking further, we may also assume that whenever some composition $\phi^j\circ(\phi^k)^{-1}$ restricts to a flop of $(\mathcal X^k_\eta,\Delta^k_\eta)$, then $\phi^j\circ(\phi^k)^{-1}$ is a flop over $B$ (and not only a sequence of such).

In order to prove the proposition, it suffices to treat the case where $g$ is a single flop.
Using the existence of flops \cite[Corollary 1.4.1]{BCHM} and the fact that the conclusion of Proposition \ref{prop:raydeformation} holds for $U=B$, we see that $g$ extends to a flop
$$
G:(\mathcal X^k,\Delta^k)\dashrightarrow ((\mathcal X^k)^+,(\Delta^k)^+)
$$
over $B$.
Restricting to the generic fibre gives a flop $G_{\eta}$ and so we obtain a sequence of flops $G_\eta\circ \phi^k_\eta$ of  $(\mathcal X_\eta,\Delta_\eta)$.
By construction, there is some $j$, such that $\phi^j_\eta$ coincides with $G_\eta\circ \phi^k_\eta$.
Hence, $G$ and $\phi^j\circ(\phi^k)^{-1}$ coincide when restricted to the generic fibre.
In particular, $\phi^j\circ(\phi^k)^{-1}$ restricts to a flop on the generic fibre and so, by our assumptions, $\phi^j\circ(\phi^k)^{-1}$ is a flop over $B$.
Since $ (\mathcal X^k,\Delta^k)$ satisfies the conclusion of  Proposition \ref{prop:raydeformation} for $U=B$, any flop of that pair corresponds to a flat family of curves over $B$.
This implies $G=\phi^j\circ(\phi^k)^{-1}$, because both flops are determined by such a flat family of curves and we know that they coincide on the generic fibre. 
This concludes the proposition.
\end{proof}

\section{Terminalisations}

In the previous sections, we have only treated families of terminal pairs.
This is not very satisfactory, because the weak log canonical model of a terminal pair is in general not terminal but only klt.  
In this section we establish what is necessary to reduce the study of klt pairs to what we have proven for terminal pairs in Propositions \ref{prop:raydeformation} and \ref{prop:family} above. 
We start by recalling the following definition.

\begin{definition}\label{def:terminalisation}
Let $(X,\De)$ be a log pair.
A log pair $(X',\De')$ with a projective birational morphism $f \colon (X',\De') \longrightarrow (X,\De)$ is called a \emph{terminalisation} of $(X,\De)$ if $(X',\De')$ has terminal $\Q$-factorial singularities and $K_{X'}+\De'=f^*(K_X+\De)$. 
\end{definition}

Terminalisations of klt pairs always exist, see (the paragraph after) \cite[Corollary 1.4.3]{BCHM}, where they are called \emph{terminal models}. The following lemma is well-known.

\begin{lemma}\label{lem:discr}
Let $f \colon (X',\De') \longrightarrow (X,\De)$ be a terminalisation of a klt pair $(X,\De)$. 
Then the exceptional divisors of $f$ are exactly the exceptional divisors $E$ over $X$ such that $a(E,X,\De) \le 0$.
Moreover,
$$
\De'= \sum_{E \subset X' : a(E,X,\De) \le 0} -a(E,X,\De)E.
$$
\end{lemma}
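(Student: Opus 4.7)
The plan is to read off both statements directly from the defining equation $K_{X'}+\Delta'=f^{*}(K_X+\Delta)$ together with the invariance of discrepancies under crepant pull-backs.

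First I would establish the formula for $\Delta'$. Let $E\subset X'$ be an exceptional prime divisor of $f$. From the equation
$$\Delta' = f^{*}(K_X+\Delta) - K_{X'},$$
the coefficient of $E$ in $\Delta'$ is $-a(E,X,\Delta)$ by the very definition of discrepancy. Since $\Delta'$ is an effective $\Q$-divisor on $X'$, every exceptional divisor of $f$ must satisfy $a(E,X,\Delta)\le 0$, and the displayed formula for $\Delta'$ follows.

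Next I would show the converse: every exceptional divisor $E$ over $X$ with $a(E,X,\Delta)\le 0$ must appear as a divisor on $X'$. The key ingredient is the fact that $f$ is crepant, so for any divisor $E$ over $X'$ one has
$$a(E,X',\Delta')=a(E,X,\Delta);$$
this is verified by computing both discrepancies on a common log resolution $g\colon Y\to X'$ and using $(f\circ g)^{*}(K_X+\Delta)=g^{*}(K_{X'}+\Delta')$. Now suppose $E$ is exceptional over $X$ with $a(E,X,\Delta)\le 0$ but $E$ is \emph{not} a divisor on $X'$. Then $E$ is exceptional over $X'$, and the crepant invariance gives $a(E,X',\Delta')\le 0$, contradicting the terminality of $(X',\Delta')$ (which demands strictly positive discrepancy for every exceptional divisor over $X'$). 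Therefore every such $E$ must be a divisor on $X'$, and since $f$ is birational with $E$ exceptional over $X$, it is exceptional over $X$ i.e. an exceptional divisor of $f$.

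The two implications together identify the exceptional divisors of $f$ exactly as those with $a(E,X,\Delta)\le 0$, and the coefficient computation in the first step produces the explicit formula for $\Delta'$. There is no serious obstacle: the only point requiring care is verifying the crepant invariance of discrepancies, which is a routine pull-back computation on a common resolution.
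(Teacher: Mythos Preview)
Your argument is correct and is the standard proof of this well-known fact. The paper itself does not give a proof of this lemma; it simply introduces it with the phrase ``The following lemma is well-known'' and moves on, so there is no approach to compare against.

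One minor point of phrasing: in your first paragraph you restrict attention to exceptional prime divisors $E\subset X'$ when computing coefficients, but the displayed formula for $\Delta'$ ranges over \emph{all} prime divisors $E\subset X'$ with $a(E,X,\Delta)\le 0$, including the non-exceptional ones (the strict transforms of components of $\Delta$). Of course the same computation works verbatim for those: the coefficient of any prime divisor $E\subset X'$ in $\Delta'$ is $-a(E,X,\Delta)$, and for non-exceptional $E$ this is just $\operatorname{mult}_{f_*E}(\Delta)\ge 0$. So the formula follows in full, but it would be cleaner to state the coefficient identity for all prime divisors on $X'$ at once rather than only the exceptional ones.
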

%
 
\begin{lemma} \label{lem:flops+contr}
Let $(X,\Delta)$ be a klt pair with $K_X+\Delta$ big and nef, and consider its log canonical model $p:(X,\Delta)\longrightarrow (X^c,\Delta^c)$.
Let $q:(X^t,\Delta^t)\longrightarrow (X^c,\Delta^c)$ be a terminalisation.
Then there is a commutative diagram
$$
\xymatrix{
(X^t,\De^t)\ar@{-->}[r]^{\xi} \ar[d]^q & (X^+,\De^+) \ar[d]^\psi \\
 (X^c,\De^c)    &  (X,\De)  \ar[l]_p}
$$
where $\xi$ is a sequence of flops and $\psi$ is a contraction of a $(K_{X^+}+\Delta^+)$-trivial face. 
\end{lemma}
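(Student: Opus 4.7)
The plan is to define $(X^+, \Delta^+)$ as a terminalisation of $(X, \Delta)$, which exists by \cite{BCHM} since $(X, \Delta)$ is klt, and let $\psi \colon X^+ \to X$ be the terminalisation morphism. By definition $(X^+, \Delta^+)$ is $\Q$-factorial terminal and $K_{X^+} + \Delta^+ = \psi^*(K_X + \Delta)$. The latter identity forces every $\psi$-contracted curve to have zero intersection with $K_{X^+}+\Delta^+$, so $\psi$ is the contraction of the $(K_{X^+}+\Delta^+)$-trivial face of $\overline{NE}(X^+)$ spanned by these curves, which supplies the right-hand part of the diagram.

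Next I would observe that $\tilde\psi := p \circ \psi \colon X^+ \to X^c$ is itself a terminalisation of $(X^c, \Delta^c)$. Since $K_X + \Delta$ is nef and big and $(X, \Delta)$ is klt, the base point free theorem implies that $p$ is a morphism with $K_X + \Delta = p^*(K_{X^c} + \Delta^c)$, and therefore $K_{X^+} + \Delta^+ = \tilde\psi^*(K_{X^c} + \Delta^c)$; together with the terminality and $\Q$-factoriality of $(X^+, \Delta^+)$, this verifies all the conditions of a terminalisation. I then set $\xi \colon X^t \dashrightarrow X^+$ to be the induced birational map over $X^c$, so that the required diagram commutes by construction.

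To conclude, I would show that $\xi$ is a sequence of flops. By Lemma \ref{lem:discr}, the prime divisors appearing on any terminalisation of $(X^c, \Delta^c)$ are exactly the strict transforms of the prime divisors of $X^c$, together with the exceptional divisors $E$ over $X^c$ with $a(E, X^c, \Delta^c) \le 0$; this set depends only on $(X^c, \Delta^c)$. Hence $X^t$ and $X^+$ have naturally identified sets of prime divisors, so $\xi$ is small (it neither contracts nor extracts any divisor). The same lemma expresses $\Delta^t$ and $\Delta^+$ by identical formulas in terms of these discrepancies, so the coefficients match under the bijection and $\xi_* \Delta^t = \Delta^+$. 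Since both $(X^t, \Delta^t)$ and $(X^+, \Delta^+)$ are minimal models — their log canonical divisors are pullbacks of the ample divisor $K_{X^c} + \Delta^c$ — Kawamata's Theorem \ref{thm:minmod-connected} finally implies that $\xi$ decomposes as a sequence of flops, completing the proof. The main technical point is the identification of prime divisors showing that $\xi$ is small together with the equality $\xi_* \Delta^t = \Delta^+$, but both follow cleanly from Lemma \ref{lem:discr}.
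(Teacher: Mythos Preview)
Your proof is correct and follows essentially the same route as the paper: take $(X^+,\Delta^+)$ to be a terminalisation of $(X,\Delta)$, observe that $p\circ\psi$ is then a terminalisation of $(X^c,\Delta^c)$, use Lemma~\ref{lem:discr} to get $\xi_\ast\Delta^t=\Delta^+$, and conclude via Kawamata's theorem. Your argument is slightly more detailed (you justify that $p$ is a morphism via the base point free theorem and explicitly note that $\xi$ is small), but the strategy is identical.
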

\begin{proof}
Let $\psi \colon (X^+,\Delta^+)\longrightarrow (X,\Delta)$ be a terminalization.
Since $K_{X^+}+\Delta^+=\psi^\ast(K_X+\Delta)$,  $\psi$ is the contraction of a $(K_{X^+}+\Delta^+)$-trivial face.

Note that $p\circ \psi\colon (X^+,\Delta^+)\longrightarrow (X^c,\Delta^c)$ is a terminalization of $(X^c,\Delta^c)$. 
It thus follows from Lemma \ref{lem:discr} that the natural birational map $\xi:X^t\dashrightarrow X^+$ satisfies $\xi_*\De^t=\De^+$.
Since $(X^t,\De^t)$ and $(X^+,\De^+)$ are terminal $\Q$-factorial and $K_{X^t}+\De^t$ and $K_{X^+}+\De^+$ are both nef, $\xi$ is a sequence of flops by  \cite[Theorem 1]{kawamata-flops}. 
This proves the lemma. 
\end{proof}

\begin{lemma}\label{lem:wlcm}
Let $(X,\De)$ be a klt pair of general type and $\phi \colon (X,\De) \dashrightarrow (X^c,\De^c)$ its log canonical model. 
A birational map $g \colon (X,\De) \dashrightarrow (Y,\Gamma)$ to a klt pair $(Y,\Gamma)$  is a weak log canonical model if and only if there is a terminalisation $q \colon (X^t,\De^t) \longrightarrow (X^c,\De^c)$ such that the induced map $\psi=g \circ \phi^{-1} \circ q \colon(X^t,\De^t) \longrightarrow (Y,\Gamma)$ is a birational morphism with the following properties:
\begin{itemize}
\item $\psi$ is the contraction of a $(K_{X^t}+\De^t)$-trivial extremal face of $\overline{NE}(X^t)$;
\item if $E$ is a divisor on $X^t$ which is contracted by $\phi^{-1}\circ q\colon X^t \dashrightarrow X$ or for which $a(E,X,\De) \ne a(E,X^c,\De^c)$, then $E$ is contracted by $\psi$.
\end{itemize} 
\end{lemma}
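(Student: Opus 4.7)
The plan is to prove both implications by leveraging the fact that any weak log canonical model of $(X,\De)$ has $(X^c,\De^c)$ as its log canonical model, combined with the interplay of terminalisations with log crepant contractions. Throughout I would use the identity $\phi=\rho\circ g$, where $\rho\colon Y\to X^c$ denotes the log canonical morphism of $(Y,\Gamma)$.

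For the forward direction, suppose $g\colon(X,\De)\dashrightarrow(Y,\Gamma)$ is a weak log canonical model. Since $g$ is a $(K_X+\De)$-non-positive birational contraction, the section rings of $K_X+\De$ and $K_Y+\Gamma$ agree; as $K_Y+\Gamma$ is big (because $(X,\De)$ is of general type) and nef, the base point free theorem produces a birational morphism $\rho\colon Y\to X^c$ with $K_Y+\Gamma=\rho^{*}(K_{X^c}+\De^c)$. I would then choose a terminalisation $\psi\colon(X^t,\De^t)\to(Y,\Gamma)$ via BCHM and set $q:=\rho\circ\psi$. The chain $K_{X^t}+\De^t=\psi^{*}(K_Y+\Gamma)=q^{*}(K_{X^c}+\De^c)$ shows that $q$ is a terminalisation of $(X^c,\De^c)$ and that $\psi$ contracts a $(K_{X^t}+\De^t)$-trivial extremal face. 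For the divisorial conditions, the identity $\phi^{-1}\circ q=g^{-1}\circ\psi$ together with the fact that $g^{-1}$ has no exceptional divisors shows that a divisor $E\subset X^t$ is contracted by $\phi^{-1}\circ q$ iff it is $\psi$-contracted; and if $E$ is not $\psi$-contracted, then using $\Gamma=g_{*}\De$ together with the crepancy of $\psi$ and $q$ we get $a(E,X,\De)=a(E,Y,\Gamma)=a(E,X^c,\De^c)$, so the case-(b) condition forces $E$ to be $\psi$-contracted.

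For the reverse direction, I would set $g:=\psi\circ q^{-1}\circ\phi$ and check that $g$ is a weak log canonical model of $(X,\De)$ with target $(Y,\Gamma)$. Nefness of $K_Y+\Gamma$ is immediate from $K_Y+\Gamma=\psi_{*}(K_{X^t}+\De^t)$ combined with $K_{X^t}+\De^t=q^{*}(K_{X^c}+\De^c)$. To see that $g^{-1}$ has no exceptional divisors, take any divisor $E_Y\subset Y$, let $E\subset X^t$ be its strict transform under $\psi$; the contrapositive of condition (a) forbids $E$ from being contracted by $\phi^{-1}\circ q$, so $g^{-1}(E_Y)=(\phi^{-1}\circ q)(E)$ is a divisor on $X$. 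The $(K_X+\De)$-non-positivity follows from $a(E,X,\De)\le a(E,X^c,\De^c)=a(E,Y,\Gamma)$, where the inequality is a property of the log canonical model $\phi$. Finally, $\Gamma=g_{*}\De$ is extracted by combining condition (b) with the crepancy identities: for every non-$\psi$-contracted divisor $E\subset X^t$, condition (b) gives $a(E,X,\De)=a(E,X^c,\De^c)=a(E,Y,\Gamma)$, matching the coefficients of $E_X=g^{-1}(E_Y)$ in $\De$ and of $E_Y$ in $\Gamma$.

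The main obstacle lies in the careful bookkeeping of discrepancies across the three pairs $(X,\De)$, $(X^c,\De^c)$ and $(Y,\Gamma)$, especially for divisors on $X^t$ that are $q$-exceptional, whose coefficients in $\De^t$ are dictated by Lemma \ref{lem:discr}. Condition (b) is needed precisely to guarantee $\Gamma=g_{*}\De$ in the reverse direction: without it the boundary on $Y$ could a priori differ from the pushforward of $\De$ even though $g^{-1}$ has no exceptional divisors; and symmetrically, deriving case (b) in the forward direction hinges on the identity $\Gamma=g_{*}\De$ furnished by the weak log canonical model hypothesis.
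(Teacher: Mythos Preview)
Your proposal is correct and follows essentially the same approach as the paper: in both directions you (like the paper) take a terminalisation $\psi$ of $(Y,\Gamma)$, set $q=\rho\circ\psi$, and then verify the required properties by comparing discrepancies across $(X,\De)$, $(Y,\Gamma)$, and $(X^c,\De^c)$ via the crepancy identities $K_{X^t}+\De^t=\psi^*(K_Y+\Gamma)=q^*(K_{X^c}+\De^c)$. The only cosmetic difference is that you phrase the divisorial bookkeeping through the identity $\phi^{-1}\circ q=g^{-1}\circ\psi$, whereas the paper uses $\psi=g\circ\phi^{-1}\circ q$ directly; one minor wording issue is that nefness of $K_Y+\Gamma$ does not follow from the pushforward identity alone but rather from $K_Y+\Gamma=\rho^*(K_{X^c}+\De^c)$, which is implicit in your crepancy chain.
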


\begin{proof} 
We have the following diagram:
$$
\xymatrix{
(X,\De) \ar@{-->}[d]^\phi  & (X^t,\De^t) \ar[dl]_q \ar[d]^\psi \\
 (X^c,\De^c)    &  (Y,\Gamma).  \ar[l]_p}
$$

Assume first that $g \colon (X,\De) \dashrightarrow (Y,\Gamma)$ is a weak log canonical model.
By the uniqueness of log canonical models, we know that $p:=\phi\circ g^{-1} \colon (Y,\Gamma) \longrightarrow (X^c,\De^c)$ is the log canonical model of $(Y,\Gamma)$ and a terminalisation $\psi \colon (X^t,\De^t) \longrightarrow (Y,\Gamma)$ induces a terminalisation $q:=p \circ \psi \colon (X^t,\De^t) \longrightarrow (X^c,\De^c)$ of $(X^c,\De^c)$.
The morphism $\psi$ is thus the contraction of a $(K_{X^t}+\De^t)$-trivial extremal face of $\overline{NE}(X^t)$ and if a divisor $E$ on $X^t$ is contracted by $\phi^{-1} \circ q$, then it is also contracted by $\psi=g \circ \phi^{-1} \circ q$, because $g$ is a contraction.
Moreover, if $E$ is a divisor on $X^t$ which is not contracted by $\psi$, then its pushforward on $X$ is not contracted by $g$ and so $a(E,X,\De) = a(E,Y,\Gamma) = a(E,X^c,\De^c)$.


Viceversa, assume  that we have a birational map $g \colon (X,\De) \dashrightarrow (Y,\Gamma)$ to a klt pair $(Y,\Gamma)$ and a terminalisation $q \colon (X^t,\De^t) \longrightarrow (X^c,\De^c)$ as in the statement of the lemma. 
Since $\psi$ is the contraction of a $(K_{X^t}+\De^t)$-trivial extremal face, we have that $K_Y+\Gamma$ is big and nef and $p:=\phi\circ g^{-1} \colon (Y,\Gamma) \longrightarrow (X^c,\De^c)$ is the log canonical model of $(Y,\Gamma)$.  We need to prove that $g$ is a $(K_X+\De)$-non-positive contraction such that $g_*\De=\Gamma$.

The fact that $g$ is a contraction follows from the fact that any divisor $E$ on $X^t$ which is contracted by $\phi^{-1}\circ q\colon X^t \dashrightarrow X$ is contracted also by $\psi$.  
Let $E$ be a prime divisor on $Y$. Then $a(E,X^c,\De^c)=a(E,Y,\Gamma)=a(E,X,\De)$ and so $\textrm{mult}_\Gamma(E)=\textrm{mult}_\Delta(E')$ where $E'$ is the strict transform of $E$ on $X$ via $g$ and hence $g_*\De=\Gamma$. 
Finally, the contraction $g$ is $(K_X+\De)$-non-positive because any divisor $E$ contracted by $g$ is also contracted by $\phi$ and so $a(E,X,\De) \le a(E,X^c,\De^c) = a(E,Y,\Gamma)$.
\end{proof}
 
\begin{lemma}\label{lem:X=klt}  
Let $B$ be a quasi-projective variety, and let $\pi \colon (\mathcal X,\Delta)\longrightarrow B$ be a family of klt (log canonical) pairs. 
Then, for any sufficiently small dense open subset $U\subset B$, the base change $\mathcal X_U:=\mathcal X\times_BU$ with boundary $\Delta_U:=\Delta\times_X U$ form a klt (resp.\ log canonical) pair $(\mathcal X_U,\Delta_U)$.  
\end{lemma}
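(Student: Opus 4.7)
The plan is to mimic the proof of Lemma \ref{lem:X=terminal} and reduce to the case where $\mathcal X$ is normal, after which the fibrewise klt (resp.\ log canonical) condition propagates to the total space on a dense open part of the base.

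First I would shrink $B$ so that $\mathcal X$ is itself normal. This is legitimate because every fibre $X_b$ is normal (klt and lc pairs live on normal varieties by definition), so the normalisation morphism $\widetilde{\mathcal X}\longrightarrow \mathcal X$ is an isomorphism at the generic point of each fibre and hence over a Zariski dense open subset of $B$. Replacing $B$ by this open subset allows us to assume that $\mathcal X$ is normal, in which case $(\mathcal X,\Delta)$ is itself a log pair (the $\Q$-Cartier condition for $K_{\mathcal X}+\Delta$ may be arranged after shrinking $B$ further, using that relative dualising works well for flat families with Cohen--Macaulay fibres).

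Second, I would further shrink $B$ so that $B$ is smooth. Then the fibres of $\pi$ are cut out locally on $B$ by a regular system of parameters, and one can apply inversion of adjunction in the form of \cite[Proposition 3.5]{defernex-hacon} (the very same reference used in Lemma \ref{lem:X=terminal}, whose statement covers the klt and lc case, not only the terminal one): if $(X_b,\Delta_b)$ is klt (resp.\ lc) for one, hence for a general, $b\in B$, then $(\mathcal X,\Delta)$ is klt (resp.\ lc) in a neighbourhood of that fibre. Shrinking $B$ to such a neighbourhood gives the desired open subset $U\subset B$.

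The main technical point is really the reduction to a setting where inversion of adjunction applies cleanly, i.e.\ ensuring that $\mathcal X$ is normal and that $K_{\mathcal X}+\Delta$ is $\Q$-Cartier in a neighbourhood of a general fibre; both are generic statements and cause no difficulty once the base is shrunk appropriately. Once these hypotheses are met, the conclusion follows directly from the cited adjunction result, exactly as in the terminal case.
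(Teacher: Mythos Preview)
Your outline matches the paper's strategy (normalise, make $K_{\mathcal X}+\Delta$ $\Q$-Cartier, then apply inversion of adjunction), but you underestimate the second step. Saying that the $\Q$-Cartier condition ``may be arranged after shrinking $B$ further, using that relative dualising works well for flat families with Cohen--Macaulay fibres'' is not an argument: the existence of a relative dualising sheaf only gives you $K_{\mathcal X/B}$ as a Weil divisor class, not that any multiple of $K_{\mathcal X}+\Delta$ is Cartier. Knowing that $K_{X_b}+\Delta_b$ is $\Q$-Cartier on every closed fibre does not by itself produce a uniform index, nor does it immediately globalise to the total space. The paper treats this honestly: it observes that the geometric generic fibre $(X_{\overline\eta},\Delta_{\overline\eta})$ is again a klt (resp.\ lc) pair, so some $a(K_{X_{\overline\eta}}+\Delta_{\overline\eta})$ is Cartier; this Cartier divisor is defined over a finite extension of $\mathbf k(B)$, hence after a finite \'etale base change $B'\to B$ and shrinking one has $a(K_{\mathcal X'}+\Delta')$ Cartier on the base-changed family, and then one pushes forward along the finite map $\mathcal X'\to\mathcal X$ to conclude that $K_{\mathcal X}+\Delta$ is $\Q$-Cartier (cf.\ the proof of \cite[Lemma 5.16]{kollar-mori}). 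This is precisely the step your proposal labels as causing ``no difficulty'', and it is where the real content of the lemma sits.

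For the final step the paper invokes \cite[Theorem 7.5]{kollar-pairs} rather than \cite[Proposition 3.5]{defernex-hacon}. The latter is what is cited in Lemma~\ref{lem:X=terminal} for the terminal case; if you want to use it here you should verify that its statement actually covers the klt and, especially, the log canonical case. Koll\'ar's theorem does so explicitly and is the safer reference.
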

\begin{proof}
Since the fibres of $\pi$ are normal, the normalization of $\mathcal X$ is an isomorphism over a general point of $B$. 
After shrinking $B$, we may thus assume that $\mathcal X$ is normal.

We now show that, up to shrinking $B$, $K_{\mathcal X}+ \De$ is a $\Q$-Cartier divisor.
The geometric generic fibre $(X_{\overline \eta},\De_{\overline \eta})$ is a klt (resp. lc) pair, so there exists a positive integer $a$ such that $a(K_{X_{\overline \eta}}+\De_{\overline \eta})$ is Cartier.
This Cartier divisor is defined over some finite extension of the function field $\textbf{k}(B)$.
Therefore, there is a dominant morphism $p : B'\longrightarrow B$, finite onto its image, such that the base change $\pi' : (\mathcal X',\De')\longrightarrow B'$ carries a Cartier divisor $L$ which restricts to $a(K_{X'_{\overline \eta}}+\De'_{\overline \eta})$ on the geometric generic fibre. 
After shrinking $B$ and $B'$, we may further assume that $B$ and $B'$ are smooth, $p : B'\longrightarrow B$ is proper and \'etale, and $L=a(K_{\mathcal X'}+\De')$, i.e.\ $K_{\mathcal X'}+\De'$ is $\Q$-Cartier.
Considering the pushforward of  $K_{\mathcal X'}+\De'$ to $\mathcal X$ shows that $K_{\mathcal X}+\De$ is $\Q$-Cartier, see for instance the proof of \cite[Lemma 5.16]{kollar-mori}.

The lemma follows now from \cite[Theorem 7.5]{kollar-pairs}.
\end{proof}	

Terminalisations exist in families in the following sense, see also \cite[Proposition 2.5]{HX14}.

\begin{lemma}\label{lem:terminfamilies} 
Let $p \colon (\mathcal Y, \Gamma) \longrightarrow S$ be a projective family of klt pairs.
Then there is a surjection $q \colon B\twoheadrightarrow S$, which on each component of $B$ is finite onto its image, and a projective family of pairs $\pi \colon (\mathcal X,\De) \longrightarrow B$, such that for all $b\in B$, $(X_b,\De_b) \longrightarrow (Y_{q(b)},\Gamma_{q(b)})$ is a terminalisation of $(Y_{q(b)},\Gamma_{q(b)})$. 
\end{lemma}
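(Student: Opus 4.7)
The plan is to prove the lemma by Noetherian induction on $\dim S$, establishing the conclusion over a dense open subset at each stage and applying the induction hypothesis to the proper closed complement. Since $S$ is reduced, its irreducible components can be treated independently, so we may assume $S$ is irreducible; the disjoint union of the resulting $B$'s respects the condition that $q$ be finite onto its image on each component.

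For the inductive step, I first apply Lemma~\ref{lem:X=klt} to shrink $S$ so that the total pair $(\mathcal Y, \Gamma)$ is klt. By \cite[Corollary~1.4.3]{BCHM}, there is then a terminalisation $f \colon (\mathcal X, \Delta) \to (\mathcal Y, \Gamma)$, so that $(\mathcal X, \Delta)$ is terminal $\mathbb Q$-factorial and $K_{\mathcal X} + \Delta = f^{\ast}(K_{\mathcal Y} + \Gamma)$. The main task is to show that, after further shrinking $S$ (and possibly passing to a finite cover, cf.\ \cite[Proposition~2.5]{HX14}), the morphism $f$ restricts fibrewise to a terminalisation $f_b \colon (X_b, \Delta_b) \to (Y_b, \Gamma_b)$.

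This amounts to verifying four properties on a dense open $U \subseteq S$: (a) $(X_b, \Delta_b)$ is terminal; (b) $X_b$ is $\mathbb Q$-factorial; (c) $K_{X_b} + \Delta_b = f_b^{\ast}(K_{Y_b} + \Gamma_b)$; and, by Lemma~\ref{lem:discr}, (d) the exceptional divisors of $f_b$ are exactly the divisors $E$ over $Y_b$ with $a(E, Y_b, \Gamma_b) \le 0$. Property (a) follows from a standard adjunction argument applied to a relative log resolution of $(\mathcal X, \Delta)$, combined with generic flatness, cf.\ \cite[Proposition~3.5]{defernex-hacon}. Property (b) is Koll\'ar--Mori's openness of $\mathbb Q$-factoriality for families of terminal varieties \cite{KM92}, applicable precisely because we arranged the total space to be terminal in the first place. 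Property (c) is immediate from restricting the $\mathbb Q$-Cartier equality $K_{\mathcal X} + \Delta = f^{\ast}(K_{\mathcal Y} + \Gamma)$ to a fibre over a point where $\pi := p \circ f$ is flat. Property (d) is the most subtle: every divisor over $Y_b$ with non-positive discrepancy must come from restricting a divisor over $\mathcal Y$ that is extracted by $f$, and ensuring this generally requires passing to a finite cover of $S$ to bring all such valuations of the generic fibre into the picture.

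Once the claim is verified over a dense open $U \subseteq S$, we obtain $(\mathcal X_U, \Delta_U) \to U$ as the first piece of our family, and we apply the induction hypothesis to $p$ restricted to the proper closed subset $S \setminus U$ to produce the remaining pieces; the disjoint union yields the desired $B$ together with the surjection $q$. I expect property (d) to be the main obstacle, as the terminalisation of the geometric generic fibre of $p$ exists only after a finite extension of the function field of $S$. This is precisely the reason the lemma allows $q$ to be a surjection that is only finite onto its image on each component of $B$, rather than requiring one to simply shrink $S$.
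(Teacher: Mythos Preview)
Your overall architecture (Noetherian induction on $\dim S$, shrink to make the total pair klt, construct a terminalisation of the total space via \cite{BCHM}, then verify the fibrewise properties) matches the paper's. However, there is a genuine gap in your treatment of property (b), and a related misidentification of where the finite cover is actually needed.

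The openness result \cite[Theorem~12.1.10]{KM92} applies to families whose \emph{fibres} are terminal and says that if \emph{one} fibre is $\Q$-factorial then so are all nearby fibres. Knowing that the total space $\mathcal X$ is $\Q$-factorial does not supply such a seed fibre: $\Q$-factoriality of a total space is not inherited by fibres in general (cf.\ \cite[Remark~12.4.3]{KM92}). Your argument for (b) therefore does not go through as written. Conversely, once (a), (b), (c) hold and $\Delta_b\ge 0$, the map $f_b$ is a terminalisation by definition, so your property (d) is a consequence, not an additional condition; the finite cover is not needed for (d).

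The paper resolves (b) by reversing the order of operations. It first takes a log resolution $(\mathcal Y',\Gamma')\to(\mathcal Y,\Gamma)$ and shrinks so that $\mathcal Y'\to S$ is smooth; now the fibres are smooth, hence terminal and $\Q$-factorial, so Lemma~\ref{lem:monodromy} applies and, after a finite base change $B\to S$, one may assume $N^1(\mathcal W')\to N^1(W'_t)$ is onto for very general $t$. Only \emph{then} does one run the relative MMP over $\mathcal W$ to produce the terminalisation $(\mathcal X,\Delta)$. The point of the monodromy trivialisation is that under this surjectivity the relative MMP restricts, step by step, to an honest log MMP on the very general fibre $W'_t$; hence $X_t$ is $\Q$-factorial, and openness then propagates this to all fibres over a dense open. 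In your approach you terminalise before arranging any monodromy condition, so you have no mechanism to produce a single $\Q$-factorial fibre. The fix is exactly the paper's: pass to a log resolution first, base change to trivialise the monodromy on $N_1$, and only then run the MMP.
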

\begin{proof}
It suffices to treat the special case where $S$ is irreducible and we prove the lemma by induction on $N:=\dim(S)$.
If $N=0$, the statement follows from the existence of a terminalisation for a klt pair, see \cite[Corollary 1.4.3]{BCHM}.

Let now $N>0$. 
By induction, we may without loss of generality replace $S$ by any Zariski open and dense subset; the complement has lower dimension and so we know the result there by induction.

By Lemma \ref{lem:X=klt} we can assume that $(\mathcal Y, \Gamma)$ is a klt pair. 
Let $f:(\mathcal Y',\Gamma') \longrightarrow (\mathcal Y, \Gamma)$ be a log resolution, where we wrote
$$
K_{\mathcal Y'} + \Gamma' = f^*(K_{\mathcal Y} + \Gamma) + E
$$
such that $\Gamma'$ and $E$ are effective divisors with no common component. We have that  $(\mathcal Y' ,\Gamma')$ is klt.

Up to shrinking $S$, we may assume that $\mathcal Y'$ and $S$ are smooth and quasi-projective and the morphism $p \circ f$ is smooth. We can also assume that $f$ is fibrewise a log resolution such that
$$
K_{Y'_s} + \Gamma'_s = f^*(K_{ Y_s} + \Gamma_s) + E_s
$$
for any $s \in S$, where $\Gamma'_s$ and $E_s$ are effective with no common component.

After shrinking $S$, there is by Lemma \ref{lem:monodromy} a variety $B$ and a finite morphism $B\longrightarrow S$, such that the base change $ \mathcal W':=\mathcal Y'\times_SB\longrightarrow B$ has the property that for a very general $t\in B$, the restriction map
$
N^1(\mathcal W')\longrightarrow N^1(W'_t)
$
is onto.
By Lemma \ref{lem:X=klt} we may assume that $(\mathcal W', \Sigma')$ is klt, where $\Sigma'=\Gamma' \times_S B$.
The same is true for $(\mathcal W, \Sigma)$ where  $\mathcal W := \mathcal Y\times_SB\longrightarrow B$ and $\Sigma:= \Gamma \times_S B$.

Shrinking further, we may additionally assume that $\mathcal W'$ and $B$ are smooth and quasi-projective, that $(\mathcal W', \Sigma')$ and $(\mathcal W, \Sigma)$ are projective families of klt pairs over $B$ and that $(\mathcal W', \Sigma')$ is fibrewise a log-resolution of $(\mathcal W, \Sigma)$.
We have the following diagram, where the horizontal rows are finite morphisms and the vertical maps have connected fibres.
$$
\xymatrix{
(\mathcal W', \Sigma') \ar[r] \ar[d] & (\mathcal Y',\Gamma') \ar[d]^f \\
 (\mathcal W,\Sigma)  \ar[r] \ar[d] & (\mathcal Y,\Gamma)   \ar[d]^p      \\
 B \ar[r] &  S
}
$$

By \cite{BCHM}, we can run a relative log MMP of $(\mathcal W', \Sigma')$ over $\mathcal W$, and arrive at a relatively log terminal model 
$$
G \colon (\mathcal W', \Sigma') \dashrightarrow \mathcal (X,\De)
$$ 
over $\mathcal W$, $g\colon (\mathcal X, \De)\longrightarrow \mathcal W$.

The pair $(\mathcal X,\De)$ is a terminalisation of $(\mathcal W, \Sigma)$ (see the proof of \cite[Corollary 1.4.3]{BCHM}). 
In particular, it is terminal and $\Q$-factorial and so applying Bertini's theorem after shrinking $B$ we may assume that all fibres of $\pi \colon (\mathcal X,\De)\longrightarrow B$ are terminal and that
$$
K_{X_b}+\De_b=g^*(K_{W_b}+\Sigma_b)
$$
for any $b \in B$.

In order to conclude the proof of the lemma, we just need to show that over an open and dense Zariski subset of $B$, the fibres of $\pi$ are $\Q$-factorial. 
By the openess of $\Q$-factoriality in families of terminal varieties (see \cite[Theorem 12.1.10]{KM92}), we are done if we can find one fibre of $\pi$ that is $\Q$-factorial.
For this it suffices to show that the restriction of $G$ to a very general fibre $(W'_t,\Sigma'_t)$ is given by (some steps of) a log MMP for that fibre.
The latter is an easy consequence of the fact that $N^1(\mathcal W')\longrightarrow N^1(W'_t)$ is onto and $t\in B$ is general, see for example the proof of \cite[Theorem 4.1]{defernex-hacon}.
 \end{proof}

\begin{remark} 
In order to ensure in Lemma \ref{lem:terminfamilies} that the fibres of a smooth family $\mathcal X\longrightarrow B$ stay $\Q$-factorial after running a relative MMP,  
it is necessary to perform a base change which trivializes the monodromy on $N_1(X_t)$ for very general $t\in B$ (or equivalently, such that $N^1(\mathcal X)\longrightarrow N^1(X_t)$ is onto, see Section \ref{subsubsec:monodromy}), cf.\ \cite[Remark 12.4.3]{KM92}. 
\end{remark}

\section{Proof of the main results} 

\begin{proof}[Proof of Theorem \ref{thm:stratification}]
Let $\pi \colon \mathcal X\longrightarrow B $ be a family of complex projective varieties such that the resolution of each fibre is of general type. 
By induction on the dimension of $B$, it suffices to prove that the number of marked minimal models of the fibres of $\pi$ are locally constant on some Zariski open and dense subset.
Moreover, it suffices to treat the case where $B$ is irreducible and we may replace $B$ by any dominant base change $B'\twoheadrightarrow B$. 
Replacing $\mathcal X$ by the relative canoncial model of (some resolution of) $\mathcal X$ over $B$ and shrinking $B$ if necessary, we may assume that $\pi$ is a family of canonical models.
The terminalisation of a canonical model is a minimal model.
Applying Lemma \ref{lem:terminfamilies}, we may therefore assume that $\pi$ is a family of minimal models of general type. 
By Proposition \ref{prop:family}, we may also assume that there are finitely many projective families of minimal models $\pi^{k} \colon \mathcal X^k\longrightarrow B$, with $\mathcal X^1=\mathcal X$, and such that there are sequences of flops of families
$\phi^k \colon \mathcal X\dashrightarrow \mathcal X^k$ such that item (\ref{item:prop:2}) of Proposition \ref{prop:family} holds.
That is, for any sequence of flops $g \colon X\dashrightarrow X^+$ of any fibre of $\pi^k$, there is some $j$ such that the restriction of the composition $\phi^j\circ (\phi^k)^{-1}$ to $X\subseteq \mathcal X^k$ coincides with $g$.
Since marked minimal models are connected by sequences of flops (see Theorem \ref{thm:minmod-connected}), Theorem \ref{thm:stratification} follows therefore from the easy fact that the locus of points $b\in B$, such that the restriction of $\phi^j\circ (\phi^k)^{-1}$ to the fibre $X^k_b$ is an isomorphism, is constructible in the Zariski topology of $B$.
\end{proof}

\begin{proof}[Proof of Corollary \ref{cor:nrofmodels}]
By \cite{HM06,takayama,tsuji}, smooth complex projective varieties of general type, of given dimension and bounded volume form a birationally bounded family.   
Corollary \ref{cor:nrofmodels} follows then by Theorem \ref{thm:stratification} and the fact that any constructible function on a scheme of finite type is bounded. 
\end{proof}

\begin{proof}[Proof of Theorem \ref{thm:stratification:pairs}] 
By induction on the dimension of $B$, it suffices to prove that the number of weak log canonical models of the fibres of $\pi$ is locally constant on some Zariski open and dense subset of $B$.
To prove this, we may assume that $B$ is irreducible and we are allowed to perform arbitrary dominant base changes $B'\longrightarrow B$.

By Lemma \ref{lem:X=klt} we may assume that $(\mathcal X,\De)$ is a klt pair.
Let $\phi \colon (\mathcal X, \De) \dashrightarrow (\mathcal X^c, \De^c)$  be its relative log canonical over $B$ and denote with $\pi^c \colon (\mathcal X^c, \De^c) \longrightarrow B$ the morphism to $B$.
Up to shrinking $B$ we may assume that $(X_b,\De_b) \dashrightarrow (X_b^c,\De_b^c)$ is a log canonical model for any $b \in B$. 

Let $\pi^t \colon  (\mathcal X^t, \De^t) \longrightarrow B$ be a family obtained applying Lemma \ref{lem:terminfamilies} to $\pi^c$. 
We apply Proposition \ref{prop:family} to the family $\pi^t \colon  (\mathcal X^t, \De^t) \longrightarrow B$ to get projective families $\pi^k \colon (\mathcal X^k,\Delta^k)\longrightarrow B$ of terminal and $\Q$-factorial pairs $(X^k_b,\Delta^k_b)$ with $K_{X^k_b}+\Delta^k_b$ big and nef, where $k=1,\dots ,r$, such that item (\ref{item:prop:1}) and (\ref{item:prop:2}) of Proposition \ref{prop:family}, as well as Proposition \ref{prop:raydeformation} with $U=B$ hold.
Let 
$$
\psi^{k,j}\colon (\mathcal X^{k},\Delta^{k})\longrightarrow (\mathcal X^{k,j},\Delta^{k,j})
$$
denote the contractions of the finitely many $(K_{\mathcal X^k}+\Delta^k)$-trivial extremal faces of $\overline{NE}(\mathcal X^k/B)$, where $j=1,\dots ,s(k)$; for simplicity, we include the trivial face in this list and so we may assume  $(\mathcal X^{k,1},\Delta^{k,1})= (\mathcal X^{k},\Delta^{k})$ and $\psi^{k,1}=\id$. Denote by $\pi^{k,j}\colon (\mathcal X^{k,j},\Delta^{k,j})\longrightarrow B$ the natural morphisms to $B$.

Up to shrinking $B$, we may assume that any component of the loci contracted by $\phi$ and $\psi^{k,j}$ is dominant and flat over $B$.
In particular, we can assume that for any point $b \in B$, if $E_b$ is a prime divisor contracted by the induced map $(X^{k})_b \dashrightarrow X_b$, then there is a prime divisor $E \subset \mathcal X^k$ which is contracted by $\mathcal X^{k} \dashrightarrow \mathcal X$ and restricts to $E_b$ on $X_b$.
Finally, we can assume that for any  exceptional divisor $E$ of $\psi^{k,j}$, we have $a(E,\mathcal X,\De)=a(E_b,X_b,\De_b)$ and $a(E,\mathcal X^c,\De^c)=a(E_b,X^c_b,\De^c_b)$ for any $b\in B$.

Let $(X_b,\De_b)$ be a fibre of $\pi$ over a point $b \in B$. By Lemma \ref{lem:flops+contr} and Corollary \ref{cor:prop:raydef:F}, any weak log canonical model of $(X_b,\De_b)$ is a fibre over $b$ of some of the families $\pi^{k,j}$. 
More precisely, by our assumptions and by Lemma \ref{lem:wlcm}, a fibre over $b$ of a family $\pi^{k,j}$ is a weak log canonical model of $(X_b,\De_b)$ if and only if $\psi^{k,j}$ contracts any divisor $E$ on $\mathcal X^{k}$ such that $E$ is contracted by the induced map $\mathcal X^{k} \dashrightarrow \mathcal X$ or such that $a(E,\mathcal X, \De) \ne a(E,\mathcal X^c, \De^c)$.
Let us consider only the families that satisfy these conditions and denote by $g^{k,j} \colon (\mathcal X,\De) \dashrightarrow (\mathcal X^{k,j}, \De^{k,j})$ the induced maps, which are exactly the weak log canonical models of the family $(\mathcal X,\De)$ over $B$. 
Theorem \ref{thm:stratification:pairs} follows now from the fact that the locus of points $b\in B$, such that the restriction of $g^{k,j}\circ (g^{h,l})^{-1}$ to the fibre $X^{h,l}_b$ is an isomorphism, is constructible in the Zariski topology of $B$. 
\end{proof}

\begin{remark} \label{rem:stratification:Q-factorial}
Consider $\pi^{k,j}\colon \mathcal X^{k,j} \longrightarrow B$ from the proof of Theorem \ref{thm:stratification:pairs}.
Up to possibly shrinking $B$, Corollary \ref{cor:prop:raydef:F} implies that one fibre of $\pi^{k,j}$ is $\Q$-factorial if and only if all fibres have that property.
This shows that Theorem \ref{thm:stratification:pairs} remains true if we count only $\Q$-factorial weak log canoncial models.
\end{remark}

\begin{proof}[Proof of Theorem \ref{thm:boundednesspairs}] 
By Theorem \ref{thm:boundeed:HMX} the set $\mathfrak{F}_{klt}$ of all $(Y,D)$ such that $Y$ is a  projective variety of dimension $n$, $(Y,D)$ is a klt pair, the coefficients of $D$ belong to $I$, $K_Y+ D$ is ample and $(K_Y+D)^n=c$, is bounded.
Let $p \colon (\mathcal Y, \Gamma) \longrightarrow S$ be a projective family of klt pairs (over a scheme $S$ of finite type over $\C$) such that any element of $\mathfrak F_{klt}$ is isomorphic to a fibre of $p$.

By Noetherian induction and Lemma \ref{lem:terminfamilies}, we get a projective family of terminal pairs $\pi\colon  (\mathcal X, \De) \longrightarrow B$ over a base $B$ of finite type with the following property: 
for any projective klt pair $(Y,D) \in \mathfrak F_{klt}$  there is some $b\in B$ such that $(X_b,\De_b)$ is a terminalisation of $(Y,D)$. 
Let $B_1,\dots ,B_s$ denote the components of $B$, and let $(\mathcal X_i, \De_i)$ denote the component of $(\mathcal X,\De)$ over $B_i$.
Applying Proposition \ref{prop:family} to the family of terminal pairs $(\mathcal X_i, \De_i)$, we may additionally assume that there are projective families $\pi_i^k \colon (\mathcal X_i^k,\Delta_i^k)\longrightarrow B_i$ of terminal and $\Q$-factorial pairs $({(X_i^k)}_b,{(\Delta_i^k)}_b)$ with $K_{{(X_i^k)}_b}+{(\Delta_i^k)}_b$ big and nef, obtained from sequences of flops $(\mathcal X_i, \De_i) \dashrightarrow (\mathcal X_i^k,\Delta_i^k)$, where $k=1,\dots ,r(i)$, such that item (\ref{item:prop:1}) and (\ref{item:prop:2}) of Proposition \ref{prop:family}, as well as  Proposition \ref{prop:raydeformation} with $U=B_i$ hold.
Let 
$$
\psi_i^{k,j}\colon (\mathcal X^{k}_i,\Delta_i^{k})\longrightarrow (\mathcal X^{k,j}_i,\Delta_i^{k,j})
$$
denote the contractions of the finitely many $(K_{\mathcal X_i^k}+\Delta_i^k)$-trivial extremal faces of $\overline{NE}(\mathcal X_i^k/B_i)$, where $j=1,\dots ,s(i,k)$; for simplicity, we include the trivial face in this list and so we may assume  $(\mathcal X^{k,1}_i,\Delta_i^{k,1})= (\mathcal X^{k}_i,\Delta_i^{k})$ and $\psi_i^{k,1}=\id$. 
By item (\ref{item:cor:1}) in Corollary \ref{cor:prop:raydef:F} and Lemma \ref{lem:flops+contr}, the disjoint union of all $\pi_i^{k,j}\colon (\mathcal X^{k,j}_i,\Delta_i^{k,j}) \longrightarrow B_i$ parametrizes all pairs $(X,\Delta)\in \mathfrak F$. 
To conclude, we just note that up to shrinking the $B_i$'s, we can assume by item (\ref{item:cor:2}) in Corollary \ref{cor:prop:raydef:F} that one fibre of $\pi_i^{k,j}\colon \mathcal X^{k,j}_i \longrightarrow B_i$ is $\Q$-factorial, if and only if each fibre is $\Q$-factorial, if and only if $\mathcal X^{k,j}_i$ is $\Q$-factorial.
This proves Theorem \ref{thm:boundednesspairs}. 
\end{proof}

\begin{proof}[Proof of Corollary \ref{cor:nrofmodels:pairs}]	 
Let $n=\dim X$, $c=\operatorname{vol}(X,\Delta)$ and $I$ be the set of coefficients of $\Delta$.  
As we have seen in the proof of Theorem \ref{thm:boundednesspairs}, there is a projective family of terminal pairs $\pi^t\colon  (\mathcal X^t, \De^t) \longrightarrow B$ over a base $B$ of finite type with the following property: 
for any projective klt pair $(Y,D)$ of dimension $n$ with $K_Y+D$ ample, $(K_Y+D)^n=c$ and such that the coefficients of $D$ belong to $I$, there is some $b\in B$ such that $(X^t_b,\De^t_b)$ is a terminalisation of $(Y,D)$. 
Since any constructible function on $B$ is bounded, applying Theorem \ref{thm:stratification:pairs} to this family shows that the number of weak log canonical models of the fibres of $\pi^t$ is bounded by a constant that depends only on $n$, $I$ and $c$.

In order to prove the corollary, it thus suffices to see that the number of weak log canonical models of $(X,\Delta)$ is bounded from above by the number of weak log canonical models of any terminalisation $q:(X^t,\Delta^t) \to (X^c,\Delta^c)$ of the canonical model $\alpha:(X,\De)\dashrightarrow (X^c,\Delta^c)$ of $(X,\Delta)$.
To prove the latter, let $f\colon (X,\Delta)\dashrightarrow (Y,\Gamma)$ be a weak log canonical model of $(X,\Delta)$.
Since $K_Y+\Gamma$ is big and nef, Lemma \ref{lem:flops+contr} implies that $(Y,\Gamma)$ is obtained via a sequence of flops $\xi\colon (X^t,\Delta^t) \dashrightarrow (X^+,\Delta^+)$ followed by a contraction of a $(K_{X^+}+\Delta^+)$-trivial face $\psi\colon (X^+,\Delta^+) \rightarrow (Y,\Gamma)$.    
In particular, $\psi\circ \xi\colon (X^t,\Delta^t) \dashrightarrow (Y,\Gamma)$ is a weak log canonical model of $(X^t,\Delta^t)$.
Moreover, since $f= \psi\circ \xi \circ q^{-1}\circ \alpha$, two weak log canonical models of $(X,\Delta)$ are isomorphic, if and only if the induced weak log canonical models of $(X^t,\De^t)$ are isomorphic.  
Therefore, the number of weak log canonical models of $(X,\Delta)$ is bounded from above by the number of weak log canonical models of $(X^t,\Delta^t)$, as we want.
This concludes the corollary. 
\end{proof}

\begin{proof}[Proof of Corollary \ref{cor:minmodbdd}]
By \cite{HM06,takayama,tsuji}, the set
$$
\{ \operatorname{vol}(X) : X \mbox{ is a smooth projective variety of dimension $n$}\}
$$	
is discrete.
The corollary follows then from Theorem \ref{thm:boundednesspairs} and the fact that terminality is an open condition in flat families. 
\end{proof}

\begin{proof}[Proof of Corollary \ref{cor:finitetopspace}]
To prove that the set of pairs $(X,D)\in \mathfrak F$ is finite up to homeomorphisms consider the family $\pi \colon (\mathcal X,D)\longrightarrow B$ given by Theorem \ref{thm:boundednesspairs}.
By \cite[Th\'eor\'eme 2.2]{verdier}, there is  a Whitney stratification of $\mathcal X$ such that $D$ is a union of strata.
It thus follows from \cite[Th\'eor\'eme 3.3 and 4.14]{verdier} that up to replacing $B$ by the strata of some stratification, we may assume that locally on $B$, $\pi$ is topologically trivial.  
In particular, for any two points $b,b'\in B$ in the same connected component of $B$, there is a homeomorphism between $X_b^{an}$ and $X_{b'}^{an}$ which maps $D_b^{an}$ to $D_{b'}^{an}$, as we want.

The same proof works for the case of minimal models of given dimension and bounded volume considering the family given by Corollary \ref{cor:minmodbdd}.
\end{proof}

\begin{proof}[Proof of Corollary \ref{cor:finitedeftype}]
Since smoothness in families is a Zariski open condition in the base, the corollary follows immediately from Corollary \ref{cor:minmodbdd}.
\end{proof}

\begin{proof}[Proof of Corollary \ref{cor:CY}]
Let $X$ be a projective variety with klt singularities, $K_X\equiv 0$ and let $L$ be a big and nef line bundle.
By \cite[Theorem 1]{kollar-effectivebpf}, $2(n+2)!(n+1)L$ is basepoint free.
By Bertini's theorem, there is a smooth divisor $D\in |2(n+2)!(n+1)L|$, and we put $\Delta:=\frac{1}{2(n+2)!(n+1)}D$.
Then, $(X,\Delta)$ is a klt pair with $K_X+\Delta$ big and nef, and $(K_X+\Delta)^{n}=L^n$.
Since $L^n$ is an integer and the coefficients of $\Delta$ depend only on the dimension of $X$, the corollary follows from Theorem \ref{thm:boundednesspairs}.
\end{proof}

\section{Open Questions}

In view of Theorem \ref{thm:stratification}, it is natural to pose the following question.

\begin{question}
Is the number of marked minimal models an upper semi-continuous function on the base of families of complex projective varieties of general type? 
\end{question}

Recall from Remark \ref{rem:raydeformation} that the number of marked minimal models is not a topological invariant.
In view of Corollary \ref{cor:betti}, it is nonetheless natural to pose the following higher dimensional version of the conjecture of Cascini and Lazi\'c \cite{cascini-lazic}.

\begin{question} \label{question:CL}
Is the number of marked minimal models of a smooth complex projective variety of general type bounded in terms of the underlying topological space?
\end{question}

By Theorem \ref{thm:boundednesspairs}, Question \ref{question:CL} is related to the following question, which by \cite{cascini-tasin} is known to have an affirmative answer in dimension at most three.

\begin{question}
Is the volume of a smooth complex projective variety of general type bounded in terms of the underlying topological space?
\end{question}

\section*{Acknowledgement}  
We are grateful to P.\ Cascini for useful conversations and, in particular, for encouraging us to generalize our results from the case of threefolds to arbitrary dimensions.
We thank V.\ Lazi\'c for many useful comments on an earlier version of this paper, R.\ Svaldi for helpful conversations and V.\ Tosatti for a question which motivated Corollary \ref{cor:CY}.
We also thank the referee for spotting an inaccuracy in a previous version of Theorem \ref{thm:boundednesspairs}.

Parts of the results of this article were conceived when the third author was supported by the DFG Emmy Noether-Nachwuchsgruppe ``Gute Strukturen in der h\"oherdimensionalen birationalen Geometrie''.
The second author is member of the SFB/TR 45 and thanks the Universit\`a Roma Tre for hospitality, where parts of this project were carried out.

\end{document}